\newtheorem{theorem}{Theorem}[section]
\newtheorem{proposition}{Proposition}[section]
\newtheorem{lemma}{Lemma}[section]
\newtheorem{corollary}{Corollary}[section]
\newtheorem{remark}{Remark}[section]
\numberwithin{equation}{section}
\title[A quantitative  Borg-Levinson theorem]{A quantitative  Borg-Levinson theorem for a large class of unbounded potentials}
\author[Mourad Choulli]{Mourad Choulli}
\address{Universit\'e de Lorraine}
\email{mourad.choulli@univ-lorraine.fr}
\begin{document}

\begin{abstract}

We prove a quantitative Borg-Levinson theorem for a large class of unbounded potentials. We give a detailed proof when the dimension of the space is greater than or equal to five. We also indicate the  modifications necessary to cover lower dimensions. In the last section, we briefly show how to extend our result to the anisotropic case.
\end{abstract}

\subjclass[2010]{35R30}

\keywords{Borg-Levinson theorem, unbounded potentials, Dirichlet-to-Neumann map, resolvent estimate.}

\maketitle

\tableofcontents

\section{Introduction}

Let $\Omega$ be a bounded $C^{1,1}$-domain of $\mathbb{R}^n$, $n\ge 5$,  with boundary $\Gamma$.  Let $p=2n/(n+2)$ and $q=2n/(n+4)$. Their respective conjugates are $p'=2n/(n-2)$ and $q'=2n/(n-4)$. That is we have $1/p+1/p'=1$  and $1/q+1/q'=1$.

Define on $H^1(\Omega)\times H^1(\Omega )$ the sesquilinear form $\mathfrak{a}_V$ associated with $V\in L^{n/2}(\Omega,\mathbb{R} )$ by
\[
\mathfrak{a}_V(u,v)=\int_\Omega \left(\nabla u\cdot \nabla\overline{ v}+Vu\overline{v}\right)dx.
\]
We prove (see Appendix \ref{app1}) that $\mathfrak{a}_V$ is bounded and coercive. Therefore, the bounded operator $A_V: H_0^1(\Omega )\rightarrow H^{-1}(\Omega )$ defined by 
\[
\langle A_Vu,v\rangle =\mathfrak{a}_V(u,v),\quad u,v\in H_0^1(\Omega )
\]
is  self-adjoint (with respect to the pivot space $L^2(\Omega)$) and coercive. By applying \cite[Theorem 2.37]{WM}, we deduce that the spectrum of $A_V$, denoted $\sigma(A_V)$, consists of a sequence $(\lambda_V^k)$ satisfying
\[
-\infty< \lambda_V^1\le \lambda_V^2\le \ldots \le \lambda_V^k\le \ldots
\]
and
\[
\lambda_V^k\rightarrow \infty \quad  \mbox{as}\; k\rightarrow \infty .
\]

Moreover, $L^2(\Omega )$ admits an orthonormal basis $(\phi_V^k)$ of eigenfunctions, each $\phi_V^k$ being associated with $\lambda_V^k$. That is, we have $\phi_V^k\in H_0^1(\Omega)$ and, for all $k\ge 1$ and $v\in H_0^1(\Omega)$,
\[
\int_\Omega \left(\nabla \phi_V^k\cdot \nabla\overline{ v}+V\phi_V^k\overline{v}\right)dx=\lambda_V^k\int_\Omega \phi_V^k\overline{v}dx.
\]
In particular, $(-\Delta +V)\phi_V^k=\lambda_V^k\phi_V^k$ in the distributional sense, and since $(\lambda_V^k-V)\phi_V^k\in L^q(\Omega)$, we obtain from the usual  $W^{2,q}$ regularity that $\phi_V^k\in W^{2,q}(\Omega)$.

Let $\gamma_1$ be the bounded operator from $W^{2,q}(\Omega)$ onto $W^{1-1/q,q}(\Gamma)$ given by
\[
\gamma_1w:=\partial_\nu w,\quad w\in C^\infty(\overline{\Omega}),
\]
where $\partial_\nu$ represents the derivative along the unitary exterior normal vector field $\nu$.

For simplicity, we hereafter use the notation
\[
\psi_V^k=\gamma_1 \phi_V^k,\quad k\ge 1,\; V\in L^{n/2}(\Omega,\mathbb{R}).
\]

The following Borg-Levinson type theorem was proved in \cite{Po}.

\begin{theorem}\label{mainthm0}
Let $\ell \ge 1$ be an integer and $V_1,V_2\in L^m(\Omega,\mathbb{R})$, where $m=n/2$ when $n\ge 4$ and $m>n/2$ when $n=3$, satisfy
\[
\lambda_{V_1}^k=\lambda_{V_2}^k,\quad \psi_{V_1}^k= \psi_{V_2}^k,\quad k\ge \ell.
\]
Then $V_1=V_2$.
\end{theorem}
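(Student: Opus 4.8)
The plan is to run the classical two-step scheme for Borg--Levinson type theorems. First one turns the boundary spectral data $(\lambda_{V_j}^k,\psi_{V_j}^k)$ into the knowledge of the Dirichlet-to-Neumann (DtN) map of $-\Delta+V_j$ at a fixed frequency; then one invokes the uniqueness theorem for the inverse boundary value problem for the Schr\"odinger operator with $L^{n/2}$ potential. For $\lambda\notin\sigma(A_{V_j})$, let $u_{f,j}^\lambda\in H^1(\Omega)$ be the unique solution of $(-\Delta+V_j-\lambda)u=0$ in $\Omega$, $u=f$ on $\Gamma$, let $P_{V_j}(\lambda):f\mapsto u_{f,j}^\lambda$ be the Poisson operator, and set $\Lambda_{V_j}(\lambda)f:=\gamma_1 u_{f,j}^\lambda$, its boundedness between the relevant trace spaces being part of the groundwork (in the spirit of the $W^{2,q}$-regularity recalled above). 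Using the resolvent-type identity $P_{V_j}(\lambda)-P_{V_j}(\mu)=(\lambda-\mu)R_{V_j}(\lambda)P_{V_j}(\mu)$, the spectral expansion $R_{V_j}(\lambda)=\sum_k(\lambda_{V_j}^k-\lambda)^{-1}\phi_{V_j}^k\langle\,\cdot\,,\phi_{V_j}^k\rangle_{L^2(\Omega)}$, and Green's formula, which gives $\langle P_{V_j}(\mu)f,\phi_{V_j}^k\rangle_{L^2(\Omega)}=(\lambda_{V_j}^k-\mu)^{-1}\langle f,\psi_{V_j}^k\rangle_{L^2(\Gamma)}$, one obtains
\[
\Lambda_{V_j}(\lambda)-\Lambda_{V_j}(\mu)=(\lambda-\mu)\sum_{k\ge 1}\frac{\psi_{V_j}^k\otimes\psi_{V_j}^k}{(\lambda_{V_j}^k-\lambda)(\lambda_{V_j}^k-\mu)},\qquad \lambda,\mu\notin\sigma(A_{V_j}),
\]
where $(\psi\otimes\psi)g=\langle g,\psi\rangle_{L^2(\Gamma)}\psi$.

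Subtracting this identity for $j=1$ and $j=2$, every term with $k\ge\ell$ cancels by hypothesis, and a partial-fraction computation in $\lambda$ shows that
\[
\Lambda_{V_1}(\lambda)-\Lambda_{V_2}(\lambda)=F(\lambda)+G,\qquad F(\lambda)=\sum_{k<\ell}\frac{\psi_{V_1}^k\otimes\psi_{V_1}^k}{\lambda_{V_1}^k-\lambda}-\sum_{k<\ell}\frac{\psi_{V_2}^k\otimes\psi_{V_2}^k}{\lambda_{V_2}^k-\lambda},
\]
with $G$ independent of $\lambda$ and $F$ an explicit operator-valued rational function of rank at most $2(\ell-1)$, with poles among the finitely many unknown low eigenvalues. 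To kill this discrepancy I would study $\lambda=-t\to-\infty$: since $\Lambda_{V_1}(-t)-\Lambda_{V_2}(-t)=-\gamma_1R_{V_1}(-t)\big[(V_1-V_2)P_{V_2}(-t)\,\cdot\,\big]$, and since the solutions $u_{f,j}^{-t}$ concentrate in an $O(t^{-1/2})$ collar of $\Gamma$ and decay exponentially away from it, the resolvent estimates for $-\Delta+V_j+t$ with $V_j\in L^{n/2}(\Omega)$ combined with dominated convergence for the collar integral $\int_\Omega|V_1-V_2|\,e^{-2\sqrt t\,\mathrm{dist}(x,\Gamma)}\,dx$ give $\Lambda_{V_1}(-t)-\Lambda_{V_2}(-t)\to 0$, indeed faster than any power of $t^{-1}$ (a fixed polynomial rate would also do, together with the rank bound on $F$). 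Since $F(-t)\to 0$ as well, first $G=0$, and then $t\mapsto F(-t)$ is a rational function that decays super-polynomially, hence $F\equiv 0$. Therefore $\Lambda_{V_1}(\lambda_0)=\Lambda_{V_2}(\lambda_0)$ for every $\lambda_0\notin\sigma(A_{V_1})\cup\sigma(A_{V_2})$.

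Now fix such a $\lambda_0$ and set $W_j:=V_j-\lambda_0\in L^{n/2}(\Omega,\mathbb{R})$. Equality of the DtN maps and the Alessandrini identity give
\[
\int_\Omega(W_1-W_2)\,u_1u_2\,dx=0
\]
for all $u_j\in H^1(\Omega)$ with $(-\Delta+W_j)u_j=0$ in $\Omega$. Inserting complex geometric optics solutions $u_j=e^{x\cdot\zeta_j}(1+\psi_j)$ with $\zeta_j\cdot\zeta_j=0$, $\zeta_1+\zeta_2=i\xi$ for a prescribed $\xi\in\mathbb{R}^n$ and $|\zeta_j|\to\infty$ --- whose construction, and the bound $\|\psi_j\|\to 0$ for $W_j\in L^{n/2}$, rest on the Kenig--Ruiz--Sogge uniform Sobolev (Carleman) estimates --- and letting $|\zeta_j|\to\infty$ shows that the Fourier transform of the extension by zero of $W_1-W_2$ vanishes at $\xi$, for every $\xi\in\mathbb{R}^n$. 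Hence $V_1=V_2$ a.e.\ in $\Omega$.

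The main obstacle, and what the whole scheme must be engineered around, is the critical integrability $V_j\in L^{n/2}(\Omega)$: every bounded-potential estimate (the mapping properties of $\Lambda_{V_j}(\lambda)$ and of the resolvents used in Steps 1--2, and the construction and smallness of the CGO correction $\psi_j$ in Step 3) must be replaced by sharp $L^p$--$L^q$ inequalities that are uniform in the spectral and large parameters. It is exactly this borderline case that forces the restriction $n\ge 5$ in the detailed treatment, so that $q=2n/(n+4)>1$ and $W^{2,q}$-regularity of the eigenfunctions is available, and that requires the separate discussion of $n=3,4$.
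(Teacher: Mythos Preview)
First a clarification: the paper does not prove Theorem~\ref{mainthm0}; it is quoted from Pohjola \cite{Po}. What the paper proves is the quantitative version, Theorem~\ref{mainthm1}, via Isozaki's Born--approximation scheme (formula~\eqref{1.22} and Proposition~\ref{promt1}). So there is no ``paper's own proof'' of this statement to compare against, but one can compare your outline with the method actually deployed in \cite{Po} and in Section~3 of the present paper.

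Your reduction to $\Lambda_{V_1}(\lambda)-\Lambda_{V_2}(\lambda)=F(\lambda)+G$ with $F$ a finite sum of simple poles is correct, and $G=0$ follows from any $o(1)$ decay of the difference of DtN maps. The gap is the next step: you need $F\equiv 0$, and you invoke \emph{super-polynomial} decay of $\Lambda_{V_1}(-t)-\Lambda_{V_2}(-t)$. For $L^{n/2}$ potentials this is not available. The boundary--layer heuristic you sketch, $\int_\Omega |V_1-V_2|\,e^{-2\sqrt t\,\mathrm{dist}(x,\Gamma)}\,dx$, gives only a \emph{fixed} power of $t^{-1}$ once you pair $|V_1-V_2|\in L^{n/2}$ against the exponential via H\"older and the coarea formula; dominated convergence alone gives no rate at all. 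This is exactly the phenomenon flagged in Remark~\ref{rem1}: contrary to the bounded case of \cite{CS}, higher--order Taylor expansions do not buy arbitrarily fast decay here. Your parenthetical escape (``a fixed polynomial rate would also do, together with the rank bound on $F$'') does not work either: $F(-t)$ has an asymptotic expansion $\sum_{j\ge 1}c_j t^{-j}$ whose vanishing to order $N$ only yields $N$ moment relations among the $2(\ell-1)$ rank--one pieces, so a rate $O(t^{-\alpha})$ with $\alpha$ independent of $\ell$ cannot force $F\equiv 0$ for large $\ell$.

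The way \cite{Po} (and the present paper for the quantitative statement) circumvents this is to abandon the goal $\Lambda_{V_1}(\lambda_0)=\Lambda_{V_2}(\lambda_0)$ at a fixed $\lambda_0$ altogether. Instead one tests the DtN difference against the explicit plane waves $\varphi_{\lambda_\tau,\omega}$ along $\lambda_\tau=(\tau+i)^2\to\infty$ and uses \eqref{1.22}; the finite--rank discrepancy $F(\lambda_\tau)$, paired with these test functions, is $O(\tau^{-2})$ and drops out of the limit identifying $\widehat{V_1-V_2}(\xi)$, while the ``CGO'' input reduces to the uniform resolvent estimate of Proposition~\ref{prort}. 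For $\ell=1$ your scheme is fine (then $F\equiv 0$ trivially and the fixed--frequency CGO argument for $L^{n/2}$ potentials applies), but for $\ell\ge 2$ you should switch to this high--frequency route.
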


Let $\Omega_0$ be an arbitrary neighborhood of $\Gamma$ in $\overline{\Omega}$ and $\Gamma_0$ be a nonempty open subset of $\Gamma$. Using unique continuation from Cauchy data for elliptic equations, we obtain the following consequence of theorem \ref{mainthm0}.

\begin{corollary}\label{maincor}
Let $\ell \ge 1$ be an integer and $V_1,V_2\in L^m(\Omega,\mathbb{R})$, where $m=n/2$ when $n\ge 4$ and $m>n/2$ when $n=3$, satisfy $V_1{_{|\Omega_0}}\in L^n(\Omega_0)$, $V_1=V_2$ in $\Omega_0$ and
\[
\lambda_{V_1}^k=\lambda_{V_2}^k,\quad \psi_{V_1}^k{_{|\Gamma_0}}= \psi_{V_2}^k{_{|\Gamma_0}},\quad k\ge \ell.
\]
Then $V_1=V_2$.
\end{corollary}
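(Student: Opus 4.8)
The plan is to reduce the corollary to Theorem \ref{mainthm0} by upgrading the partial boundary data $\psi_{V_1}^k{_{|\Gamma_0}}=\psi_{V_2}^k{_{|\Gamma_0}}$ to the full boundary data $\psi_{V_1}^k=\psi_{V_2}^k$ on all of $\Gamma$, for every $k\ge\ell$. The mechanism for this is a unique continuation argument from Cauchy data: for each fixed $k\ge\ell$, set $\mu=\lambda_{V_1}^k=\lambda_{V_2}^k$ and consider the difference $w=\phi_{V_1}^k-\phi_{V_2}^k$ on the region $\Omega_0$ where $V_1=V_2=:W$. Since $\phi_{V_i}^k\in W^{2,q}(\Omega)$ solves $(-\Delta+W)\phi_{V_i}^k=\mu\phi_{V_i}^k$ in $\Omega_0$, the function $w$ solves the elliptic equation $(-\Delta+W-\mu)w=0$ in $\Omega_0$. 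On $\Gamma_0$ we have $w=0$ (both eigenfunctions lie in $H_0^1(\Omega)$, so their Dirichlet traces vanish on all of $\Gamma$) and $\partial_\nu w=\psi_{V_1}^k{_{|\Gamma_0}}-\psi_{V_2}^k{_{|\Gamma_0}}=0$ by hypothesis. Thus $w$ has vanishing Cauchy data on the nonempty open subset $\Gamma_0\subset\partial\Omega_0$.

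Next I would invoke a unique continuation principle for second-order elliptic equations with $L^n$ (or $L^{n/2}$) potential — this is where the hypothesis $V_1{_{|\Omega_0}}\in L^n(\Omega_0)$ is used, since the unique continuation from Cauchy data for $-\Delta+W-\mu$ requires the potential to lie in a suitable $L^p$ class (the $L^{n/2}$ strong unique continuation results of Jerison–Kenig, or the $L^n$ Carleman estimates of Koch–Tataru, apply here). Concretely: $w$ solves $-\Delta w = (\mu-W)w$ in $\Omega_0$ with $(\mu-W)\in L^n(\Omega_0)$ and $w\in W^{2,q}(\Omega_0)\subset H^1(\Omega_0)$, and $w$ has zero Cauchy data on $\Gamma_0$. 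Extending $w$ by zero across $\Gamma_0$ to a slightly larger domain and applying unique continuation from the boundary (or equivalently, reflecting and using interior unique continuation), one concludes $w\equiv 0$ in the connected component of $\Omega_0$ touching $\Gamma_0$; propagating through $\Omega_0$ (which, being a neighborhood of $\Gamma$, connects $\Gamma_0$ to all of $\Gamma$) yields $\partial_\nu w=0$ on the whole of $\Gamma$, i.e. $\psi_{V_1}^k=\psi_{V_2}^k$ on $\Gamma$.

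Having established $\lambda_{V_1}^k=\lambda_{V_2}^k$ and $\psi_{V_1}^k=\psi_{V_2}^k$ for all $k\ge\ell$ — the full hypotheses of Theorem \ref{mainthm0}, whose $L^m$ requirement on $V_1,V_2$ is exactly the one assumed here — I would apply that theorem directly to conclude $V_1=V_2$ on $\Omega$.

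The main obstacle is the unique continuation step: one must have a unique continuation result from Cauchy data that tolerates a potential in $L^n$ rather than $L^\infty$, and one must handle the geometry carefully — $\Gamma_0$ is only a piece of the boundary of $\Omega_0$, so the zero-Cauchy-data set is a relatively open subset of $\partial\Omega_0$, and one needs the connectedness of $\Omega_0$ (or at least of the component meeting $\Gamma_0$) together with the fact that $\Omega_0\supset$ a full collar of $\Gamma$ to propagate the vanishing from $\Gamma_0$ to all of $\Gamma$. If $\Omega_0$ is not connected one would argue component by component, which still works since each component of a neighborhood of $\Gamma$ that meets $\Gamma$ meets it in a relatively open set, but the clean statement presupposes enough regularity ($C^{1,1}$) to make the boundary unique continuation rigorous — this is precisely the role of the standing $C^{1,1}$ assumption on $\Omega$.
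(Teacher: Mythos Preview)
Your proposal is correct and matches the paper's own approach: the paper does not give a detailed proof of this corollary but simply states that it follows from Theorem \ref{mainthm0} ``using unique continuation from Cauchy data for elliptic equations,'' which is exactly the reduction you carry out. Your identification of the role of the hypothesis $V_1{|_{\Omega_0}}\in L^n(\Omega_0)$ (to guarantee the unique continuation step for $-\Delta+W-\mu$) and of the connectedness subtlety in propagating from $\Gamma_0$ to all of $\Gamma$ is accurate and goes beyond what the paper spells out.
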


We aim to establish a quantitative version of Theorem \ref{mainthm0} when $\ell=1$. Before stating  this result precisely, we need to introduce some definitions and notations. Fix $V_0\in  L^{n/2}(\Omega,\mathbb{R})$ nonnegative and non identically equal to zero. Then let
\[
\mathscr{V}=\left\{V\in  L^{n/2}(\Omega,\mathbb{R});\; |V|\le V_0\right\}.
\]

Define for $V_1,V_2\in \mathscr{V}$ 

\begin{align*}
&\mathscr{D} (V_1,V_2)=\sum_{k\ge 1}k^{-2/n}\left[ |\lambda_{V_1}^k-\lambda_{V_2}^k|+\|\psi_{V_2}^k-\psi_{V_1}^k\|_{L^q(\Gamma)}\right],
\\
&\mathscr{D}_+ (V_1,V_2)=\sum_{k\ge 1}k^{-2/n}\left[ |\lambda_{V_1}^k-\lambda_{V_2}^k|+\|\phi_{V_2}^k-\phi_{V_1}^k\|_{W^{2,q}(\Omega)}\right].
\end{align*}

Note that, as the trace map $w\in W^{2,q}(\Omega)\mapsto \gamma_1w\in L^q(\Gamma)$ is bounded, if $\mathscr{D}_+ (V_1,V_2)<\infty$, then $\mathscr{D}(V_1,V_2)<\infty$.

Next, fix $W_0\in L^n(\Omega,\mathbb{R})$ nonnegative and non identically equal to zero and set
\begin{align*}
&\mathscr{W}=\{(V_1,V_2)\in \mathscr{V}\times \mathscr{V};\; |V_1-V_2|\le W_0 \},
\\
&\mathscr{W}_+=\{(V_1,V_2)\in \mathscr{W};\; \mathscr{D}_+ (V_1,V_2)<\infty\}.
\end{align*}

For fixed $t\in (1+1/q,2)$ we set
\[
\sigma:=\frac{12}{2-t},\quad \beta:=\frac{2}{(\sigma+n)(2n+4)+9(n+2)}.
\]

\begin{theorem}\label{mainthm1}
For all $(V_1,V_2)\in \mathscr{W}_+$ we have
\begin{equation}\label{mt1}
\|V_1-V_2\|_{H^{-1}(\Omega)}\le \mathbf{c}\mathscr{D}(V_1,V_2)^\beta,
\end{equation}
where $\mathbf{c}=\mathbf{c}(n,\Omega,V_0,W_0,t)>0$ is a constant.
\end{theorem}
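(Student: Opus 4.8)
The plan is to run the classical chain ``boundary spectral data $\to$ Dirichlet--to--Neumann map $\to$ potential'', but quantitatively and, crucially, in the \emph{many energies} regime: since $\ell=1$, knowing all the $\lambda_{V_i}^k$ and $\psi_{V_i}^k$ is tantamount to knowing the family of boundary maps at \emph{every} energy below the spectrum, and it is the resulting \emph{polynomial} (rather than exponential) dependence on the energy that turns the usual logarithmic Calder\'on stability into a genuine H\"older rate. I would begin with uniform bookkeeping over $\mathscr V$. As in Appendix~\ref{app1}, splitting $\{V_0>M'\}\cup\{V_0\le M'\}$ in the Sobolev estimate of $\int_\Omega V_0|u|^2$ gives $\mathfrak a_V(u,u)\ge\tfrac12\|\nabla u\|_{L^2(\Omega)}^2-M\|u\|_{L^2(\Omega)}^2$ with $M=M(n,\Omega,V_0)$, hence $\lambda_V^1\ge-M$ for all $V\in\mathscr V$; comparing $A_V$ with the Dirichlet Laplacian by the min--max principle then yields two--sided Weyl bounds $ck^{2/n}\le\lambda_V^k\le Ck^{2/n}$ (so $k^{-2/n}\sim(\lambda_V^k)^{-1}$, which is precisely why $\mathscr D$ carries that weight); and the $W^{2,q}$ elliptic estimate combined with $\|V\phi_V^k\|_{L^q(\Omega)}\le\|V\|_{L^{n/2}(\Omega)}\|\phi_V^k\|_{L^2(\Omega)}$ gives $\|\phi_V^k\|_{W^{2,q}(\Omega)}\le C\lambda_V^k$, whence, interpolating with the $W^{t,q}$ scale ($t>1+1/q$) and applying $\gamma_1$, a bound $\|\psi_V^k\|_{L^q(\Gamma)}\le C(\lambda_V^k)^{\theta}$ with $\theta=\theta(t)<1$.

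Next, for $\lambda\le\lambda_0:=-M-1$, which lies below $\sigma(A_V)$ for every $V\in\mathscr V$, introduce the Dirichlet--to--Neumann map $\Lambda_V(\lambda)\colon f\mapsto\gamma_1u$, where $u$ solves $(-\Delta+V-\lambda)u=0$ in $\Omega$, $u_{|\Gamma}=f$. Writing $u$ through a fixed lifting of $f$ and the spectral resolution $(A_V-\lambda)^{-1}=\sum_k(\lambda_V^k-\lambda)^{-1}\langle\,\cdot\,,\phi_V^k\rangle\phi_V^k$ produces a series for $\Lambda_V(\lambda)$ in the data $(\lambda_V^k,\psi_V^k)_k$; here the hypothesis $(V_1,V_2)\in\mathscr W_+$, i.e. $\mathscr D_+(V_1,V_2)<\infty$, is exactly what makes the difference of these series converge in operator norm and identifies it with the genuine boundary object. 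Subtracting the two series, splitting each term as $\tfrac{A_1}{d_1}-\tfrac{A_2}{d_2}=\tfrac{A_1-A_2}{d_1}+A_2\tfrac{d_2-d_1}{d_1d_2}$, rewriting the boundary pairings $\langle f,\psi_V^k\rangle_\Gamma$ by Green's identity to extract the decay, and summing with Step~1 and Weyl's law, I expect to reach
\[
\big|\big\langle(\Lambda_{V_1}(\lambda)-\Lambda_{V_2}(\lambda))f,g\big\rangle_\Gamma\big|\le C\,(1+|\lambda|)^{N_1}\,\mathscr D(V_1,V_2)\,\|f\|_{X(\Gamma)}\|g\|_{X(\Gamma)},\qquad\lambda\le\lambda_0,
\]
with $X(\Gamma)$ a suitable trace space and $N_1=N_1(n,t)$ explicit; the decisive point, compared with the single--energy Calder\'on problem, is that the $\lambda$--dependence is only polynomial.

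Now fix $\xi\in\mathbb R^n$ and a large parameter $\mu$; take $\lambda=-\mu^2$ and $\zeta_1,\zeta_2\in\mathbb C^n$ with $\zeta_j\cdot\zeta_j=\lambda$, $\zeta_1+\zeta_2=i\xi$, $|\mathrm{Re}\,\zeta_j|$ equal to a \emph{fixed} small $\tau$ while $|\mathrm{Im}\,\zeta_j|\sim\mu$ (possible as soon as $\mu\gtrsim|\xi|$), and build complex geometric optics solutions $u_j=e^{\zeta_j\cdot x}(1+r_j)$ of $(-\Delta+V_j-\lambda)u_j=0$ with a critical remainder estimate $\|r_j\|_{*}\lesssim\mu^{-\varepsilon}$ whose gain $\varepsilon>0$ is provided by the large parameter $|\zeta_j|\sim\mu$ (this is where $n\ge5$, the exponents $p,q,t$, and the a priori bounds $|V_j|\le V_0\in L^{n/2}$, $|V_1-V_2|\le W_0\in L^n$ are used to run and to make uniform the Neumann--series construction). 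Since $|e^{\zeta_j\cdot x}|\le e^{\tau D}$ is bounded on $\overline\Omega$ ($D=\sup_{\overline\Omega}|x|$) and $|\zeta_j|^{|\alpha|}\sim\mu^{|\alpha|}$ controls the derivatives in the trace norms, Alessandrini's identity $\int_\Omega(V_1-V_2)u_1u_2\,dx=\langle(\Lambda_{V_1}(\lambda)-\Lambda_{V_2}(\lambda))u_{1|\Gamma},u_{2|\Gamma}\rangle_\Gamma$, together with the previous display ($|\lambda|=\mu^2$) and $\|V_1-V_2\|_{L^p(\Omega)}\le C$, yields $|\widehat{V_1-V_2}(\xi)|\le C(\mu^{N_2}\mathscr D(V_1,V_2)+\mu^{-\varepsilon})$ for $|\xi|\le c\mu$, where $V_1-V_2$ is extended by $0$ outside $\Omega$. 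Estimating $\|\widetilde{V_1-V_2}\|_{H^{-N}(\mathbb R^n)}^2$ with $N>n/2$ by splitting the Fourier integral at $|\xi|=c\mu$ — the low frequencies via this bound, the high frequencies via $|\widehat{V_1-V_2}|\le\|V_1-V_2\|_{L^1(\Omega)}\le C$ and $\int_{|\xi|>c\mu}(1+|\xi|^2)^{-N}d\xi\le C\mu^{n-2N}$ — and then choosing $N$ with $n-2N\le-2\varepsilon$ and optimizing $\mu^{N_2}\mathscr D\sim\mu^{-\varepsilon}$, gives $\|\widetilde{V_1-V_2}\|_{H^{-N}(\mathbb R^n)}\le C\mathscr D(V_1,V_2)^{\varepsilon/(N_2+\varepsilon)}$ (for $\mathscr D$ small; otherwise \eqref{mt1} is trivial, since $\|V_1-V_2\|_{L^2(\Omega)}\le 2|\Omega|^{1/q'}\|V_0\|_{L^{n/2}(\Omega)}<\infty$ because $n/2>2$). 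Interpolating on the Fourier side, $\|g\|_{H^{-1}(\mathbb R^n)}\le\|g\|_{H^{-N}(\mathbb R^n)}^{1/N}\|g\|_{L^2(\mathbb R^n)}^{1-1/N}$ with $g=\widetilde{V_1-V_2}$, and using $\|V_1-V_2\|_{H^{-1}(\Omega)}\le\|\widetilde{V_1-V_2}\|_{H^{-1}(\mathbb R^n)}$ (extension by zero being isometric $H^1_0(\Omega)\hookrightarrow H^1(\mathbb R^n)$), then gives \eqref{mt1} with $\beta=\varepsilon/(N(N_2+\varepsilon))$; tracking the explicit values of $N_1,N_2,\varepsilon$ — all depending on $n$ and on $t$ through $\sigma=12/(2-t)$ — produces the stated $\beta$ and the constant $\mathbf{c}=\mathbf{c}(n,\Omega,V_0,W_0,t)$.

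The main obstacle is the second step: producing a bound for $\Lambda_{V_1}(\lambda)-\Lambda_{V_2}(\lambda)$ that is simultaneously \emph{linear} in $\mathscr D$, only \emph{polynomial} in $|\lambda|$, and \emph{uniform} over $\mathscr V$, while the potentials are merely in $L^{n/2}$ so the eigenfunctions lie only in $W^{2,q}$ and their normal derivatives only in $L^q(\Gamma)$ — this is what forces the delicate $W^{t,q}$ interpolation and the manipulation of the spectral series through Green's identity, and it is there that $\sigma$, hence $\beta$, is born. A secondary difficulty is the construction of the complex geometric optics solutions with a quantitative gain for the critical class $L^{n/2}$ (and $L^n$ for the difference) together with control of their Cauchy data on $\Gamma$.
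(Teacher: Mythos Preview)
Your Step~2 contains a genuine gap. When you subtract the two spectral series for $\Lambda_{V_j}(\lambda)$ at a fixed $\lambda=-\mu^2$ and split term by term, the piece coming from $|\lambda_{V_1}^k-\lambda_{V_2}^k|$ has $L^q(\Gamma)$-norm of order
\[
\frac{|\lambda_{V_1}^k-\lambda_{V_2}^k|}{(\lambda_{V_1}^k+\mu^2)(\lambda_{V_2}^k+\mu^2)}\,|\langle f,\psi_{V_2}^k\rangle|\,\|\psi_{V_2}^k\|_{L^q(\Gamma)}
\;\lesssim\;
|\lambda_{V_1}^k-\lambda_{V_2}^k|\,(\lambda_V^k)^{2\theta-2},
\]
with $\theta=t/2>3/4$ (since $t>1+1/q=(3n+4)/(2n)>3/2$). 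For the high modes $k\gg\mu^n$ the denominators are already $\sim\lambda_V^k$, there is no further gain from $\mu$, and $(\lambda_V^k)^{-(2-2\theta)}\sim k^{-(4-4\theta)/n}$ with $4-4\theta<2$. Hence the tail $\sum_{k}k^{-(4-4\theta)/n}|\lambda_{V_1}^k-\lambda_{V_2}^k|$ cannot be controlled by $\mathscr D(V_1,V_2)=\sum_k k^{-2/n}[\ldots]$, however many powers of $\mu$ you allow; the same obstruction appears in the $\psi$- and $\phi$-difference pieces. Rewriting $\langle f,\psi_V^k\rangle$ by Green's identity does not help: it produces a factor $\lambda_V^k$, not decay. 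The missing idea is to \emph{differentiate once in $\lambda$}, which squares the denominator $(\lambda_V^k-\lambda)$ and restores exactly the weight $k^{-2/n}$.

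This is precisely what the paper does, but along a different route from yours. It works at the complex energy $\lambda_\tau=(\tau+i)^2$ (large \emph{positive} real part), uses Isozaki's Born approximation identity with the plane waves $\varphi_{\lambda,\omega}=e^{i\sqrt\lambda\,\omega\cdot x}$ instead of CGO solutions, and bounds the remainder $\int_\Omega R_V(\lambda_\tau)(V\varphi_{\lambda_\tau,\omega})V\varphi_{\lambda_\tau,-\theta}$ by $C\tau^{-1/2}$ via the Riesz--Thorin interpolated resolvent estimate $\|R_V(\lambda_\tau)\|_{L^{p_\theta}\to L^{p'_\theta}}\le C\tau^{-1+2\theta}$ at $\theta=1/4$. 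The DtN difference at $\lambda_\tau$ is then estimated by writing $u_V(\lambda_\tau)=u_V(\mu_\tau^0)+\int_0^1 u_V^{(1)}(\mu_\tau^s)\,ds$ along a segment from $\mu_\tau^0=-\tau^\sigma+2i\tau$ to $\lambda_\tau$: at the far-left endpoint $\mu_\tau^0$ one has the a~priori bound $\|\Lambda_{V_1}-\Lambda_{V_2}\|\le C\tau^{-3}$ (this is where $|V_1-V_2|\le W_0\in L^n$, the $W^{t,q}$ interpolation, and $\sigma=12/(2-t)$ enter), while the integral of the derivative has the squared denominators that make the spectral series sum to $C\tau^{\sigma+n+2}\mathscr D(V_1,V_2)$. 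Your proposal to keep $|\Re\zeta_j|$ fixed in the CGO construction at $\lambda=-\mu^2$ is also non-standard and you would need to justify the claimed remainder decay $\mu^{-\varepsilon}$; the paper's plane-wave/Born-approximation route sidesteps this entirely.
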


\begin{remark}
{\rm
Using Hölder's inequality, we see that, in the definition of $\mathscr{W}_+$, $\mathscr{D}_+ (V_1,V_2)<\infty$ can be replaced by
\[
\dot{\mathscr{D}}_+ (V_1,V_2)=\sum_{k\ge 1}\left[ |\lambda_{V_1}^k-\lambda_{V_2}^k|+\|\phi_{V_2}^k-\phi_{V_1}^k\|_{W^{2,q}(\Gamma)}\right]^\eta<\infty,
\]
for some $1\le \eta <n/(n-2)$. Therefore, $\mathscr{D}(V_1,V_2)$ in \eqref{mt1} can be replaced by 
\[
\dot{\mathscr{D}} (V_1,V_2)=\sum_{k\ge 1}\left[ |\lambda_{V_1}^k-\lambda_{V_2}^k|+\|\psi_{V_2}^k-\psi_{V_1}^k\|_{L^q(\Gamma)}\right]^\eta.
\]
}
\end{remark}

The proof of Theorem \ref{mainthm1} is inspired by the previous work of the author and Stefanov \cite{CS} concerning the case of bounded potentials. However, we need in the present case more precise spectral estimates in $W^{2,q}(\Omega)$. Some parts of the Hilbertian analysis used in \cite{CS} are no longer valid for dealing with the case of unbounded potentials.

For the sake of clarity, we have intentionally excluded $n\in \{2,3,4\}$ because in this case some modifications are necessary. First, we need to replace $\mathscr{V}$ by
\[
\tilde{\mathscr{V}}=\{V\in L^r(\Omega,\mathbb{R});\; |V|\le \tilde{V}_0\},
\]
where $r >\max(2,n/2)$ and $\tilde{V}_0\in L^r(\Omega)$ are arbitrarily fixed, with $\tilde{V}_0$ nonnegative and non identically equal to zero. With this modification, we must replace $q$ by $\tilde{q}=2r/(2+r)\in (1,2)$ (and then $\tilde{q}'=2r/(r-2)$). 

Theorem \ref{mainthm1} still holds when $n\in \{3,4\}$ with $1+1/\tilde{q}<t<2$, provided we replace $\mathscr{D}(V_1,V_2)$, $\mathscr{D}_+(V_1,V_2)$, $\mathscr{W}$ and $\mathscr{W}_+$ respectively by
\begin{align*}
&\tilde{\mathscr{D}} (V_1,V_2)=\sum_{k\ge 1}k^{-2/n}\left[ |\lambda_{V_1}^k-\lambda_{V_2}^k|+\|\psi_{V_2}^k-\psi_{V_1}^k\|_{L^{\tilde{q}}(\Gamma)}\right],
\\
&\tilde{\mathscr{D}}_+ (V_1,V_2)=\sum_{k\ge 1}k^{-2/n}\left[ |\lambda_{V_1}^k-\lambda_{V_2}^k|+\|\phi_{V_2}^k-\phi_{V_1}^k\|_{W^{2,\tilde{q}}(\Omega)}\right],
\\
&\tilde{\mathscr{W}}=\{(V_1,V_2)\in \tilde{\mathscr{V}}\times \tilde{\mathscr{V}};\; |V_1-V_2|\le W_0 \},
\\
&\tilde{\mathscr{W}}_+=\{(V_1,V_2)\in \tilde{\mathscr{W}};\; \tilde{\mathscr{D}}_+ (V_1,V_2)<\infty\}.
\end{align*}

In the two-dimensional case, additional modifications are required. By using that $H^1(\Omega)$ is continuously embedded in $L^{2r/(r-1)}(\Omega)$, $p$ and $p'$ must be replaced by $\tilde{p}=2r/(r+1)$ and $\tilde{p}'=2r/(r-1)$, respectively. This choice ensures that Lemma \ref{lem3} below remains valid in the two-dimensional case. On the other hand, the definition of $\mathscr{W}$ must be modified in order to ensure the validity of Lemma \ref{lem1.2} with $q$ replaced by $\tilde{q}$. We then replace $\mathscr{W}$ by
\[
\hat{\mathscr{W}}=\{(V_1,V_2)\in \tilde{\mathscr{V}}\times \tilde{\mathscr{V}};\; |V_1-V_2|\le \hat{W}_0 \},
\]
where $\hat{W}_0\in L^{2r}(\Omega)$ is non-negative and not identically equal to zero, and we modify $\mathscr{W}_+$ accordingly. Under these modifications, Theorem \ref{mainthm1} with $1+1/\tilde{q}<t<2$ is also valid in dimension two, provided that $\mathscr{D}(V_1,V_2)$, $\mathscr{D}_+(V_1,V_2)$, $\mathscr{W}$ and $\mathscr{W}_+$ are replaced by $\tilde{\mathscr{D}}(V_1,V_2)$, $\tilde{\mathscr{D}}_+(V_1,V_2)$, $\hat{\mathscr{W}}$ and $\hat{\mathscr{W}}_+$.

In the literature, the sequence $(\lambda_V^k,\psi_V^k)_{k\ge 1}$ (resp. $(\lambda_V^k,\psi_V^k)_{k\ge \ell}$) associated with  $V\in L^{n/2}(\Omega,\mathbb{R})$  is called complete boundary spectral data (resp. incomplete boundary spectral data).

The first uniqueness result for potentials in $L^r$, $r>n/2$, is due to P\"av\"arinta and Serov \cite{PS} in the case of complete boundary spectral data. This result was generalized by Pohjola \cite{Po} as shown by Theorem \ref{mainthm0}. The case of Robin's boundary condition has recently been studied by the author, Metidji and Soccorsi \cite{CMS} for potentials of the same class as in Theorem \ref{mainthm0} in the case of incomplete boundary spectral data, precisely from $((\lambda_V^k)_{k\ge \ell},(\psi_V^k)_{k\ge 1})$. Bellassoued, Kian, Mannoubi and Soccorsi \cite{BKMS} established a result for potentials in $L^{\max(2,3n/5)}$ from the knowledge of asymptotic spectral data. For a precise definition of asymptotic spectral data, we refer to \cite{KKS} where this notion was first introduced. A quantitative version of the uniqueness result in \cite{BKMS} can be found in \cite{KS}. A stability inequality, based on asymptotic spectral data in the case of a Robin's boundary condition, was obtained by the author, Metidji, and Soccorsi in \cite{CMS2}.

The first multidimensional Borg-Levinson type theorem was proved by Nachman, Sylvester and Uhlmann \cite{NSU} by establishing the relationship between the complete boundary spectral data and the Dirichlet to Neumann map. The quantitative version of this result was proved by Alessandrini and Sylvester \cite{AS} by relating the original problem to a hyperbolic Dirichlet to Neumann map (see also \cite{Ch} where the stability inequality is reformulated in an appropriate topology). The generalization of the result in \cite{NSU} to the case of incomplete boundary spectral data is due to Isozaki \cite{Isozaki89, I}. He used ideas borrowed from the Born approximation.

The magnetic case was first studied by Kian in \cite{Ki} in the isotropic case and later generalized to the anisotropic case in \cite{BCDKS}. For other results we refer, without being exhaustive, to \cite{BCY,BCY2,BD,Ch2,Ser,So}.

\section{Preliminaries}

This section presents the preliminary results that will be used in the following section to prove Theorem \ref{mainthm1}.
 First, since $H^1(\Omega)$ is continuously embedded in $L^{p'}(\Omega)$, we define
 \[
 \mathbf{e}=\sup\left\{\|w\|_{L^{p'}(\Omega)};\; u\in H^1(\Omega), \; \|u\|_{H^1(\Omega)}=1\right\}.
 \]
The space $H_0^1(\Omega)$ will be endowed with the norm $\|\nabla \cdot\|_{L^2(\Omega)}$, and we set 
\[
\mathbf{p}=\sup \left\{ \|u\|_{H^1(\Omega )};\; u\in H_0^1(\Omega),\; \|\nabla u\|_{L^2(\Omega )}=1\right\}.
\]

\subsection{Weyl's asymptotic  formula}

We  have the following improvement of \cite[Lemma 5.2]{Po}.

\begin{lemma}\label{lem3}
Let $V\in \mathscr{V}$ and $u\in H_0^1(\Omega)$. Then for any $\epsilon>0$, there exists a constant $\mathbf{c}_\epsilon=\mathbf{c}_\epsilon (n,\Omega,V_0,\epsilon)>0$ such that 
\begin{equation}\label{1.12.0}
\|Vu^2\|_{L^1(\Omega)}\le \epsilon \|\nabla u\|_{L^2(\Omega)}+\mathbf{c}_\epsilon\|u\|_{L^2(\Omega)}.
\end{equation}
\end{lemma}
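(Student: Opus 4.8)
The plan is to exploit that $V \in \mathscr{V}$ means $|V| \le V_0$ with $V_0 \in L^{n/2}(\Omega)$ fixed, so it suffices to prove the inequality with $V_0$ in place of $V$; the key gain over a crude bound is that $V_0$, being merely in $L^{n/2}$ rather than $L^\infty$, must be split into a part that is genuinely small in $L^{n/2}$ and a bounded remainder. Concretely, given $\epsilon>0$, I would choose $\delta>0$ (to be fixed later in terms of $\epsilon$, $n$, $\Omega$) and write $V_0 = V_0^\delta + V_0^{b,\delta}$, where $V_0^\delta = V_0 \mathbf{1}_{\{V_0 > M_\delta\}}$ with $M_\delta$ chosen large enough that $\|V_0^\delta\|_{L^{n/2}(\Omega)} \le \delta$ (possible by dominated convergence, since $V_0 \in L^{n/2}$), and $V_0^{b,\delta} = V_0 \mathbf{1}_{\{V_0 \le M_\delta\}} \in L^\infty(\Omega)$ with $\|V_0^{b,\delta}\|_{L^\infty(\Omega)} \le M_\delta$.

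Next I would estimate the two pieces. For the bounded part, trivially
\[
\|V_0^{b,\delta} u^2\|_{L^1(\Omega)} \le M_\delta \|u\|_{L^2(\Omega)}^2.
\]
For the small part, Hölder's inequality with the conjugate exponents $n/2$ and $p'/2 = n/(n-2)$ gives
\[
\|V_0^\delta u^2\|_{L^1(\Omega)} \le \|V_0^\delta\|_{L^{n/2}(\Omega)} \|u\|_{L^{p'}(\Omega)}^2 \le \delta\, \mathbf{e}^2 \|u\|_{H^1(\Omega)}^2 \le \delta\, \mathbf{e}^2 \mathbf{p}^2 \|\nabla u\|_{L^2(\Omega)}^2,
\]
using the Sobolev embedding constant $\mathbf{e}$ and the Poincaré constant $\mathbf{p}$ introduced just before the lemma (here $u \in H_0^1(\Omega)$). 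Combining, $\|V u^2\|_{L^1(\Omega)} \le \delta\,\mathbf{e}^2\mathbf{p}^2 \|\nabla u\|_{L^2(\Omega)}^2 + M_\delta \|u\|_{L^2(\Omega)}^2$.

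The last adjustment is cosmetic: the statement has $\|\nabla u\|_{L^2(\Omega)}$ to the first power on the right, not squared, so I would apply Young's inequality $ab \le \tfrac12 a^2 + \tfrac12 b^2$ to the quantity $\|\nabla u\|_{L^2(\Omega)}^2$, or rather note that for the bound to have the stated homogeneity one writes $\delta\,\mathbf{e}^2\mathbf{p}^2\|\nabla u\|_{L^2}^2 \le \epsilon \|\nabla u\|_{L^2} + C(\epsilon,\delta)\|\nabla u\|_{L^2}^0$... — more carefully, since the inequality only needs to hold for all $u$, I split into the regime $\|\nabla u\|_{L^2(\Omega)} \le 1$ (where $\|\nabla u\|^2 \le \|\nabla u\|$) and $\|\nabla u\|_{L^2(\Omega)} > 1$ handled by absorbing a further power into $\|u\|_{L^2}$ via Poincaré, or simply replace the squared gradient term by $\|\nabla u\|_{L^2(\Omega)}$ using $t^2 \le \epsilon' t + C_{\epsilon'}$ is false for large $t$ — so instead I would keep the cleaner route: use $\|u\|_{L^{p'}}^2 \le \|u\|_{L^{p'}} \cdot \mathbf{e}\|u\|_{H^1}$ and interpolate $\|u\|_{L^{p'}}$ between $L^2$ and $H^1$, giving for any $\theta$ a bound $\|Vu^2\|_{L^1} \lesssim \|u\|_{H^1}^{1+\theta}\|u\|_{L^2}^{1-\theta}$, then Young. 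I expect the only genuine subtlety to be this bookkeeping of exponents needed to land exactly on the first-power gradient norm in \eqref{1.12.0}; the conceptual core — the $L^{n/2}$ splitting plus Sobolev embedding — is routine.
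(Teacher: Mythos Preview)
Your core argument is correct and is essentially the same as the paper's: split $V_0$ into a piece with small $L^{n/2}$ norm and a bounded remainder, then apply H\"older/Sobolev to the first and the trivial $L^\infty$ bound to the second. The paper uses density of $C_0^\infty(\Omega)$ in $L^{n/2}(\Omega)$ to produce the splitting, while you use the truncation $V_0\mathbf{1}_{\{V_0>M\}}$; these are equivalent devices. Both routes land on
\[
\|Vu^2\|_{L^1(\Omega)}\le \epsilon\,\|\nabla u\|_{L^2(\Omega)}^{2}+\mathbf{c}_\epsilon\|u\|_{L^2(\Omega)}^{2}.
\]

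Your difficulty at the end is not a flaw in your reasoning; it is a typo in the stated inequality. The lemma should read with \emph{squared} norms on the right-hand side. Indeed, the paper's own proof terminates precisely at the squared inequality (1.12.3) and says ``from which we deduce the expected inequality,'' and every subsequent use of \eqref{1.12.0} in the paper (in the proofs of Proposition~\ref{pro1}, Lemma~\ref{lem4}, and Lemma~\ref{lem4.1}) treats it as the squared version. The first-power version as literally written is in fact false: replacing $u$ by $\lambda u$ for a fixed nonzero $u$ with $\int V|u|^2\neq 0$ makes the left side scale like $\lambda^2$ while the right side scales like $\lambda$. So you may stop at the squared estimate; no further ``bookkeeping of exponents'' is needed.
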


\begin{proof}
By applying H\"older's inequality, we obtain
\begin{align*}
\|Vu^2\|_{L^1(\Omega)}\le \|V_0u^2\|_{L^1(\Omega)} &\le \|V_0u\|_{L^p(\Omega)}\|u\|_{L^{p'}(\Omega)}
\\
&\le \mathbf{p}\mathbf{e}\|V_0u\|_{L^p(\Omega)}\|\nabla u\|_{L^2(\Omega)}.
\end{align*}
Then, we have for all $\epsilon >0$ 
\begin{equation}\label{1.12.1}
\|Vu^2\|_{L^1(\Omega)}\le 2\mathbf{p}\mathbf{e}\epsilon\|\nabla u\|_{L^2(\Omega)}^2+2\mathbf{p}\mathbf{e}\epsilon^{-1}\|V_0u\|_{L^p(\Omega)}^2.
\end{equation}
On the other hand, there exists a constant $\mathbf{c_0}=\mathbf{c}_0(n,\Omega)>0$ so that for all $W_0\in C_0^\infty (\Omega)$
\begin{align*}
\|V_0u\|_{L^p(\Omega)}^2&\le 2\|W_0u\|_{L^p(\Omega)}^2+2\|(V_0-W_0)u\|_{L^p(\Omega)}^2
\\
&\le 2\|W_0\|_{L^\infty (\Omega)}^2\|u\|_{L^p(\Omega)}^2+2\|(V_0-W_0)\|_{L^{n/2}(\Omega)}^2\|u\|_{L^{p'}(\Omega)}^2
\\
&\le \mathbf{c_0}\|W_0\|_{L^\infty (\Omega)}^2\|u\|_{L^2(\Omega)}^2+\mathbf{c_0}\|(V_0-W_0)\|_{L^{n/2}(\Omega)}^2\|u\|_{L^{p'}(\Omega)}^2
\\
&\le \mathbf{c_0}\|W_0\|_{L^\infty (\Omega)}^2\|u\|_{L^2(\Omega)}^2+\mathbf{c_0}\|(V_0-W_0)\|_{L^{n/2}(\Omega)}^2\|\nabla u\|_{L^2(\Omega)}^2.
\end{align*}
As $C_0^\infty (\Omega)$ is dense in $L^{n/2}(\Omega)$, we choose $W_0$ in such a way to obtain
\begin{equation}\label{1.12.2}
\|V_0u\|_{L^p(\Omega)}^2\le \mathbf{c_0}\|W_0\|_{L^\infty (\Omega)}^2\|u\|_{L^2(\Omega)}^2+\mathbf{c_0}\epsilon^2\|\nabla u\|_{L^2(\Omega)}^2.
\end{equation}
This and \eqref{1.12.1} imply
\begin{equation}\label{1.12.3}
\|Vu^2\|_{L^1(\Omega)}\le 2(\mathbf{c}_0+1)\mathbf{p}\mathbf{e}\epsilon \|\nabla u\|_{L^2(\Omega)}^2+2\mathbf{c}_0\mathbf{p}\mathbf{e}\epsilon^{-1}\|W_0\|_{L^\infty (\Omega)}^2\|u\|_{L^2(\Omega)}^2,
\end{equation}
from which we deduce the expected inequality.
\end{proof}

\begin{proposition}\label{pro1}
There exist two constants $\mathfrak{c}_0=\mathfrak{c}_0(n,\Omega)\ge 1$ and $\mathfrak{c}_1=\mathfrak{c}_1(n,\Omega,V_0)>0$ such that for all $V\in \mathscr{V}$ we have
\begin{equation}\label{1.6}
\mathfrak{c}_0^{-1}k^{2/n}-\mathfrak{c}_1\le  \lambda_V^k \le \mathfrak{c}_0k^{2/n}+\mathfrak{c}_1,\quad k\ge 1.
\end{equation}
\end{proposition}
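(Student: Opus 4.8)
The plan is to combine the variational (min--max) characterisation of the eigenvalues with the form bound of Lemma \ref{lem3} and the classical Weyl asymptotics for the Dirichlet Laplacian. Since $H_0^1(\Omega)$ embeds compactly in $L^2(\Omega)$, the operator $A_V$ has compact resolvent, so the min--max principle yields, for every $V\in\mathscr{V}$,
\[
\lambda_V^k=\min_{\substack{E\subseteq H_0^1(\Omega)\\ \dim E=k}}\ \max_{u\in E\setminus\{0\}}\frac{\mathfrak{a}_V(u,u)}{\|u\|_{L^2(\Omega)}^2},\qquad k\ge 1 .
\]
I would next use Lemma \ref{lem3}, in the squared form that actually comes out of its proof (see \eqref{1.12.3}), with the free parameter chosen so small that the coefficient of $\|\nabla u\|_{L^2(\Omega)}^2$ equals $1/2$: there is a constant $\mathbf{c}_\ast=\mathbf{c}_\ast(n,\Omega,V_0)>0$ so that, for all $V\in\mathscr{V}$ and all $u\in H_0^1(\Omega)$,
\[
\Bigl|\int_\Omega Vu^2\,dx\Bigr|\le \|V_0u^2\|_{L^1(\Omega)}\le \tfrac12\|\nabla u\|_{L^2(\Omega)}^2+\mathbf{c}_\ast\|u\|_{L^2(\Omega)}^2 .
\]

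From this, for all such $u$,
\[
\tfrac12\|\nabla u\|_{L^2(\Omega)}^2-\mathbf{c}_\ast\|u\|_{L^2(\Omega)}^2\le \mathfrak{a}_V(u,u)\le \tfrac32\|\nabla u\|_{L^2(\Omega)}^2+\mathbf{c}_\ast\|u\|_{L^2(\Omega)}^2 .
\]
Inserting these two inequalities into the min--max formula and writing $\mu_k=\lambda_0^k$ for the $k$-th eigenvalue of the Dirichlet Laplacian on $\Omega$ (whose Rayleigh quotient is $\|\nabla u\|_{L^2(\Omega)}^2/\|u\|_{L^2(\Omega)}^2$), I obtain the uniform-in-$V$ comparison
\[
\tfrac12\mu_k-\mathbf{c}_\ast\le \lambda_V^k\le \tfrac32\mu_k+\mathbf{c}_\ast,\qquad k\ge 1 .
\]
It then remains to invoke the classical two-sided Weyl bound $\mathfrak{d}_0^{-1}k^{2/n}-\mathfrak{d}_1\le \mu_k\le \mathfrak{d}_0 k^{2/n}+\mathfrak{d}_1$ for suitable $\mathfrak{d}_0\ge 1$ and $\mathfrak{d}_1>0$ depending only on $n$ and $\Omega$ (this follows from Weyl's formula $N(\lambda)=c_n|\Omega|\lambda^{n/2}+o(\lambda^{n/2})$, or more elementarily from Dirichlet--Neumann bracketing against cubes inscribed in and circumscribing $\Omega$). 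Taking $\mathfrak{c}_0=\max(2\mathfrak{d}_0,1)$ and $\mathfrak{c}_1=\tfrac32\mathfrak{d}_1+\mathbf{c}_\ast$ then gives \eqref{1.6}.

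The argument is essentially routine; the only point that needs a little care is the uniformity in $V$ and the dependence of the constants. This is precisely what Lemma \ref{lem3} delivers: the relative form bound of $V$ with respect to $-\Delta$ can be taken equal to $1/2$ with a remainder constant $\mathbf{c}_\ast$ depending on $V_0$ (through the cut-off $W_0$ in \eqref{1.12.3}) but not on the particular $V\in\mathscr{V}$. Consequently the multiplicative constant $\mathfrak{c}_0$ sees only $n$ and $\Omega$, while all the $V_0$-dependence is confined to the additive constant $\mathfrak{c}_1$, exactly as stated.
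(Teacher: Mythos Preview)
Your proof is correct and follows essentially the same approach as the paper: a relative form bound from Lemma~\ref{lem3} (in its squared incarnation \eqref{1.12.3}) combined with the min--max principle yields a uniform two-sided comparison of $\lambda_V^k$ with the Dirichlet Laplacian eigenvalues $\mu_k$, after which the classical Weyl asymptotics finish the job. The only cosmetic differences are your choice of constants ($3/2$ versus the paper's $2$ in the upper bound) and your explicit bookkeeping of $\mathfrak{c}_0,\mathfrak{c}_1$ at the end.
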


\begin{proof}
In this proof, $\mathfrak{c}_1=\mathfrak{c}_1(n,\Omega,V_0)>0$ is a generic constant. Let $V\in \mathscr{V}$. By applying \eqref{1.12.0} with $\epsilon=1$, we find
\begin{equation}\label{1.12.4}
\mathfrak{a}_V(u,u)\le 2\|\nabla u\|_{L^2(\Omega)}+\mathfrak{c}_1\|u\|_{L^2(\Omega)},\quad u\in H_0^1(\Omega).
\end{equation}
On the other hand, we have
\[
\mathfrak{a}_V(u,u)\ge \mathfrak{a}_{-V_0}(u,u)= \|\nabla u\|_{L^2(\Omega)}-\|V_0u^2\|_{L^1(\Omega)},\quad u\in H_0^1(\Omega).
\]
By applying again \eqref{1.12.0} with $\epsilon=1/2$, we get
\[
\mathfrak{a}_V(u,u)\ge \frac{1}{2}\|\nabla u\|_{L^2(\Omega)}-\mathfrak{c}_1\|u\|_{L^2(\Omega)},\quad u\in H_0^1(\Omega).
\]
Let $(\mu_k)$ denotes  the non decreasing sequence of eigenvalues of $\mathfrak{a}_0$ (that is $\mathfrak{a}_V$ with $V=0$). Then the min-max principle yields
\[
\frac{\mu_k}{2}-\mathfrak{c}_1\le \lambda_V^k \le 2\mu_k +\mathfrak{c}_1,\quad k\ge 1.
\]
We complete the proof using Weyl's classical asymptotic formula saying that $\mu_k\sim k^{2/n}$ as $k$ tends to $\infty$.
\end{proof}

\subsection{Dirichlet to Neumann map}

Let $\rho (A_V)=\mathbb{C}\setminus\sigma(A_V)$, usually called the resolvent set of  $A_V$.  We consider the  BVP
\begin{equation}\label{1.7.e}
\left\{
\begin{array}{ll}
(-\Delta +V- \lambda)u=0\quad &\mbox{in}\; \Omega,
\\
u=\varphi &\mbox{on}\; \Gamma .
\end{array}
\right.
\end{equation}

Let $V\in L^{n/2}(\Omega,\mathbb{R} )$ and $\lambda \in \rho (A_V)$. It follows from Theorem \ref{thm2} in Appendix \ref{app2} that, for all $\varphi\in W^{2-1/p,p}(\Gamma )$, the BVP \eqref{1.7.e} admits a unique solution $u_V(\lambda)(\varphi )\in W^{2,p}(\Omega )$. Furthermore, we have
\begin{equation}\label{1.8.e}
\|u_V(\lambda)(\varphi)\|_{W^{2,p}(\Omega )}\le \varkappa\|\varphi\|_{W^{2-1/p,p}(\Gamma )},
\end{equation}
where $\varkappa=\varkappa(n,\Omega,V,\lambda)>0$ is a constant.

Define the family of Dirichlet to Neumann maps $(\Lambda_V(\lambda ))_{\lambda \in \rho (A_V)}$ associated with $V\in L^{n/2}(\Omega,\mathbb{R} )$ as follows
\[
\Lambda_V(\lambda ): \varphi\in W^{2-1/p,p}(\Gamma ) \mapsto \gamma_1 u_V(\lambda)(\varphi) \in W^{1-1/p,p}(\Gamma ).
\]
By the continuity of the trace map $w\in W^{2,p}(\Omega)\mapsto \gamma_1w\in W^{1-1/p,p}(\Gamma)$, it follows from \eqref{1.8.e} that  the mapping $\Lambda_V(\lambda )$ defines a bounded operator.

\subsection{Analyticity of solutions}

In this subsection, $\mathbf{c}=\mathbf{c}(n,\Omega,V_0)>0$ denotes a generic constant. Let $V\in \mathscr{V}$ and $k\ge 1$. Since 
\[
\|\nabla \phi_V^k\|_{L^2(\Omega)}^2\le |\lambda_V^k|+\|V|\phi_V^k|^2\|_{L^1(\Omega)},
\]
\eqref{1.12.0} with $\epsilon=1/2$ yields
\[
\|\nabla \phi_V^k\|_{L^2(\Omega)}\le \mathbf{c}(1+|\lambda_V^k|)^{1/2}
\]
and then
\begin{equation}\label{1.33.0}
\|\phi_V^k\|_{L^{p'}(\Omega)}\le \mathbf{c}(1+|\lambda_V^k|)^{1/2}.
\end{equation}
In consequence, we have
\begin{equation}\label{1.33.1}
\|V\phi_V^k\|_{L^p(\Omega)}\le \mathbf{c}(1+|\lambda_V^k|)^{1/2}.
\end{equation}

By using that $-\Delta \phi_V^k=(\lambda_V^k-V)\phi_V^k\in L^p(\Omega)\subset L^q(\Omega)$ (with continuous embedding), we get $\phi_k\in W^{2,q}(\Omega)$. In light of $W^{2,q}$ a priori estimate (e.g. \cite[Theorem 2.3.3.6]{Gr}), \eqref{1.33.1} yields
\begin{equation}\label{1.33}
\|\phi_k\|_{W^{2,q}(\Omega)}\le \mathbf{c}(1+|\lambda_V^k|).
\end{equation}

Let $\lambda \in \rho(A_V)$, $\varphi\in W^{2-1/p,p}(\Gamma)$ and $u:=u_V(\lambda)(\varphi)$. Then
\[
0=\int_\Omega \left(\nabla u\cdot \nabla \overline{\phi_V^k} + (V-\lambda) u\overline{\phi_V^k}\right)dx,
\]
where we used that $\nabla u\in L^2(\Omega,\mathbb{C}^n)$ and $u\in L^{p'}(\Omega)$.

Let $(\cdot ,\cdot)$ denotes the inner product of $L^2(\Omega)$. By performing an integration by parts, we derive from the preceding identity
\begin{equation}\label{1.34}
(u,\phi_V^k)=-\frac{\langle \varphi, \psi_V^k\rangle}{\lambda_V^k-\lambda}.
\end{equation}
Here and henceforth,
\[
\langle \varphi, \psi_V^k\rangle :=\int_\Gamma \varphi\overline{\psi_V^k}d\sigma.
\]
Inequality \eqref{1.34} then yields
\[
u_V(\lambda)(\varphi)=-\sum_{k\ge 1}\frac{\langle \varphi, \psi_V^k\rangle}{\lambda_V^k-\lambda}\phi_k.
\]
In particular, we have
\begin{equation}\label{1.35}
\sum_{k\ge 1}\left|\frac{\langle \varphi, \psi_V^k\rangle}{\lambda_V^k-\lambda}\right|^2=\|u_V(\lambda)(g)\|_{L^2(\Omega)}^2.
\end{equation}

Let $\lambda_0\in \rho(A_V)$ and $d=\mathrm{dist}(\lambda_0,\sigma(A_V))$. Since 
\[
|\lambda_k-\lambda|\ge |\lambda_k-\lambda_0|-d/2,\quad \lambda\in B(\lambda_0,d/2),
\]
and $|\lambda_k-\lambda_0|-d/2\sim |\lambda_k-\lambda_0|$ when $k$ goes to $\infty$, by using \eqref{1.35} we derive that the series $\sum_{k\ge 1}\frac{\langle \varphi, \psi_V^k\rangle}{\lambda_V^k-\lambda}\phi_k$ converges in $L^2(\Omega)$ uniformly in $B(\lambda_0,d/2)$ and therefore in any compact subset of $\rho(A_V)$. Weierstrass's theorem then allows us to conclude 
that the mapping
\[
\lambda \in \rho(A_V)\mapsto u_V(\lambda)(\varphi)\in L^2(\Omega)
\]
is holomorphic and 
\begin{equation}\label{1.36}
 u_V^{(j)}(\lambda)(\varphi)= -j!\sum_{k\ge 1}\frac{\langle \varphi, \psi_V^k\rangle}{(\lambda_V^k-\lambda)^{j+1}}\phi_V^k,\quad j\ge 0,
\end{equation}
where we used the notation
\[
w^{(j)}:=\frac{d^j}{d\lambda^j}w,
\]
which we will keep from now on. 

In fact, we have $\lambda \in \rho(A_V)\mapsto u_V(\lambda)(\varphi)\in W^{2,p}(\Omega)$ is holomorphic as shown by the following proposition.

\begin{proposition}\label{prohol}
For all $V\in \mathscr{V}$,  $\varphi\in W^{2-1/p,p}(\Gamma)$, $\lambda \in \rho(A_V)\mapsto u_V(\lambda)(\varphi)\in W^{2,p}(\Omega)$ is holomorphic and $u^{(1)}(\lambda):=u_V^{(1)}(\lambda)(\varphi)$ is the solution of the BVP
\begin{equation}\label{holo1}
(-\Delta +V-\lambda )u^{(1)}(\lambda)=u_V(\lambda)(\varphi) \; \mathrm{in}\;  \Omega,\quad u=0\; \mathrm{on}\; \Gamma,
\end{equation}
\end{proposition}

\begin{proof}
Let $V\in \mathscr{V}$, $\lambda \in \rho(A_V)$ and $\mu\in B(0,\mathrm{dist}(\lambda,\sigma(A_V)/2))$. From Theorem \ref{thm2} and its proof, the constant $\mathbf{c}_{\lambda+\mu}$ in \eqref{1.8} can be chosen such that 
\[
\mathbf{c}_\lambda:=\sup_{\mu\in B(0,\mathrm{dist}(\lambda,\sigma(A_V)/2)}\mathbf{c}_{\lambda+\mu}<\infty .
\]
Since $v=u(\lambda+\mu)(\varphi)-u(\lambda)(\varphi)$ is the solution of the BVP
\[
(-\Delta +V-(\lambda +\mu))v=\mu u(\lambda)(\varphi) \; \mathrm{in}\;  \Omega,\quad v=0\; \mathrm{on}\; \Gamma,
\]
we obtain from \eqref{1.8}
\begin{equation}\label{holo2}
\|u(\lambda+\mu)(\varphi)-u(\lambda)(\varphi)\|_{W^{2,p}(\Omega)}\le \mathbf{c}_\lambda |\mu|\|u(\lambda)(\varphi)\|_{L^p(\Omega)}.
\end{equation}
Hence, $z \in \rho(A_V)\mapsto u_V(z)(\varphi)\in W^{2,p}(\Omega)$ is continuous.

Next, let $u^{(1)}(\lambda)$ be the solution of \eqref{holo1} and set
\[
w:=u(\lambda+\mu)(\varphi)-u(\lambda)(\varphi)-\mu u^{(1)}(\lambda).
\]
We verify that $w$ is the solution of the BVP
\[
(-\Delta +V-\lambda )w=\mu [u(\lambda+\mu)(\varphi)-u(\lambda)(\varphi)] \; \mathrm{in}\;  \Omega,\quad w=0\; \mathrm{on}\; \Gamma,
\]
Applying \eqref{1.8} and then \eqref{holo2}, we obtain
\[
\|u(\lambda+\mu)(\varphi)-u(\lambda)(\varphi)-\mu u^{(1)}(\lambda)\|_{W^{2,p}(\Omega)}\le \mathbf{c}_\lambda^2 |\mu|^2\|u(\lambda)(\varphi)\|_{L^p(\Omega)}.
\]
This inequality leads to the expect result.
\end{proof}

Since $W^{2,p}(\Omega)$ is continuously embedded in $W^{2,p}(\Omega)$, the map $\lambda \in \rho(A_V)\mapsto u_V(\lambda)(\varphi)\in W^{2,q}(\Omega)$ is also holomorphic for all $\varphi\in W^{2-1/p,p}(\Gamma)$. This observation will be used in the following.
 
Let $V\in \mathscr{V}$,  $\varphi\in W^{2-1/p,p}(\Gamma)$ and $\lambda \in \rho(A_V)$. By induction in $j\ge 1$, we derive from \eqref{holo1}
\begin{equation}\label{d}
(-\Delta +V-\lambda)u_V^{(j)}(\lambda)(\varphi) =ju_V^{(j-1)}(\lambda)(\varphi)\; \mathrm{in}\;\Omega,\quad u^{(j)}(\lambda)=0\; \mathrm{on}\; \Gamma.
\end{equation}

\subsection{Resolvent estimates}

Recall that the resolvent associated with the operator $A_V$ is defined as follows
\[
R_V(\lambda )=(A_V-\lambda )^{-1}: H^{-1}(\Omega) \rightarrow H_0^1(\Omega), \quad \lambda \in \rho (A_V).
\]

For $z\in \mathbb{C}$, $\Re z$ and $\Im z$ will respectively denote the real and imaginary parts of $z$. Let 
\[
\Sigma_0=\{\lambda \in \mathbb{C};\; |\Re \lambda|\ge 1 ,\; |\Im \lambda| \ge 1,\; |\Re \lambda| |\Im \lambda|^{-1}\ge 1/2\}. 
\]

In all of this subsection, $\mathbf{c}=\mathbf{c}(n,\Omega,V_0)>0$ will denote a generic constant.

\begin{lemma}\label{lem4}
Let $V\in  \mathscr{V}$, $\lambda \in \Sigma_0$ and $f\in L^2(\Omega)$. Then
\begin{align}
&\|R_V(\lambda)f\|_{L^2(\Omega)}\le|\Im \lambda|^{-1}\|f\|_{L^2(\Omega)}, \label{1.13}
\\
&\|R_V(\lambda)f\|_{H_0^1(\Omega)}\le \mathbf{c} |\Re \lambda|^{1/2}|\Im \lambda|^{-1}\|f\|_{L^2(\Omega)},\label{1.14.0}
\\
&\|R_V(\lambda)f\|_{H_0^1(\Omega)}\le \mathbf{c}|\Re \lambda||\Im \lambda|^{-1}\|f\|_{L^p(\Omega)}.\label{1.14}
\end{align}
\end{lemma}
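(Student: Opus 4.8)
\textbf{Proof plan for Lemma \ref{lem4}.}

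The plan is to treat the three estimates in order, using the variational characterization of $R_V(\lambda)$ together with the spectral decomposition in the orthonormal basis $(\phi_V^k)$, and then a duality/interpolation argument for the $L^p$ version. Write $u=R_V(\lambda)f$, so that $(A_V-\lambda)u=f$, i.e. $\mathfrak{a}_V(u,v)-\lambda(u,v)=(f,v)$ for all $v\in H_0^1(\Omega)$. For \eqref{1.13}, I would expand $f=\sum_k f_k\phi_V^k$ and $u=\sum_k (\lambda_V^k-\lambda)^{-1}f_k\phi_V^k$; then $\|u\|_{L^2(\Omega)}^2=\sum_k |\lambda_V^k-\lambda|^{-2}|f_k|^2$, and since $|\lambda_V^k-\lambda|\ge |\Im\lambda|$ for every real $\lambda_V^k$, Parseval gives $\|u\|_{L^2(\Omega)}\le |\Im\lambda|^{-1}\|f\|_{L^2(\Omega)}$ immediately.

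For \eqref{1.14.0}, I would test the equation against $v=u$ itself and take real and imaginary parts. Taking the real part gives $\mathfrak{a}_V(u,u)-\Re\lambda\,\|u\|_{L^2(\Omega)}^2=\Re(f,u)$, and by the coercivity lower bound established in the proof of Proposition \ref{pro1}, namely $\mathfrak{a}_V(u,u)\ge \tfrac12\|\nabla u\|_{L^2(\Omega)}^2-\mathfrak{c}_1\|u\|_{L^2(\Omega)}^2$, one obtains
\[
\tfrac12\|\nabla u\|_{L^2(\Omega)}^2\le \Re(f,u)+(\Re\lambda+\mathfrak{c}_1)\|u\|_{L^2(\Omega)}^2.
\]
Now bound $|\Re(f,u)|\le \|f\|_{L^2(\Omega)}\|u\|_{L^2(\Omega)}\le |\Im\lambda|^{-1}\|f\|_{L^2(\Omega)}^2$ using \eqref{1.13}, and bound the last term by $(\Re\lambda+\mathfrak{c}_1)|\Im\lambda|^{-2}\|f\|_{L^2(\Omega)}^2$; since $\lambda\in\Sigma_0$ forces $|\Re\lambda|\ge 1$, $|\Im\lambda|\ge 1$ and $|\Re\lambda||\Im\lambda|^{-1}\ge 1/2$, these two terms are each $\le \mathbf{c}|\Re\lambda||\Im\lambda|^{-2}\|f\|_{L^2(\Omega)}^2$, whence $\|\nabla u\|_{L^2(\Omega)}\le \mathbf{c}|\Re\lambda|^{1/2}|\Im\lambda|^{-1}\|f\|_{L^2(\Omega)}$, and \eqref{1.14.0} follows after converting $\|\nabla u\|_{L^2(\Omega)}$ to the $H_0^1(\Omega)$ norm via $\mathbf{p}$.

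For the $L^p$ estimate \eqref{1.14}, the idea is to use duality against the $H_0^1\hookrightarrow L^{p'}$ embedding. Given $f\in L^p(\Omega)$, for any $g\in L^2(\Omega)$ one has, writing $w=R_V(\overline\lambda)g$ (note $\sigma(A_V)\subset\mathbb{R}$, so $\overline\lambda\in\rho(A_V)$ too and $\overline\lambda\in\Sigma_0$), the identity $(R_V(\lambda)f,g)=(f,R_V(\overline\lambda)g)=(f,w)$; hence $|(u,g)|\le \|f\|_{L^p(\Omega)}\|w\|_{L^{p'}(\Omega)}\le \mathbf{e}\,\mathbf{p}\,\|f\|_{L^p(\Omega)}\|\nabla w\|_{L^2(\Omega)}\le \mathbf{c}|\Re\lambda|^{1/2}|\Im\lambda|^{-1}\|f\|_{L^p(\Omega)}\|g\|_{L^2(\Omega)}$ by \eqref{1.14.0}. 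Taking the supremum over $\|g\|_{L^2(\Omega)}=1$ gives $\|u\|_{L^2(\Omega)}\le \mathbf{c}|\Re\lambda|^{1/2}|\Im\lambda|^{-1}\|f\|_{L^p(\Omega)}$. Then I would feed this back into the energy identity exactly as in the proof of \eqref{1.14.0}, but now estimating $|\Re(f,u)|\le \|f\|_{L^p(\Omega)}\|u\|_{L^{p'}(\Omega)}\le \mathbf{e}\mathbf{p}\|f\|_{L^p(\Omega)}\|\nabla u\|_{L^2(\Omega)}$ and $(\Re\lambda+\mathfrak{c}_1)\|u\|_{L^2(\Omega)}^2\le \mathbf{c}|\Re\lambda|\cdot|\Re\lambda||\Im\lambda|^{-2}\|f\|_{L^p(\Omega)}^2$; absorbing the first term with Young's inequality yields $\|\nabla u\|_{L^2(\Omega)}\le \mathbf{c}|\Re\lambda||\Im\lambda|^{-1}\|f\|_{L^p(\Omega)}$, which is \eqref{1.14}. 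The main obstacle, or rather the point requiring care, is the bookkeeping of powers of $|\Re\lambda|$ and $|\Im\lambda|$: one must use the defining inequalities of $\Sigma_0$ at each step to make sure the lower-order terms $\mathfrak{c}_1\|u\|_{L^2(\Omega)}^2$ and the $\Re(f,u)$ term are genuinely dominated by the claimed right-hand side, and to check that the constant depends only on $n,\Omega,V_0$ through $\mathbf{p},\mathbf{e},\mathfrak{c}_1$ and is uniform over $V\in\mathscr{V}$, which it is since those constants were.
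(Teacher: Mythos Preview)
Your argument is correct. For \eqref{1.13} and \eqref{1.14.0} your route is essentially the paper's: the paper derives \eqref{1.13} from the imaginary part of the energy identity rather than the spectral expansion, but this is the same estimate, and for \eqref{1.14.0} both of you combine the real part of the energy identity with the lower bound on $\mathfrak{a}_V$ coming from Lemma~\ref{lem3}.

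The genuine difference is in \eqref{1.14}. The paper does not go through duality. Instead, when proving \eqref{1.14.0} it keeps the intermediate inequality
\[
\|\nabla u\|_{L^2(\Omega)}^2 \le \mathbf{c}\,|\Re\lambda|\,|\Im\lambda|^{-1}\,\|f\overline{u}\|_{L^1(\Omega)},
\]
obtained by inserting the imaginary-part identity $\|u\|_{L^2(\Omega)}^2\le |\Im\lambda|^{-1}\|f\overline{u}\|_{L^1(\Omega)}$ into the real-part estimate \emph{before} applying Cauchy--Schwarz. From there one simply writes $\|f\overline{u}\|_{L^1(\Omega)}\le \|f\|_{L^p(\Omega)}\|u\|_{L^{p'}(\Omega)}\le \mathbf{e}\mathbf{p}\,\|f\|_{L^p(\Omega)}\|\nabla u\|_{L^2(\Omega)}$, uses Young with a parameter $\epsilon$, and chooses $\epsilon$ so that the $\|\nabla u\|_{L^2(\Omega)}^2$ term on the right can be absorbed; this gives \eqref{1.14} in one stroke. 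Your approach trades this for a duality step (using $R_V(\lambda)^\ast=R_V(\overline\lambda)$ and \eqref{1.14.0} on the adjoint side) to first control $\|u\|_{L^2(\Omega)}$ by $\|f\|_{L^p(\Omega)}$, and only then closes the energy estimate. Both arguments are sound and yield the same bound with constants depending only on $n,\Omega,V_0$; the paper's version is shorter because it never leaves the single energy identity, while yours has the virtue of making explicit the $L^p\to L^2$ mapping bound for the resolvent as a byproduct.
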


\begin{proof}
Let $u:=R_V(\lambda)f$. Green's formula then yields
\[
\int_\Omega|\nabla u|^2dx -\int_\Omega (\lambda -V)|u|^2dx=\int_\Omega f\overline{u}dx.
\]
Hence
\begin{align}
&\int_\Omega|\nabla u|^2dx -\int_\Omega(\Re\lambda -V)|u|^2dx=\Re \int_\Omega f\overline{u}dx, \label{1.15}
\\
& -\Im \lambda \int_\Omega |u|^2dx=\Im \int_\Omega f\overline{u}dx.\label{1.16}
\end{align}
Then \eqref{1.13} follows easily from \eqref{1.16}.

Next, by using \eqref{1.15} and \eqref{1.12.0} with $\epsilon=1/2$, we  get
\[
\| \nabla u\|_{L^2(\Omega)}^2\le \mathbf{c}|\Re \lambda|\|u\|_{L^2(\Omega)}^2+\|f\overline{u}\|_{L^1(\Omega)}.
\]
This inequality combined with \eqref{1.16} yields
\begin{align}
\| \nabla u\|_{L^2(\Omega)}^2&\le \mathbf{c}|\Re \lambda||\Im \lambda|^{-1}\|f\overline{u}\|_{L^1(\Omega)} \label{1.17}
\\
&\le \mathbf{c}|\Re \lambda| |\Im \lambda|^{-1}\|u\|_{L^2(\Omega)}\|f\|_{L^2(\Omega)}.\nonumber
\end{align}
By combining this inequality and \eqref{1.13}, we obtain
\[
\| \nabla u\|_{L^2(\Omega)}^2\le \mathbf{c} |\Re \lambda||\Im \lambda|^{-2}\|f\|_{L^2(\Omega)}^2.
\]
That is we proved \eqref{1.14.0}.

On the other hand, applying H\"older's inequality, we get
\[
\|f\overline{u}\|_{L^1(\Omega)}\le \|u\|_{L^{p'}(\Omega)}\|f\|_{L^p(\Omega)},
\]
and hence, for all $\epsilon >0$, 
\[
\|f\overline{u}\|_{L^1(\Omega)}\le\frac{\epsilon}{2}\|u\|_{L^{p'}(\Omega)}^2+\frac{1}{2\epsilon}\|f\|_{L^p(\Omega)}^2.
\]
Therefore
\begin{equation}\label{1.18}
\|f\overline{u}\|_{L^1(\Omega)}\le\frac{\epsilon \mathbf{e}\mathbf{p}}{2}\| \nabla u\|_{L^2(\Omega)}^2+\frac{1}{2\epsilon}\|f\|_{L^p(\Omega)}^2.
\end{equation}
By using \eqref{1.18} in the first inequality of \eqref{1.17}, we find
\[
\| \nabla u\|_{L^2(\Omega)}^2
\le c_0|\Re \lambda||\Im \lambda|^{-1}\left[\frac{\epsilon \mathbf{e}\mathbf{p}}{2}\| \nabla u\|_{L^2(\Omega)}^2+\frac{1}{2\epsilon}\|f\|_{L^p(\Omega)}^2\right],
\]
where $c_0>0$ is a universal constant. By taking in this inequality $\epsilon$ so that $c_0|\Re \lambda||\Im \lambda|^{-1}\epsilon \mathbf{e}\mathbf{p}=1$, we obtain \eqref{1.14}.
\end{proof}

Define
\[
\Sigma=\{\lambda_\tau =(\tau+i)^2;\; \tau \ge 2\}\; (\subset \Sigma_0).
\]

\begin{corollary}\label{cor1}
Let $V\in  \mathscr{V}$, $\lambda_\tau=(\tau+i)^2\in \Sigma$ and $f\in L^2(\Omega)$. Then
\begin{align}
&2\|R_V(\lambda_\tau)f\|_{L^2(\Omega)}\le \tau^{-1}\|f\|_{L^2(\Omega)}, \label{1.19}
\\
&\|R_V(\lambda_\tau )f\|_{L^{p'}(\Omega)}\le \mathbf{c}\tau \|f\|_{L^p(\Omega)} .\label{1.20}
\end{align}
\end{corollary}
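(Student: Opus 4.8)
The plan is to specialize Lemma \ref{lem4} to the one–parameter family $\lambda_\tau=(\tau+i)^2$. First I would justify the inclusion $\Sigma\subset\Sigma_0$ that is asserted in the definition of $\Sigma$: writing $\lambda_\tau=(\tau^2-1)+2i\tau$, we have $\Re\lambda_\tau=\tau^2-1$ and $\Im\lambda_\tau=2\tau$, so for $\tau\ge 2$ one checks $|\Re\lambda_\tau|=\tau^2-1\ge 3\ge 1$, $|\Im\lambda_\tau|=2\tau\ge 4\ge 1$, and
\[
|\Re\lambda_\tau|\,|\Im\lambda_\tau|^{-1}=\frac{\tau^2-1}{2\tau}=\frac{\tau}{2}-\frac{1}{2\tau}\ge \frac{3}{4}\ge \frac12 .
\]
Hence every $\lambda_\tau\in\Sigma$ lies in $\Sigma_0$ and Lemma \ref{lem4} is applicable with $\lambda=\lambda_\tau$ (and, in particular, $\lambda_\tau\in\rho(A_V)$, so $R_V(\lambda_\tau)$ is well defined).

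For \eqref{1.19} I would simply invoke \eqref{1.13}, which gives
\[
\|R_V(\lambda_\tau)f\|_{L^2(\Omega)}\le|\Im\lambda_\tau|^{-1}\|f\|_{L^2(\Omega)}=(2\tau)^{-1}\|f\|_{L^2(\Omega)},
\]
and multiplying through by $2$ yields \eqref{1.19}. For \eqref{1.20} I would combine \eqref{1.14} with the embedding constants $\mathbf{e}$ and $\mathbf{p}$ fixed at the beginning of the section. Indeed \eqref{1.14} gives $\|R_V(\lambda_\tau)f\|_{H_0^1(\Omega)}\le \mathbf{c}\,|\Re\lambda_\tau|\,|\Im\lambda_\tau|^{-1}\|f\|_{L^p(\Omega)}$, and since $|\Re\lambda_\tau|\,|\Im\lambda_\tau|^{-1}=(\tau^2-1)/(2\tau)\le \tau/2\le\tau$, the right-hand side is bounded by $\mathbf{c}\tau\|f\|_{L^p(\Omega)}$. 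On the other hand, for any $w\in H_0^1(\Omega)$ one has, by the definitions of $\mathbf{e}$ and $\mathbf{p}$ and the convention $\|w\|_{H_0^1(\Omega)}=\|\nabla w\|_{L^2(\Omega)}$,
\[
\|w\|_{L^{p'}(\Omega)}\le \mathbf{e}\|w\|_{H^1(\Omega)}\le \mathbf{e}\mathbf{p}\|\nabla w\|_{L^2(\Omega)}=\mathbf{e}\mathbf{p}\|w\|_{H_0^1(\Omega)}.
\]
Applying this with $w=R_V(\lambda_\tau)f$ and absorbing the factor $\mathbf{e}\mathbf{p}$ into the generic constant gives \eqref{1.20}.

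I do not anticipate any genuine obstacle: the corollary is a direct consequence of Lemma \ref{lem4} together with the fixed Sobolev embedding. The only point requiring a little care is bookkeeping on the generic constant $\mathbf{c}$ in \eqref{1.20}, ensuring it still depends only on $n,\Omega,V_0$ after absorbing $\mathbf{e},\mathbf{p}$ (which depend only on $n$ and $\Omega$) and the numerical factor from the estimate $(\tau^2-1)/(2\tau)\le\tau$ valid for $\tau\ge 2$.
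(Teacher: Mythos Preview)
Your proposal is correct and is exactly the intended argument: the paper does not write out a proof for this corollary, simply recording it as an immediate specialization of Lemma \ref{lem4} together with the inclusion $\Sigma\subset\Sigma_0$ (noted parenthetically in the definition of $\Sigma$) and the Sobolev embedding $H_0^1(\Omega)\hookrightarrow L^{p'}(\Omega)$. Your verification of the inclusion, the derivation of \eqref{1.19} from \eqref{1.13}, and the passage from \eqref{1.14} to \eqref{1.20} via the constants $\mathbf{e},\mathbf{p}$ are all fine, and your bookkeeping on the generic constant is correct.
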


Define
\begin{equation}\label{theta}
p_\theta=2n/(n+2\theta),\quad p_\theta '=2n/(n-2\theta),\quad \theta\in [0,1],
\end{equation}
and note that  $p_\theta '$ is the conjugate of $p_\theta$. By applying Riesz-Thorin's theorem, we obtain from \eqref{1.19} and \eqref{1.20} the following result.

\begin{proposition}\label{prort}
Let $V\in  \mathscr{V}$. Then for all $\lambda_\tau =(\tau+i)^2\in \Sigma$, $R_V(\lambda_\tau)$ maps continuously $L^{p_\theta}(\Omega)$ into $L^{p_\theta '}(\Omega)$ and the following estimate holds
\begin{equation}\label{1.21}
\|R_V(\lambda_\tau)f\|_{L^{p_\theta '}(\Omega)}\le \mathbf{c}\tau^{-1+2\theta}\|f\|_{L^{p_\theta }(\Omega)},\quad f\in L^{p_\theta}(\Omega).
\end{equation}
\end{proposition}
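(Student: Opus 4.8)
The plan is to obtain \eqref{1.21} by interpolating, via the Riesz--Thorin theorem, between the two estimates of Corollary \ref{cor1}. Fix $\lambda_\tau=(\tau+i)^2\in\Sigma$ and regard $R_V(\lambda_\tau)$ --- a linear operator, being a resolvent --- as acting on simple functions on $\Omega$, which are dense in every $L^r(\Omega)$, $1\le r<\infty$. By \eqref{1.19}, $R_V(\lambda_\tau)$ is bounded for the $L^2(\Omega)\to L^2(\Omega)$ norms with constant $M_0=\tfrac12\tau^{-1}$, and by \eqref{1.20} it is bounded for the $L^p(\Omega)\to L^{p'}(\Omega)$ norms with constant $M_1=\mathbf{c}\tau$ (recall that, in the notation \eqref{theta}, $p=p_1$ and $p'=p_1'$, while $2=p_0=p_0'$).

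Next I would check that the exponents $p_\theta$, $p_\theta'$ of \eqref{theta} are precisely those produced by Riesz--Thorin applied to the pair $(L^2,L^2)$ at $\theta=0$ and the pair $(L^p,L^{p'})$ at $\theta=1$. Indeed $\tfrac{1-\theta}{2}+\tfrac{\theta}{p}=\tfrac12+\tfrac{\theta}{n}=\tfrac{n+2\theta}{2n}=\tfrac{1}{p_\theta}$ and $\tfrac{1-\theta}{2}+\tfrac{\theta}{p'}=\tfrac12-\tfrac{\theta}{n}=\tfrac{n-2\theta}{2n}=\tfrac{1}{p_\theta'}$, so the source and target interpolation spaces are exactly $L^{p_\theta}(\Omega)$ and $L^{p_\theta'}(\Omega)$.

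The Riesz--Thorin theorem then gives, for every $\theta\in[0,1]$ and every simple function $f$,
\[
\|R_V(\lambda_\tau)f\|_{L^{p_\theta'}(\Omega)}\le M_0^{1-\theta}M_1^{\theta}\|f\|_{L^{p_\theta}(\Omega)}=2^{\theta-1}\mathbf{c}^{\theta}\,\tau^{-1+2\theta}\|f\|_{L^{p_\theta}(\Omega)}.
\]
Since $\theta\in[0,1]$ one has $2^{\theta-1}\le 1$ and $\mathbf{c}^{\theta}\le\max(1,\mathbf{c})$, the latter depending only on $n$, $\Omega$ and $V_0$; absorbing these constants into $\mathbf{c}$ and extending by density from simple functions to all of $L^{p_\theta}(\Omega)$ yields \eqref{1.21}. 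There is essentially no analytic difficulty here: the only point requiring care is the bookkeeping --- pinning down the two endpoints, verifying the displayed interpolation identities for the exponents, and tracking how the powers of $\tau$ and the numerical constants combine --- together with the (trivial but necessary) observation that $R_V(\lambda_\tau)$ is a genuine linear operator, so that Riesz--Thorin applies verbatim.
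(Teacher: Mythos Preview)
Your proof is correct and follows exactly the approach indicated in the paper, which simply invokes the Riesz--Thorin theorem on the endpoint estimates \eqref{1.19} and \eqref{1.20}; you have merely spelled out the bookkeeping (verification of the interpolation exponents and tracking of the constants) that the paper leaves implicit.
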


\begin{lemma}\label{lem4.1}
Let $V\in  \mathscr{V}$. There exists $\lambda_\ast=\lambda_\ast(n,\Omega,V_0)\ge 2$ such that for any $\lambda \in \mathbb{C}\setminus \mathbb{R}$ satisfying $-\Re \lambda \ge \lambda_\ast$ and $f\in L^2(\Omega)$ we have
\begin{align}
&\|R_V(\lambda)f\|_{L^2(\Omega)}\le \sqrt{2}|\Re \lambda|^{-1/2}\|f\|_{L^2(\Omega)}, \label{re1}
\\
&\|R_V(\lambda)f\|_{H_0^1(\Omega)}\le \sqrt{2}\|f\|_{L^2(\Omega)}.\label{re2}
\end{align}
\end{lemma}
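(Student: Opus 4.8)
The plan is to reduce the statement on the half-line $\{\Re\lambda\le-\lambda_\ast\}$ to the estimates already available on $\Sigma_0$, exploiting that a negative real part with nonzero imaginary part forces $\lambda$ into a region where the self-adjoint operator $A_V$ has a quantitative spectral gap. First I would write $\lambda=-s+i\eta$ with $s=-\Re\lambda\ge\lambda_\ast$ and $\eta=\Im\lambda\neq 0$, set $u:=R_V(\lambda)f$, and run Green's formula exactly as in the proof of Lemma \ref{lem4} to obtain the two identities
\[
\int_\Omega|\nabla u|^2\,dx-\int_\Omega(\Re\lambda-V)|u|^2\,dx=\Re\int_\Omega f\overline u\,dx,\qquad -\Im\lambda\int_\Omega|u|^2\,dx=\Im\int_\Omega f\overline u\,dx.
\]
Now, unlike in Lemma \ref{lem4}, I use the \emph{first} identity rather than the second: since $\Re\lambda=-s<0$, the term $-\int_\Omega\Re\lambda\,|u|^2=s\|u\|_{L^2}^2$ is positive, and Lemma \ref{lem3} (inequality \eqref{1.12.0}) with $\epsilon=1/2$ bounds $\|V|u|^2\|_{L^1}\le\frac12\|\nabla u\|_{L^2}^2+\mathbf c_{1/2}\|u\|_{L^2}^2$. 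Choosing $\lambda_\ast$ so that $\lambda_\ast\ge 2\mathbf c_{1/2}$ (this is where the constant $\lambda_\ast=\lambda_\ast(n,\Omega,V_0)$ enters, via $\mathbf c_{1/2}=\mathbf c_{1/2}(n,\Omega,V_0)$), the term $\mathbf c_{1/2}\|u\|_{L^2}^2$ is absorbed into half of $s\|u\|_{L^2}^2$, yielding
\[
\frac12\|\nabla u\|_{L^2(\Omega)}^2+\frac{s}{2}\|u\|_{L^2(\Omega)}^2\le\Big|\Re\int_\Omega f\overline u\,dx\Big|\le\|f\|_{L^2(\Omega)}\|u\|_{L^2(\Omega)}.
\]

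From this single inequality both conclusions follow by elementary manipulations. Dropping the gradient term gives $\frac{s}{2}\|u\|_{L^2}^2\le\|f\|_{L^2}\|u\|_{L^2}$, hence $\|u\|_{L^2}\le 2s^{-1}\|f\|_{L^2}$; since $\sqrt2\,|\Re\lambda|^{-1/2}=\sqrt2\,s^{-1/2}$ and $2s^{-1}\le\sqrt2\,s^{-1/2}$ as soon as $s\ge 2$ (guaranteed by taking $\lambda_\ast\ge\max(2,2\mathbf c_{1/2})$), inequality \eqref{re1} holds. For \eqref{re2}, feed the bound $\|u\|_{L^2}\le 2s^{-1}\|f\|_{L^2}$ back into the displayed inequality: dropping now the $L^2$ term, $\frac12\|\nabla u\|_{L^2}^2\le\|f\|_{L^2}\|u\|_{L^2}\le 2s^{-1}\|f\|_{L^2}^2\le\|f\|_{L^2}^2$ (again using $s\ge 2$), so $\|\nabla u\|_{L^2}\le\sqrt2\,\|f\|_{L^2}$, which is the $H_0^1(\Omega)$-norm by the convention fixed in the Preliminaries.

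I do not expect a serious obstacle here; the one point requiring care is the bookkeeping of the constant $\lambda_\ast$ so that it depends only on $(n,\Omega,V_0)$ and not on $V$ or $\lambda$ — this is exactly what Lemma \ref{lem3} provides, since its constant $\mathbf c_\epsilon$ is uniform over $V\in\mathscr V$. A secondary subtlety is justifying that $R_V(\lambda)$ is well defined at such $\lambda$, i.e. that $\lambda\in\rho(A_V)$: this is automatic because $A_V$ is self-adjoint, so its spectrum is real, and $\Im\lambda\neq0$ places $\lambda$ in the resolvent set regardless of how negative $\Re\lambda$ is; in fact the argument above reproves $\|R_V(\lambda)\|\le|\Im\lambda|^{-1}$-type control implicitly. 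Everything else is the same Green's-identity computation as in Lemma \ref{lem4}, only exploiting the favorable sign of $\Re\lambda$ instead of the imaginary part.
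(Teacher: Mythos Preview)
Your proof is correct and follows essentially the same approach as the paper: both take the real part of the Green's-formula identity, use Lemma~\ref{lem3} with $\epsilon=1/2$ to absorb the potential term, and exploit the favorable sign of $\Re\lambda$ to extract the two bounds. The only cosmetic difference is that the paper bounds $\Re\int_\Omega f\overline u\,dx$ by Young's inequality $\|u\|_{L^2}^2+\|f\|_{L^2}^2$ (arriving at $\lambda_\ast=2(1+\mathbf c)$), whereas you keep Cauchy--Schwarz $\|f\|_{L^2}\|u\|_{L^2}$ and divide through; both routes yield the stated constants.
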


\begin{proof}
Let $\lambda \in \mathbb{C}\setminus \mathbb{R}$ with $\Re \lambda \le 0$, $f\in L^2(\Omega)$  and $u:=R_V(\lambda)f$. From \eqref{1.15}, we obtain
\begin{equation}\label{r1}
\int_\Omega|\nabla u|^2dx +|\Re \lambda |\int_\Omega |u|^2dx=-\int_\Omega V|u|^2dx+\Re \int_\Omega f\overline{u}dx.
\end{equation}
From \eqref{1.12.0}, we get
\begin{equation}\label{r2}
\|Vu^2\|_{L^1(\Omega)}\le \frac{1}{2} \|\nabla u\|_{L^2(\Omega)}^2+\mathbf{c}\|u\|_{L^2(\Omega)}^2.
\end{equation}

On the other hand, we have
\begin{equation}\label{r3}
\Re \int_\Omega f\overline{u}dx\le \|u\|_{L^2(\Omega)}^2+\|f\|_{L^2(\Omega)}^2.
\end{equation}
 By combining \eqref{r1}, \eqref{r2} and \eqref{r3}, we obtain
 \begin{equation}\label{r4}
\int_\Omega|\nabla u|^2dx +2(|\Re \lambda| -1-\mathbf{c}) \int_\Omega |u|^2dx\le 2\|f\|_{L^2(\Omega)}^2.
\end{equation}
Let $\lambda_\ast=2(1+\mathbf{c})$. If $|\Re \lambda|\ge \lambda_\ast$, then \eqref{r4} yields
\begin{align*}
&\|u\|_{L^2(\Omega)}\le \sqrt{2}|\Re \lambda|^{-1/2}\|f\|_{L^2(\Omega)},
\\
&\|\nabla u\|_{L^2(\Omega)}\le \sqrt{2}\|f\|_{L^2(\Omega)}.
\end{align*}
These are the expected inequalities.
\end{proof}

Recall that the trace operator $\gamma_0:W^{2,q}(\Omega)\rightarrow W^{2-1/q,q}(\Gamma)$ defined by $\gamma_0w=w_{|\Gamma}$, $w\in C^\infty (\overline{\Omega})$, is bounded. From \cite[Theorem 2.3.3.6]{Gr}, there exists $c_0=c_0(n,\Omega)>0$ and $\mu_0=\mu_0(n,\Omega)>0$ such that for any $\mu \ge \mu_0$ and $w\in W^{2,q}(\Omega)$ we have
\begin{equation}\label{ae1}
\|w\|_{W^{2,q}(\Omega)}\le c_0\left( \|(\Delta -\mu)w \|_{L^q(\Omega)}+\|\gamma_0w\|_{W^{2-1/q,q}(\Gamma)}\right).
\end{equation}

Define
\[
\Pi_1=\{\lambda \in \mathbb{C}\setminus \mathbb{R};\; -\Re \lambda \ge \max(\mu_0,\lambda_\ast),\; |\Im \lambda||\Re \lambda|^{-1/2}\le 1 \},
\]
where $\mu_0$ is as above and $\lambda^\ast$ is as in Lemma \ref{lem4.1}.

\begin{lemma}\label{lemres}
Let $V\in \mathscr{V}$, $f\in L^2(\Omega)$ and $\lambda \in \Pi_1$. Then
\begin{equation}\label{3.1}
\|R_V(\lambda)(f)\|_{W^{2,q}(\Omega)}\le \mathbf{c}\|f\|_{L^2(\Omega)}.
\end{equation}
\end{lemma}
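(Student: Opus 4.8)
The plan is to combine the $L^2$--$L^2$ and $H^1_0$ bounds from Lemma \ref{lem4.1} with the interior-elliptic a priori estimate \eqref{ae1}, using $\mu = -\Re\lambda$ as the shift parameter. Write $u := R_V(\lambda)(f)$, so that $u$ solves $(-\Delta + V - \lambda)u = f$ in $\Omega$ with $u = 0$ on $\Gamma$; equivalently $(\Delta - (-\Re\lambda))u = (V - i\Im\lambda)u - f$ in $\Omega$ and $\gamma_0 u = 0$. Since $\lambda \in \Pi_1$ forces $-\Re\lambda \ge \mu_0$, the estimate \eqref{ae1} applies and gives
\[
\|u\|_{W^{2,q}(\Omega)} \le c_0 \|(V - i\Im\lambda)u - f\|_{L^q(\Omega)} \le c_0\big( \|Vu\|_{L^q(\Omega)} + |\Im\lambda|\,\|u\|_{L^q(\Omega)} + \|f\|_{L^q(\Omega)}\big).
\]
Now I estimate the three terms on the right by $\mathbf{c}\|f\|_{L^2(\Omega)}$. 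The last is immediate since $q < 2$ and $\Omega$ is bounded, so $\|f\|_{L^q(\Omega)} \le \mathbf{c}\|f\|_{L^2(\Omega)}$. For the middle term I use $-\Re\lambda \ge \lambda_\ast$ and \eqref{re1}: $|\Im\lambda|\,\|u\|_{L^q(\Omega)} \le \mathbf{c}|\Im\lambda|\,\|u\|_{L^2(\Omega)} \le \mathbf{c}|\Im\lambda|\,|\Re\lambda|^{-1/2}\|f\|_{L^2(\Omega)}$, and the constraint $|\Im\lambda||\Re\lambda|^{-1/2}\le 1$ defining $\Pi_1$ makes this $\le \mathbf{c}\|f\|_{L^2(\Omega)}$.

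The only genuinely delicate term is $\|Vu\|_{L^q(\Omega)}$, where the unboundedness of $V$ enters. Here I apply Hölder's inequality with the exponent pair that makes $V_0 \in L^{n/2}(\Omega)$ and $u \in L^{p'}(\Omega) = L^{2n/(n-2)}(\Omega)$ fit together: since $\tfrac{1}{q} = \tfrac{n+4}{2n} = \tfrac{2}{n} + \tfrac{n-2}{2n} = \tfrac{2}{n} + \tfrac{1}{p'}$, we get
\[
\|Vu\|_{L^q(\Omega)} \le \|V_0 u\|_{L^q(\Omega)} \le \|V_0\|_{L^{n/2}(\Omega)}\,\|u\|_{L^{p'}(\Omega)} \le \mathbf{e}\,\|V_0\|_{L^{n/2}(\Omega)}\,\|u\|_{H^1(\Omega)},
\]
using the Sobolev embedding $H^1(\Omega)\hookrightarrow L^{p'}(\Omega)$. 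Then $\|u\|_{H^1(\Omega)} \le \mathbf{p}\,\|\nabla u\|_{L^2(\Omega)} = \mathbf{p}\,\|u\|_{H^1_0(\Omega)} \le \mathbf{c}\|f\|_{L^2(\Omega)}$ by \eqref{re2} (valid because $-\Re\lambda \ge \lambda_\ast$). Collecting the three bounds yields \eqref{3.1} with $\mathbf{c} = \mathbf{c}(n,\Omega,V_0)$.

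The main obstacle is ensuring the exponents line up so that $Vu$ lands in $L^q$ with a norm controllable by $\|f\|_{L^2}$: one must resist trying to bound $\|Vu\|_{L^q}$ by interpolating $u$ between $L^2$ and $L^{p'}$ in a way that reintroduces powers of $|\Re\lambda|$, and instead use the clean Hölder split $\tfrac1q = \tfrac2n + \tfrac1{p'}$ together with the $H^1_0$ bound \eqref{re2}, whose right-hand side is $\|f\|_{L^2(\Omega)}$ with \emph{no} spectral parameter. A secondary point worth checking is that \eqref{ae1} is stated with the shift $\mu \ge \mu_0$ appearing inside $(\Delta - \mu)$, and $\Pi_1$ was defined precisely so that $\mu = -\Re\lambda$ is an admissible shift; the remaining part $(V - i\Im\lambda)u$ of the forcing is then handled as a lower-order perturbation as above. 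No restriction beyond $\lambda \in \Pi_1$ is needed, and all constants depend only on $n$, $\Omega$, and $V_0$ through $\mathbf{e}$, $\mathbf{p}$, $c_0$, $\lambda_\ast$, and $\|V_0\|_{L^{n/2}(\Omega)}$.
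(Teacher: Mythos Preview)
Your approach is essentially the paper's: apply the a priori estimate \eqref{ae1} with shift $\mu=-\Re\lambda$ and then control the lower-order pieces $Vu$, $|\Im\lambda|\,u$, $f$ via Lemma~\ref{lem4.1}. The paper differs only cosmetically in that it splits $u$ into real and imaginary parts before invoking \eqref{ae1}; your direct application to the complex-valued $u$ is equally legitimate.

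There is, however, an arithmetic slip in your H\"older step. You claim $\tfrac{1}{q}=\tfrac{2}{n}+\tfrac{1}{p'}$, but in fact
\[
\tfrac{2}{n}+\tfrac{1}{p'}=\tfrac{4}{2n}+\tfrac{n-2}{2n}=\tfrac{n+2}{2n}=\tfrac{1}{p}\neq\tfrac{1}{q}=\tfrac{n+4}{2n}.
\]
The correct split is $\tfrac{1}{q}=\tfrac{2}{n}+\tfrac{1}{2}$, which gives $\|Vu\|_{L^q(\Omega)}\le\|V_0\|_{L^{n/2}(\Omega)}\|u\|_{L^2(\Omega)}$; this is precisely what the paper uses, and then \eqref{re1} yields $\|u\|_{L^2(\Omega)}\le\sqrt{2}\,|\Re\lambda|^{-1/2}\|f\|_{L^2(\Omega)}\le\mathbf{c}\|f\|_{L^2(\Omega)}$. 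Your route via $L^{p'}$ and \eqref{re2} is still salvageable on the bounded domain (H\"older gives $\|V_0u\|_{L^p}\le\|V_0\|_{L^{n/2}}\|u\|_{L^{p'}}$, and $L^p(\Omega)\hookrightarrow L^q(\Omega)$), so the conclusion stands, but the exponent identity as written is false and should be corrected.
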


\begin{proof}
Let $u:=R_V(\lambda)f\in W^{2,q}(\Omega)$, $v:=\Re u$ and $w:=\Im u$.  As $-\Delta u=(\lambda -V)u+f$, we obtain 
 \[
 (\Delta +\Re \lambda) v=  \Im \lambda w+Vv-\Re f,\quad (\Delta+\Re \lambda) w= -\Im \lambda v+Vw-\Im f.
 \]
Then \eqref{ae1} yields
\begin{align*}
\|v\|_{W^{2,q}(\Omega)}&\le c_0 \|(\Delta +\Re \lambda) v\|_{L^q(\Omega)}
\\
&\le c_0\|\Im \lambda w+Vv-\Re f\|_{L^q(\Omega)}.
\end{align*}
As
\[
\|Vv\|_{L^q(\Omega)}\le \|V_0\|_{L^{n/2}(\Omega)}\|v\|_{L^2(\Omega)},
\]
we obtain 
\[
\|v\|_{W^{2,q}(\Omega)}\le \mathbf{c}\left(|\Im \lambda|\|w\|_{L^2(\Omega)}+\|v\|_{L^2(\Omega)}+\|f\|_{L^2(\Omega)} \right).
\]
Similarly, we have
\[
\|w\|_{W^{2,q}(\Omega)}\le \mathbf{c}\left(|\Im \lambda|\|v\|_{L^2(\Omega)}+\|w\|_{L^2(\Omega)}+\|f\|_{L^2(\Omega)} \right).
\]
By combining the last two inequalities, we find
\[
\|u\|_{W^{2,q}(\Omega)}\le\mathbf{c}\left(|\Im \lambda|\|u\|_{L^2(\Omega)}+\|u\|_{L^2(\Omega)}+\|f\|_{L^2(\Omega)} \right).
\]
This and \eqref{re1} imply
\[
\|u\|_{W^{2,q}(\Omega)}\le \mathbf{c}\left(|\Im \lambda||\Re \lambda|^{-1/2}+1\right)\|f\|_{L^2(\Omega)},
\]
from which \eqref{3.1} follows.
\end{proof}

\subsection{The difference between two Dirichlet to Neumann maps}

Let 
\[
\mathcal{B}=W^{2-1/p,p}(\Gamma)\cap L^{q'}(\Gamma)
\]
 be endowed with the norm
\[
\|\varphi\|:=\|\varphi\|_{W^{2-1/p,p}(\Gamma)}+\|\varphi\|_{L^{q'}(\Gamma)},\quad \varphi \in \mathcal{B}.
\]

 As in the previous subsection, $\mathbf{c}=\mathbf{c}(n,\Omega,V_0)>0$ still denotes a generic constant.

Let $V\in \mathscr{V}$, $\varphi\in \mathcal{B}$ and $\lambda\in \Pi_1$. We proceed as in the proof of Lemma \ref{lemres} to obtain
\begin{equation}\label{3.3}
\|u_V(\lambda)(\varphi)\|_{W^{2,q}(\Omega)}\le \mathbf{c}\left(|\Im \lambda|\|u_V(\lambda)(\varphi)\|_{L^2(\Omega)}+\|\varphi\|\right).
\end{equation}

\begin{lemma}\label{lemma1}
Let $V\in \mathscr{V}$ and $\varphi\in \mathcal{B}$. There exists $\mu_\ast=\mu_\ast(n,\Omega,V_0) \ge \mu_0$ so that for all $\lambda \in \mathbb{C}\setminus \mathbb{R}$ with $-\Re \lambda \ge \mu_\ast$ we have
\begin{align}
 &\|u_V(\lambda)(\varphi)\|_{L^2(\Omega)}\le \mathbf{c}|\Re \lambda|^{-1/2}\| u_V(\lambda)(\varphi)\|_{W^{2,q}(\Omega)}^{1/2}\|\varphi\|^{1/2},\label{3.4}
 \\
 & \|u_V(\lambda)(\varphi)\|_{L^{p'}(\Omega)}\le \mathbf{c}\| u_V(\lambda)(\varphi)\|_{W^{2,q}(\Omega)}^{1/2}\|\varphi\|^{1/2}.\label{3.4.1}
\end{align}
\end{lemma}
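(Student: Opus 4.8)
\emph{Approach.} The obstacle — and the reason \eqref{3.4.1} is not merely a Sobolev embedding — is that $u:=u_V(\lambda)(\varphi)$ is only known to belong to $W^{2,q}(\Omega)$, which for $n\ge5$ embeds neither in $H^1(\Omega)$ nor in $L^{p'}(\Omega)$: one cannot test the equation against $u$ itself, and $\int_\Omega V|u|^2\,dx$ need not even be finite a priori. I would therefore first establish \eqref{3.4}--\eqref{3.4.1} for smooth boundary data and then pass to the limit. So let $\varphi\in C^\infty(\Gamma)$ first, and let $\Phi$ be its harmonic extension; then $\Phi\in W^{2,q}(\Omega)\cap H^1(\Omega)\cap L^{p'}(\Omega)$, one checks that $(\lambda-V)\Phi\in H^{-1}(\Omega)$ and $(A_V-\lambda)(u-\Phi)=(\lambda-V)\Phi$, hence $u-\Phi=R_V(\lambda)[(\lambda-V)\Phi]\in H_0^1(\Omega)$ and $u\in H^1(\Omega)\cap W^{2,q}(\Omega)\hookrightarrow L^{p'}(\Omega)$. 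In particular $V|u|^2\in L^1(\Omega)$, the boundary pairing $\langle\gamma_1u,\varphi\rangle=\int_\Gamma(\gamma_1u)\overline{\varphi}\,d\sigma$ makes sense since $\gamma_1u\in W^{1-1/q,q}(\Gamma)\hookrightarrow L^q(\Gamma)$ and $\varphi\in L^{q'}(\Gamma)$, and Green's formula gives
\[
\|\nabla u\|_{L^2(\Omega)}^2+\int_\Omega V|u|^2\,dx-\lambda\|u\|_{L^2(\Omega)}^2=\langle\gamma_1u,\varphi\rangle .
\]

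\emph{The energy estimate.} Taking the real part and using $\Re\lambda<0$ leaves
\[
\|\nabla u\|_{L^2(\Omega)}^2+|\Re\lambda|\,\|u\|_{L^2(\Omega)}^2=-\int_\Omega V|u|^2\,dx+\Re\langle\gamma_1u,\varphi\rangle .
\]
I would bound $\big|\int_\Omega V|u|^2\,dx\big|\le\|Vu^2\|_{L^1(\Omega)}$ by \eqref{1.12.0} with $\epsilon=1/2$ — the proof of Lemma \ref{lem3} applies verbatim to every $u\in H^1(\Omega)$, not only to $u\in H_0^1(\Omega)$, once $\mathbf{p}\|\nabla u\|_{L^2(\Omega)}$ is replaced by $\|u\|_{H^1(\Omega)}$ and the resulting $\|u\|_{L^2(\Omega)}^2$ is absorbed — obtaining $\|Vu^2\|_{L^1(\Omega)}\le\tfrac12\|\nabla u\|_{L^2(\Omega)}^2+\mathbf{c}\|u\|_{L^2(\Omega)}^2$; and I would bound $|\langle\gamma_1u,\varphi\rangle|\le\|\gamma_1u\|_{L^q(\Gamma)}\|\varphi\|_{L^{q'}(\Gamma)}\le\mathbf{c}\|u\|_{W^{2,q}(\Omega)}\|\varphi\|$ using the boundedness of $\gamma_1\colon W^{2,q}(\Omega)\to W^{1-1/q,q}(\Gamma)\hookrightarrow L^q(\Gamma)$. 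Choosing $\mu_\ast:=\max(\mu_0,1,2\mathbf{c})$, for $\lambda\in\mathbb{C}\setminus\mathbb{R}$ with $-\Re\lambda\ge\mu_\ast$ this gives
\[
\tfrac12\|\nabla u\|_{L^2(\Omega)}^2+\tfrac12|\Re\lambda|\,\|u\|_{L^2(\Omega)}^2\le\mathbf{c}\|u\|_{W^{2,q}(\Omega)}\|\varphi\|,
\]
which is \eqref{3.4}; since moreover $|\Re\lambda|\ge1$, it also gives $\|u\|_{H^1(\Omega)}^2\le\mathbf{c}\|u\|_{W^{2,q}(\Omega)}\|\varphi\|$, whence $\|u\|_{L^{p'}(\Omega)}\le\mathbf{e}\|u\|_{H^1(\Omega)}$ yields \eqref{3.4.1} — so far only for $\varphi\in C^\infty(\Gamma)$.

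\emph{Density.} For general $\varphi\in\mathcal{B}$ I would pick $\varphi_j\in C^\infty(\Gamma)$ with $\varphi_j\to\varphi$ in $\mathcal{B}$ (mollification on $\Gamma$ converges simultaneously in $W^{2-1/q,q}(\Gamma)$ and in $L^{q'}(\Gamma)$). By \eqref{1.8.e} the solutions $u_j:=u_V(\lambda)(\varphi_j)$ converge to $u$ in $W^{2,q}(\Omega)$, hence in $L^2(\Omega)$, while $\|\varphi_j\|\to\|\varphi\|$; passing to the limit in the previous display gives \eqref{3.4}. For \eqref{3.4.1}, a subsequence of $(u_j)$ converges to $u$ a.e.\ on $\Omega$, so Fatou's lemma yields $\|u\|_{L^{p'}(\Omega)}\le\liminf_j\|u_j\|_{L^{p'}(\Omega)}\le\mathbf{c}\|u\|_{W^{2,q}(\Omega)}^{1/2}\|\varphi\|^{1/2}$ (this also shows a posteriori that $u\in L^{p'}(\Omega)$).

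\emph{Main obstacle.} The genuinely delicate point is the one flagged at the start: because $u_V(\lambda)(\varphi)$ carries only $W^{2,q}$-regularity while the energy computation lives in $H^1(\Omega)$ and $L^{p'}(\Omega)$, the identities used are not legitimate for rough $\varphi$, so one must detour through smooth data and recover the estimates by density together with Fatou's lemma. Everything else is the same Green-formula bookkeeping as in the proof of Lemma \ref{lem4}, the only extra ingredient being the harmless extension of \eqref{1.12.0} from $H_0^1(\Omega)$ to $H^1(\Omega)$.
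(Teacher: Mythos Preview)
Your argument follows exactly the paper's approach: Green's formula, absorption of the potential term via \eqref{1.12.0}, and H\"older on the boundary pairing together with the boundedness of $\gamma_1\colon W^{2,q}(\Omega)\to L^q(\Gamma)$. The paper applies Green's formula to $u$ directly without justifying that $u\in H^1(\Omega)$, whereas you correctly observe that $W^{2,q}(\Omega)\not\hookrightarrow H^1(\Omega)$ for $n\ge5$ and supply the missing smooth-data-plus-density argument (and the harmless extension of \eqref{1.12.0} to $H^1(\Omega)$), so your version is in fact more complete than the paper's.
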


\begin{proof}
Let $u:=u_V(\lambda)(\varphi)$. By applying Green's formula, we find
\[
\int_\Omega |\nabla u|^2dx+\int_\Omega (V-\lambda)|u|^2dx=\int_\Gamma \gamma_1u\overline{\varphi}d\sigma .
\]
By proceeding as in the proof of Lemma \ref{lem4.1}, we find $\mu_\ast=\mu_\ast(n,\Omega,V_0) \ge \mu_0$ such that for all $\lambda \in \mathbb{C}\setminus \mathbb{R}$ satisfying $-\Re \lambda \ge \mu_\ast$ we have
\[
\|\nabla u\|_{L^2(\Omega)}^2+|\Re \lambda|\|u\|_{L^2(\Omega)}^2\le \mathbf{c}\|\varphi\gamma_1u\|_{L^1(\Gamma)}
\]
and then
\begin{align*}
\|\nabla u\|_{L^2(\Omega)}^2+|\Re \lambda|\|u\|_{L^2(\Omega)}^2&\le \mathbf{c}\| \gamma_1u\|_{L^q(\Gamma)}\|\varphi\|_{L^{q'}(\Gamma)}
\\
&\le \mathbf{c}\| \gamma_1u\|_{L^q(\Gamma)}\|\varphi\|.
\end{align*}
The proof  is completed by using that  $\gamma_1$ is bounded from $W^{2,q}(\Omega)$ into $L^q(\Gamma)$.
\end{proof}

Define
\[
\Pi_2=\{\lambda \in \mathbb{C}\setminus \mathbb{R};\; -\Re \lambda \ge \max(\lambda_\ast,\mu_\ast),\; |\Im \lambda||\Re \lambda|^{-1/2}\le 1 \}\; (\subset \Pi_1).
\]
where $\lambda_\ast$ is as in Lemma \ref{lem4.1} and $\mu_\ast$ is as in Lemma \ref{lemma1}.

Let $V\in \mathscr{V}$, $\lambda \in \Pi_2$ and $\varphi\in \mathcal{B}$. By combining \eqref{3.3} and \eqref{3.4}, we derive
\[
\|u_V(\lambda)(\varphi)\|_{W^{2,q}(\Omega)}\le \mathbf{c}\left(\| u_V(\lambda)(\varphi)\|_{W^{2,q}(\Omega)}^{1/2}\|\varphi\|^{1/2}+\|\varphi\|\right).
\]
Whence
\[
\|u_V(\lambda)(\varphi)\|_{W^{2,q}(\Omega)}\le \frac{1}{2}\| u_V(\lambda)(\varphi)\|_{W^{2,q}(\Omega)}+\mathbf{c}\|\varphi\|
\]
and hence
\begin{equation}\label{3.5}
\|u_V(\varphi)\|_{W^{2,q}(\Omega)}\le \mathbf{c}\|\varphi\|.
\end{equation}
This and \eqref{3.4} imply
\begin{equation}\label{3.6}
 \|u_V(\lambda)(\varphi)\|_{L^2(\Omega)}\le \mathbf{c} |\Re \lambda|^{-1/2}\|\varphi\|.
\end{equation}
Similarly, we obtain from  \eqref{3.4.1}
\begin{equation}\label{3.6.1}
 \|u_V(\lambda)(\varphi)\|_{L^{p'}(\Omega)}\le  \mathbf{c}\|\varphi\|.
\end{equation}

For all  non negative integer $j$,  $\mathbf{c}_j=\mathbf{c}_j(n,\Omega,V_0,j)>0$ will denote in the remaining part of this subsection a generic constant.
 
\begin{lemma}\label{lem1.1}
Let $j\ge 0$ be an integer,  $V\in \mathscr{V}$, $\lambda \in \Pi_2$ and $\varphi\in \mathcal{B}$. Then we have
\begin{align}
&\|u_V^{(j)}(\varphi)(\lambda)(\varphi)\|_{W^{2,q}(\Omega)}\le\mathbf{c}_j|\Re\lambda|^{-j/2}\|\varphi\|, \label{eq5.r}
\\
&\|u_V^{(j)}(\varphi)(\lambda)(\varphi)\|_{L^2(\Omega)}\le \mathbf{c}_j|\Re\lambda|^{-(j+1)/2}\|\varphi\|,\label{eq6.r}
\\
&\|u_V^{(j)}(\varphi)(\lambda)(\varphi)\|_{L^{p'}(\Omega)}\le \mathbf{c}_j|\Re\lambda|^{-j/2}\|\varphi\|.\label{eq6.r.1}
\end{align}
\end{lemma}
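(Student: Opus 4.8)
The plan is to argue by induction on $j\ge 0$. The base case $j=0$ is nothing but the estimates \eqref{3.5}, \eqref{3.6} and \eqref{3.6.1} already established for $\lambda\in\Pi_2$, which is exactly the range $-\Re\lambda\ge\max(\lambda_\ast,\mu_\ast)$ considered here. The driving identity for the inductive step is \eqref{d}: for $j\ge 1$, $v:=u_V^{(j)}(\lambda)(\varphi)$ solves $(-\Delta+V-\lambda)v=j\,u_V^{(j-1)}(\lambda)(\varphi)$ in $\Omega$ with zero trace on $\Gamma$, and since $u_V^{(j-1)}(\lambda)(\varphi)\in W^{2,q}(\Omega)\subset L^2(\Omega)\subset H^{-1}(\Omega)$, this reads precisely $u_V^{(j)}(\lambda)(\varphi)=j\,R_V(\lambda)u_V^{(j-1)}(\lambda)(\varphi)$. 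I would therefore assume the three bounds at level $j-1$ and chain them with the resolvent estimates of the previous subsections, all of which are available because $\Pi_2\subset\Pi_1$ and $-\Re\lambda\ge\lambda_\ast$.

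For \eqref{eq6.r} at level $j$, apply \eqref{re1} to get $\|u_V^{(j)}(\lambda)(\varphi)\|_{L^2(\Omega)}\le j\sqrt2\,|\Re\lambda|^{-1/2}\|u_V^{(j-1)}(\lambda)(\varphi)\|_{L^2(\Omega)}$ and insert the inductive bound \eqref{eq6.r} at $j-1$ (which carries $|\Re\lambda|^{-j/2}$); the extra factor $|\Re\lambda|^{-1/2}$ upgrades the power to $|\Re\lambda|^{-(j+1)/2}$. For \eqref{eq5.r} at level $j$, apply Lemma \ref{lemres} with $f=j\,u_V^{(j-1)}(\lambda)(\varphi)\in L^2(\Omega)$ to obtain $\|u_V^{(j)}(\lambda)(\varphi)\|_{W^{2,q}(\Omega)}\le j\,\mathbf{c}\,\|u_V^{(j-1)}(\lambda)(\varphi)\|_{L^2(\Omega)}$ and again feed in \eqref{eq6.r} at $j-1$, producing the power $|\Re\lambda|^{-j/2}$. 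For \eqref{eq6.r.1} at level $j$, use \eqref{re2} together with the continuous embeddings $H_0^1(\Omega)\hookrightarrow H^1(\Omega)\hookrightarrow L^{p'}(\Omega)$ quantified by $\mathbf{p}$ and $\mathbf{e}$, so that $\|u_V^{(j)}(\lambda)(\varphi)\|_{L^{p'}(\Omega)}\le \mathbf{e}\mathbf{p}\,\|u_V^{(j)}(\lambda)(\varphi)\|_{H_0^1(\Omega)}\le j\sqrt2\,\mathbf{e}\mathbf{p}\,\|u_V^{(j-1)}(\lambda)(\varphi)\|_{L^2(\Omega)}$, and once more inserting \eqref{eq6.r} at $j-1$ gives the factor $|\Re\lambda|^{-j/2}$. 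Absorbing all accumulated factors (which may depend on $j$, as permitted) into $\mathbf{c}_j$ closes the induction.

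There is no genuine difficulty here; the only two points deserving attention are, first, that the three estimates must be run simultaneously in a single induction, because the $W^{2,q}$ and $L^{p'}$ bounds at level $j$ are controlled not by the corresponding bound at level $j-1$ but by the $L^2$ bound at level $j-1$, and second, that the standing hypothesis $-\Re\lambda\ge\max(\lambda_\ast,\mu_\ast)$, i.e. $\lambda\in\Pi_2$, is precisely what makes the base-case estimates \eqref{3.5}--\eqref{3.6.1}, Lemma \ref{lem4.1}, and Lemma \ref{lemres} all simultaneously applicable along the recursion.
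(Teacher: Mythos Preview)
Your proposal is correct and follows essentially the same route as the paper: the base case is exactly \eqref{3.5}--\eqref{3.6.1}, and the inductive step writes $u_V^{(j)}=jR_V(\lambda)u_V^{(j-1)}$ via \eqref{d} and then applies \eqref{re1}, Lemma~\ref{lemres}, and \eqref{re2} (with the Sobolev embedding $H_0^1\hookrightarrow L^{p'}$) to the $L^2$ bound at level $j-1$. Your remark that the three estimates must be carried together because the $W^{2,q}$ and $L^{p'}$ bounds at level $j$ feed on the $L^2$ bound at level $j-1$ is exactly the mechanism the paper uses.
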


\begin{proof}
Let $u(\lambda):=u_V(\lambda)(\varphi)$. We have from \eqref{d}
\[
(-\Delta +V-\lambda )u^{(1)}(\lambda)=u(\lambda),\quad \gamma_0u^{(1)}=0.
\] 
Thus,
\[
u^{(1)}(\lambda)=R_V(\lambda)(u(\lambda)).
\]
Whence, according to \eqref{3.1},
\begin{equation}\label{eq1}
\|u^{(1)}(\lambda)\|_{W^{2,q}(\Omega)}\le \mathbf{c}\|u(\lambda)\|_{L^2(\Omega)}.
\end{equation}
This and \eqref{3.6}  yield
\begin{equation}\label{eq2}
\|u^{(1)}(\lambda)\|_{W^{2,q}(\Omega)}\le \mathbf{c}|\Re\lambda|^{-1/2}\|\varphi\|.
\end{equation}
On the other hand, it follows from \eqref{re1} and \eqref{re2}
\begin{align*}
&\|u^{(1)}(\lambda)\|_{L^2(\Omega)}\le \sqrt{2}|\Re \lambda|^{-1/2}\|u(\lambda)\|_{L^2(\Omega)},
\\
&\|u^{(1)}(\lambda)\|_{L^{p'}(\Omega)}\le \sqrt{2}\|u(\lambda)\|_{L^2(\Omega)}.
\end{align*}
By applying \eqref{3.6} and \eqref{3.6.1}, we find
\begin{align}
&\|u^{(1)}(\lambda)\|_{L^2(\Omega)}\le \mathbf{c}|\Re\lambda|^{-2(1/2)}\|\varphi\|,\label{eq4}
\\
&\|u^{(1)}(\lambda)\|_{L^{p'}(\Omega)}\le \mathbf{c}|\Re\lambda|^{-1/2} \|\varphi\|.\label{eq4.1}
\end{align}
We get again from \eqref{d}
\[
(-\Delta +V-\lambda )u^{(2)}(\lambda)=2u^{(1)}(\lambda),\quad \gamma_0u^{(2)}=0.
\] 
We repeat the previous calculations replacing $u(\lambda)$ with $u^{(1)}(\lambda)$ in order to obtain
\begin{align*}
&\|u^{(2)}(\lambda)\|_{W^{2,q}(\Omega)}\le\mathbf{c}|\Re\lambda|^{-2(1/2)}\|\varphi\|. 
\\
&\|u^{(2)}(\lambda)\|_{L^2(\Omega)}\le \mathbf{c} |\Re\lambda|^{-1/2-2(1/2)}\|\varphi\|.
\\
&\|u^{(2)}(\lambda)\|_{L^{p'}(\Omega)}\le \mathbf{c}|\Re\lambda|^{-2(1/2)}\|\varphi\|.
\end{align*}
Next, by induction in $j$, $ j\ge 1$, since by \eqref{d} 
\[
(-\Delta +V-\lambda )u^{(j)}(\lambda)=ju^{(j-1)}(\lambda),\quad \gamma_0u^{(j)}=0,
\] 
we argue as above to obtain
\begin{align*}
&\|u^{(j)}(\lambda)\|_{W^{2,q}(\Omega)}\le  \mathbf{c}_j|\Re\lambda|^{-j(1/2)}\|\varphi\|, 
\\
&\|u^{(j)}(\lambda)\|_{L^2(\Omega)}\le  \mathbf{c}_j |\Re\lambda|^{-1/2-j(1/2)}\|\varphi\|,
\\
&\|u^{(j)}(\lambda)\|_{L^{p'}(\Omega)}\le  \mathbf{c}_j |\Re\lambda|^{-j(1/2)}\|\varphi\|.
\end{align*}
These inequalities complete the proof.
\end{proof}

 Next, we establish the following result.
 \begin{lemma}\label{lem1.2}
Let $j\ge 0$ be an integer $(V_1,V_2)\in \mathscr{W}$, $\lambda \in \Pi_2$ and $\varphi\in \mathcal{B}$. Then  we have
\begin{align}
&\|u_{V_1}^{(j)}(\lambda)(\varphi)- u_{V_2}^{(j)}(\lambda)(\varphi)\|_{W^{2,q}(\Omega)}\le  \mathbf{c}_j |\Re\lambda|^{-j/2}\|\varphi\|.\label{e7.r}
\\
&\|u_{V_1}^{(j)}(\lambda)(\varphi)- u_{V_2}^{(j)}(\lambda)(\varphi)\|_{L^2(\Omega)}\le \mathbf{c}_j |\Re\lambda|^{-(j+1)/2}\|\varphi\|. \label{e7.1.r}
\end{align}
\end{lemma}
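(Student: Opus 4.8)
The plan is to prove Lemma \ref{lem1.2} by induction on $j\ge 0$, setting $u_i(\lambda):=u_{V_i}(\lambda)(\varphi)$ and $w_j:=u_{V_1}^{(j)}(\lambda)(\varphi)-u_{V_2}^{(j)}(\lambda)(\varphi)$, and comparing the two boundary value problems solved by $u_1$ and $u_2$. First I would treat the base case $j=0$. Subtracting the equations $(-\Delta+V_1-\lambda)u_1=0$ and $(-\Delta+V_2-\lambda)u_2=0$ gives $(-\Delta+V_1-\lambda)w_0=-(V_1-V_2)u_2=(V_2-V_1)u_2$ in $\Omega$ with $\gamma_0 w_0=0$, so $w_0=R_{V_1}(\lambda)\big((V_2-V_1)u_2\big)$. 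Using $|V_1-V_2|\le W_0\in L^n(\Omega)$ and Hölder's inequality, $\|(V_2-V_1)u_2\|_{L^2(\Omega)}$ is \emph{not} directly controlled, so instead I would estimate $(V_2-V_1)u_2$ in $L^p(\Omega)$ via $\|W_0 u_2\|_{L^p(\Omega)}\le \|W_0\|_{L^{n}(\Omega)}\|u_2\|_{L^{p'}(\Omega)}$ (since $1/p=1/n+1/p'$) and then apply the resolvent bound \eqref{1.14} together with the $L^2$ bound \eqref{1.13}; but more cleanly, since $-\Re\lambda$ is large, I would instead use the $W^{2,q}$ resolvent machinery as in Lemma \ref{lemres}: writing $w_0=R_{V_1}(\lambda)(f)$ with $f=(V_2-V_1)u_2$, and proceeding exactly as in the proof of Lemma \ref{lemres} (splitting into real and imaginary parts, applying \eqref{ae1}), one gets $\|w_0\|_{W^{2,q}(\Omega)}\le \mathbf{c}\big(|\Im\lambda|\|w_0\|_{L^2(\Omega)}+\|w_0\|_{L^2(\Omega)}+\|f\|_{L^q(\Omega)}\big)$ where $\|f\|_{L^q(\Omega)}\le\|W_0\|_{L^{2r/\cdots}}$— here the point of the class $\mathscr{W}$ (with $W_0\in L^n$) is precisely that $\|(V_2-V_1)u_2\|_{L^q(\Omega)}\le\|W_0\|_{L^{n/2}(\Omega)}\|u_2\|_{L^{p'}(\Omega)}$ by Hölder ($1/q=2/n+1/p'$), and $\|u_2\|_{L^{p'}(\Omega)}\le\mathbf{c}\|\varphi\|$ by \eqref{3.6.1}. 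Combined with the $L^2$ bound $\|w_0\|_{L^2(\Omega)}\le\sqrt{2}|\Re\lambda|^{-1/2}\|f\|_{L^2(\Omega)}$ from \eqref{re1} — or rather, to avoid needing $f\in L^2$, one should keep everything on the $W^{2,q}$ side and use that $|\Im\lambda||\Re\lambda|^{-1/2}\le 1$ on $\Pi_2$ — this yields \eqref{e7.r} and \eqref{e7.1.r} for $j=0$ with the trivial decay $|\Re\lambda|^{0}$, $|\Re\lambda|^{-1/2}$ respectively.

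For the inductive step, I would differentiate the defining relation \eqref{d}. From $(-\Delta+V_i-\lambda)u_i^{(j)}=ju_i^{(j-1)}$ with $\gamma_0 u_i^{(j)}=0$, subtracting gives
\[
(-\Delta+V_1-\lambda)w_j=(V_2-V_1)u_2^{(j)}+jw_{j-1}\quad\text{in }\Omega,\qquad \gamma_0 w_j=0,
\]
so $w_j=R_{V_1}(\lambda)\big((V_2-V_1)u_2^{(j)}\big)+jR_{V_1}(\lambda)(w_{j-1})$. For the first term I would repeat the $j=0$ argument but now using Lemma \ref{lem1.1}: $\|(V_2-V_1)u_2^{(j)}\|_{L^q(\Omega)}\le\|W_0\|_{L^{n/2}(\Omega)}\|u_2^{(j)}\|_{L^{p'}(\Omega)}\le\mathbf{c}_j|\Re\lambda|^{-j/2}\|\varphi\|$ by \eqref{eq6.r.1}, and then the resolvent-in-$W^{2,q}$ estimate (with the extra $|\Re\lambda|^{-1/2}$ gain in $L^2$ from \eqref{re1}) contributes a term bounded by $\mathbf{c}_j|\Re\lambda|^{-j/2}\|\varphi\|$ in $W^{2,q}$ and $\mathbf{c}_j|\Re\lambda|^{-(j+1)/2}\|\varphi\|$ in $L^2$. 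For the second term $jR_{V_1}(\lambda)(w_{j-1})$, I would invoke the induction hypothesis $\|w_{j-1}\|_{W^{2,q}(\Omega)}\le\mathbf{c}_{j-1}|\Re\lambda|^{-(j-1)/2}\|\varphi\|$ and $\|w_{j-1}\|_{L^2(\Omega)}\le\mathbf{c}_{j-1}|\Re\lambda|^{-j/2}\|\varphi\|$, apply \eqref{re1}, \eqref{re2} and \eqref{3.1} to $R_{V_1}(\lambda)(w_{j-1})$, gaining an extra factor $|\Re\lambda|^{-1/2}$ — exactly the mechanism already used in Lemma \ref{lem1.1} — so this term is $\mathbf{c}_j|\Re\lambda|^{-j/2}\|\varphi\|$ in $W^{2,q}$ and $\mathbf{c}_j|\Re\lambda|^{-(j+1)/2}\|\varphi\|$ in $L^2$. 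Adding the two contributions closes the induction.

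The main obstacle is bookkeeping the interplay between the three norms ($W^{2,q}$, $L^2$, $L^{p'}$) and the loss/gain of powers of $|\Re\lambda|$: the resolvent $R_{V_1}(\lambda)$ applied to an $L^2$ datum gains $|\Re\lambda|^{-1/2}$ in $L^2$ but only controls $W^{2,q}$ up to the factor $|\Im\lambda|\|\cdot\|_{L^2}$, which on $\Pi_2$ is absorbed because $|\Im\lambda||\Re\lambda|^{-1/2}\le1$ — one must make sure this constraint is exactly what is used each time the $W^{2,q}$ estimate appears. A subtler point is that $(V_2-V_1)u_2^{(j)}$ is a priori only in $L^q(\Omega)$ (not $L^2(\Omega)$), since $W_0\in L^{n/2}$ only pairs $L^{p'}$ down to $L^q$; hence the resolvent-in-$W^{2,q}$ estimate must be run with an $L^q$ source term, i.e. one re-derives the Lemma \ref{lemres}-type bound $\|R_{V_1}(\lambda)f\|_{W^{2,q}(\Omega)}\le\mathbf{c}(|\Im\lambda|\|R_{V_1}(\lambda)f\|_{L^2(\Omega)}+\|R_{V_1}(\lambda)f\|_{L^2(\Omega)}+\|f\|_{L^q(\Omega)})$ directly from \eqref{ae1}, and then needs a separate $L^2$ bound on $R_{V_1}(\lambda)f$ for $f\in L^q$; this is obtained by the same Green's-formula computation as in Lemma \ref{lem4.1}/Lemma \ref{lemma1}, estimating $\int_\Omega f\overline{u}\,dx\le\|f\|_{L^q(\Omega)}\|u\|_{L^{q'}(\Omega)}\le\mathbf{c}\|f\|_{L^q(\Omega)}\|u\|_{W^{2,q}(\Omega)}$ and absorbing, which gives $\|R_{V_1}(\lambda)f\|_{L^2(\Omega)}\le\mathbf{c}|\Re\lambda|^{-1/2}\|f\|_{L^q(\Omega)}$ once $-\Re\lambda$ is large enough. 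Everything else is routine propagation of the induction hypothesis.
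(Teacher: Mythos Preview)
Your inductive strategy—subtract the two BVPs, write $w_j=R_{V_1}(\lambda)\big((V_2-V_1)u_{V_2}^{(j)}(\lambda)(\varphi)+jw_{j-1}\big)$, and propagate the decay using Lemma \ref{lem1.1}—is exactly the paper's approach. However, you have misread the integrability of $W_0$: by definition of $\mathscr{W}$, one has $W_0\in L^n(\Omega)$, \emph{not} merely $L^{n/2}(\Omega)$. This is precisely the point of introducing the class $\mathscr{W}$, and it makes the argument much shorter than what you propose. Since $1/n+1/p'=1/2$ (recall $p'=2n/(n-2)$), H\"older gives
\[
\|(V_2-V_1)u_{V_2}^{(j)}(\lambda)(\varphi)\|_{L^2(\Omega)}
\le \|W_0\|_{L^n(\Omega)}\,\|u_{V_2}^{(j)}(\lambda)(\varphi)\|_{L^{p'}(\Omega)}
\le \mathbf{c}_j|\Re\lambda|^{-j/2}\|\varphi\|
\]
by \eqref{eq6.r.1}. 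So the source term \emph{is} in $L^2(\Omega)$, and \eqref{3.1} and \eqref{re1} apply directly; there is no need for an $L^q$-source variant of Lemma \ref{lemres}. Your assertion that ``$\|(V_2-V_1)u_2\|_{L^2(\Omega)}$ is not directly controlled'' is therefore incorrect, and several of the H\"older identities you quote are off: ``$1/p=1/n+1/p'$'' is false (the right-hand side equals $1/2$), and ``$1/q=2/n+1/p'$'' is false (the right-hand side equals $1/p$). Once you use $W_0\in L^n$, the whole detour through $L^q$-sources and the absorption argument you sketch at the end becomes unnecessary, and the induction closes in two lines per step, exactly as in the paper.
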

 
 \begin{proof}
 Clearly, $u(\lambda):=u_{V_1}(\lambda)(\varphi)- u_{V_2}(\lambda)(\varphi)\in H_0^1(\Omega)$ satisfies
\begin{equation}\label{e7.1}
(-\Delta +V_1-\lambda)u(\lambda)=(V_2-V_1)u_{V_2}(\lambda)(\varphi).
\end{equation}
That is $u(\lambda)=R_{V_1}(\lambda)((V_2-V_1)u_{V_2}(\lambda)(\varphi))$. In light of \eqref{3.1}, we obtain
\begin{align*}
\|u(\lambda)\|_{W^{2,q}(\Omega)}&\le \mathbf{c}\|(V_2-V_1)u_{V_2}(\lambda)(\varphi)\|_{L^2(\Omega)} 
\\
&\le  \mathbf{c}\|V_2-V_1\|_{L^n(\Omega)}\|u_{V_2}(\lambda)(\varphi)\|_{L^{p'}(\Omega)}
\\
&\le \mathbf{c}\|u_{V_2}(\lambda)(\varphi)\|_{L^{p'}(\Omega)}.
\end{align*}
This and \eqref{3.6.1} yield
\[
\|u(\lambda)\|_{W^{2,q}(\Omega)} \le \mathbf{c}\|\varphi\|.
\]

Similarly, we have
\begin{align*}
\|u(\lambda)\|_{L^2(\Omega)}&\le \mathbf{c}|\Re \lambda|^{-1/2}\|(V_2-V_1)u_{V_2}(\lambda)(\varphi)\|_{L^2(\Omega)}
\\
&\le  \mathbf{c}|\Re \lambda|^{-1/2}\|u_{V_2}(\lambda)(\varphi)\|_{L^{p'}(\Omega)}
\end{align*}
and then, again by \eqref{3.6.1},
\[
\|u(\lambda)\|_{L^2(\Omega)}\le \mathbf{c} |\Re\lambda|^{-1/2}\|\varphi\|.
\]

By taking the derivative with respect to $\lambda$ of each side \eqref{e7.1}, we find
\[
(-\Delta +V_1-\lambda)u^{(1)}(\lambda)=u(\lambda)+(V_2-V_1)u_{V_2}^{(1)}(\lambda)(\varphi).
\]
By proceeding as above, we obtain
\[
\|u^{(1)}(\lambda)\|_{W^{2,q}(\Omega)}\le \mathbf{c} |\Re\lambda|^{-1/2}\|\varphi\|
\]
and 
\[
\|u^{(1)}(\lambda)\|_{L^2(\Omega)}\le  \mathbf{c} |\Re\lambda|^{-1/2-1/2}\|\varphi\|.
\]
By using 
\[
(-\Delta +V_1-\lambda)u^{(j)}(\lambda)=ju^{(j-1)}(\lambda)+(V_2-V_1)u_{V_2}^{(j)}(\lambda)(\varphi),
\]
and an induction in $j$, $j\ge 1$, we get similarly as above
\[
\|u^{(j)}(\lambda)\|_{W^{2,q}(\Omega)}\le \mathbf{c}_j |\Re\lambda|^{-j(1/2)}\|\varphi\|
\]
and
\[
\|u^{(j)}(\lambda)\|_{L^2(\Omega)}\le \mathbf{c}_j |\Re\lambda|^{-(j+1)(1/2)}\|\varphi\|.
\]
The proof is then complete.
\end{proof}

We have from \cite[Theorem 1.4.3.3]{Gr} the following interpolation inequality
\begin{equation}\label{ii}
\|w\|_{W^{t,q}(\Omega)}\le c\|w\|_{W^{2,q}(\Omega)}^{t/2}\|w\|_{L^q(\Omega)}^{1-t/2},
\end{equation}
where $c=c(n,\Omega,t)>0$ is a constant.

Recall that $\sigma=12/(2-t)$ and define
\begin{equation}\label{pi+}
\Pi=\left\{\lambda =-\tau^\sigma +2i\tau\in \Pi_2 \right\} .
\end{equation}

By combining \eqref{e7.r}, \eqref{e7.1.r} and \eqref{ii}, and using that the trace map $w\in W^{t,q}(\Omega)\mapsto \gamma_1w\in L^q$ is bounded, we obtain

 \begin{corollary}\label{corDN1}
Let $j\ge 0$ be an integer, $(V_1,V_2)\in \mathscr{W}$, $\lambda=-\tau^\sigma +2i\tau \in  \Pi$ and $\varphi \in \mathcal{B}$. Then we have
\begin{equation}\label{DN1}
\|\Lambda_{V_1}^{(j)}(\lambda)(\varphi)- \Lambda_{V_2}^{(j)}(\lambda)(\varphi)\|_{L^q(\Gamma)}\le  \mathbf{\tilde{c}}\tau^{-3-j\sigma/2}\|\varphi\|,
\end{equation}
where $\mathbf{\tilde{c}}=\mathbf{\tilde{c}}(n,\Omega,V_0,W_0,t,j)$ is a constant.
\end{corollary}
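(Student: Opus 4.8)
The plan is to chain together the two estimates of Lemma \ref{lem1.2}, the interpolation inequality \eqref{ii}, the boundedness of the trace map $\gamma_1 : W^{t,q}(\Omega)\to L^q(\Gamma)$, and the explicit parametrization $\lambda=-\tau^\sigma+2i\tau$ of $\Pi$. The starting observation is that for $(V_1,V_2)\in\mathscr{W}$, $\lambda\in\Pi_2$ and $\varphi\in\mathcal{B}$ we have
\[
\Lambda_{V_1}^{(j)}(\lambda)(\varphi)-\Lambda_{V_2}^{(j)}(\lambda)(\varphi)=\gamma_1\bigl(u_{V_1}^{(j)}(\lambda)(\varphi)-u_{V_2}^{(j)}(\lambda)(\varphi)\bigr),
\]
since the DN maps are defined as $\gamma_1$ applied to the corresponding solutions and differentiation in $\lambda$ commutes with $\gamma_1$. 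Writing $w:=u_{V_1}^{(j)}(\lambda)(\varphi)-u_{V_2}^{(j)}(\lambda)(\varphi)$, I would apply \eqref{ii} to $w$ to get $\|w\|_{W^{t,q}(\Omega)}\le c\,\|w\|_{W^{2,q}(\Omega)}^{t/2}\|w\|_{L^q(\Omega)}^{1-t/2}$, and then bound $\|w\|_{L^q(\Omega)}\le c\,\|w\|_{L^2(\Omega)}$ since $q\le 2$ and $\Omega$ is bounded.

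Next I would insert the two bounds from Lemma \ref{lem1.2}: $\|w\|_{W^{2,q}(\Omega)}\le \mathbf{c}_j|\Re\lambda|^{-j/2}\|\varphi\|$ and $\|w\|_{L^2(\Omega)}\le \mathbf{c}_j|\Re\lambda|^{-(j+1)/2}\|\varphi\|$. This yields
\[
\|w\|_{W^{t,q}(\Omega)}\le \mathbf{c}_j\,|\Re\lambda|^{-\frac{t}{2}\cdot\frac{j}{2}}\,|\Re\lambda|^{-(1-\frac{t}{2})\cdot\frac{j+1}{2}}\,\|\varphi\|
=\mathbf{c}_j\,|\Re\lambda|^{-\frac{j}{2}-\frac{1}{2}(1-\frac{t}{2})}\,\|\varphi\|,
\]
after the exponents combine: the $W^{2,q}$ part contributes $\tfrac{t}{2}\cdot\tfrac{j}{2}$, the $L^2$ part contributes $(1-\tfrac{t}{2})\cdot\tfrac{j+1}{2}$, and their sum simplifies to $\tfrac{j}{2}+\tfrac12(1-\tfrac{t}{2})=\tfrac{j}{2}+\tfrac{2-t}{4}$. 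Finally, applying the boundedness of $\gamma_1 : W^{t,q}(\Omega)\to L^q(\Gamma)$ (which requires $t>1/q+1$, hence the hypothesis $t\in(1+1/q,2)$ entering through the definition of $\Pi_2\subset\Pi_1$) gives the same bound for $\|\Lambda_{V_1}^{(j)}(\lambda)(\varphi)-\Lambda_{V_2}^{(j)}(\lambda)(\varphi)\|_{L^q(\Gamma)}$.

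It remains only to substitute $\lambda=-\tau^\sigma+2i\tau$, so $|\Re\lambda|=\tau^\sigma$, and use $\sigma=12/(2-t)$. Then
\[
|\Re\lambda|^{-\frac{j}{2}-\frac{2-t}{4}}=\tau^{-\sigma(\frac{j}{2}+\frac{2-t}{4})}=\tau^{-\frac{j\sigma}{2}-\frac{\sigma(2-t)}{4}}=\tau^{-\frac{j\sigma}{2}-3},
\]
since $\sigma(2-t)/4=\tfrac14\cdot 12=3$. This is exactly \eqref{DN1}, with $\mathbf{\tilde c}=\mathbf{\tilde c}(n,\Omega,V_0,W_0,t,j)$ absorbing all the constants from \eqref{ii}, the $L^q$–$L^2$ embedding, Lemma \ref{lem1.2}, and the trace bound. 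The one point that needs care, rather than being a genuine obstacle, is making sure the interpolation exponent bookkeeping is done correctly and that the choice of $\sigma$ is precisely what makes the $\tau$-independent term in the exponent equal to $-3$; everything else is a direct concatenation of results already established in the excerpt.
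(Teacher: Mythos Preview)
Your proposal is correct and follows exactly the approach the paper indicates just before stating the corollary: combine the two estimates \eqref{e7.r} and \eqref{e7.1.r} of Lemma \ref{lem1.2} via the interpolation inequality \eqref{ii}, apply the boundedness of the trace $\gamma_1:W^{t,q}(\Omega)\to L^q(\Gamma)$, and substitute $|\Re\lambda|=\tau^\sigma$ with $\sigma=12/(2-t)$. Your exponent bookkeeping is accurate, including the key identity $\sigma(2-t)/4=3$ that produces the $\tau^{-3}$ factor.
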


\section{Proof of Theorem \ref{mainthm1}}

We recall for convenience that we have for all $(V_1,V_2)\in \mathscr{W}_+$ 
\[
\mathscr{D}_+(V_1,V_2):=\sum_{k\ge 1}k^{-2/n}\left[ |\lambda_{V_1}^k-\lambda_{V_2}^k|+\|\phi_{V_2}^k-\phi_{V_1}^k\|_{W^{2,q}(\Omega)}\right]<\infty ,
\]
implying that
\[
\mathscr{D}(V_1,V_2):=\sum_{k\ge 1}k^{-2/n}\left[ |\lambda_{V_1}^k-\lambda_{V_2}^k|+\|\psi_{V_2}^k-\psi_{V_1}^k\|_{L^q(\Gamma)}\right]<\infty ,
\]
Recall also that $\mathcal{B}:=W^{2-1/p,p}(\Gamma)\cap L^{q'}(\Gamma)$ is equipped with the norm
\[
\|\varphi\|=\|\varphi\|_{W^{2-1/p,p}(\Gamma)}+\|\varphi\|_{L^{q'}(\Gamma)},\quad \varphi \in \mathcal{B}.
\]

In the remaining part of this section, $\mathbf{c}=\mathbf{c}(n,\Omega,V_0,W_0,t)>0$ will denote a generic constant. 

Before proceeding to the proof of Theorem \ref{mainthm1}, we will first prove the following intermediate result. Define
\[
\Sigma_{\tau^\ast}=\{\lambda_\tau=(\tau+i)^2\in \Sigma;\; \tau\ge \tau^\ast\},\quad \tau^\ast \ge 2.
\]

\begin{proposition}\label{promt1}
Let $(V_1,V_2)\in \mathscr{W}_+$, $\varphi\in \mathcal{B}$. There exists $\tau^\ast=\tau^\ast (n,\Omega,V_0)$ such that for all $\lambda_\tau=(\tau+i)^2\in \Sigma_{\tau^\ast}$ we have
\begin{equation}\label{2.13}
 \|\Lambda_{V_1}(\lambda_\tau)(\varphi)-\Lambda_{V_1}(\lambda_\tau)(\varphi)\|\le \mathbf{c}(\tau^{-3} 
+\tau^{\sigma+n+2}\mathscr{D}(V_1,V_2))\|\varphi\|. 
\end{equation}
\end{proposition}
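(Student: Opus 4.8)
The plan is to work from the spectral representation of the Dirichlet-to-Neumann map. Applying the bounded trace $\gamma_1$ term by term to the expansion \eqref{1.36} of $u_V(\lambda)(\varphi)$ (with $j=0$), which in fact converges in $W^{2,q}(\Omega)$ by the $\mathscr{S}$-argument of the analyticity subsection, one gets, for $V\in\mathscr{V}$ and $\lambda\in\rho(A_V)$,
\[
\Lambda_V(\lambda)(\varphi)=-\sum_{k\ge 1}\frac{\langle\varphi,\psi_V^k\rangle}{\lambda_V^k-\lambda}\,\psi_V^k\qquad\text{in }L^q(\Gamma),
\]
and we set $E(\lambda):=\Lambda_{V_1}(\lambda)(\varphi)-\Lambda_{V_2}(\lambda)(\varphi)$. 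Three elementary facts will be used throughout: by \eqref{1.33} and Proposition \ref{pro1}, $\|\psi_V^k\|_{L^q(\Gamma)}\le\mathbf{c}(1+|\lambda_V^k|)\le\mathbf{c}k^{2/n}$, hence also $|\langle\varphi,\psi_V^k\rangle|\le\|\varphi\|\,\|\psi_V^k\|_{L^q(\Gamma)}\le\mathbf{c}k^{2/n}\|\varphi\|$; evaluating \eqref{1.35} together with \eqref{3.6} at a fixed non-real point with very negative real part gives the square-summability $\sum_{k\ge 1}|\langle\varphi,\psi_V^k\rangle|^2(1+\lambda_V^k)^{-2}\le\mathbf{c}\|\varphi\|^2$ (this is what makes the series below legitimate); and Proposition \ref{pro1} gives $\lambda_V^k\ge\mathfrak{c}_0^{-1}k^{2/n}-\mathfrak{c}_1$.

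Choose $\tau^\ast=\tau^\ast(n,\Omega,V_0)$ so that for $\tau\ge\tau^\ast$ the point $\mu:=-\tau^\sigma+2i\tau$ lies in $\Pi$ (this only requires $\tau^\sigma\ge\max(\lambda_\ast,\mu_\ast)$ and $2\tau^{1-\sigma/2}\le 1$). Corollary \ref{corDN1} with $j=0$ then gives $\|E(\mu)\|_{L^q(\Gamma)}\le\mathbf{c}\tau^{-3}\|\varphi\|$, which accounts for the first term of \eqref{2.13}. The rest of the argument transfers this bound to $\lambda_\tau=(\tau+i)^2$ along the spectral series. Since $\Im\lambda_\tau=\Im\mu=2\tau$, we have $\lambda_\tau-\mu=\tau^\sigma+\tau^2-1$, and the identity $(\lambda_V^k-\lambda_\tau)^{-1}-(\lambda_V^k-\mu)^{-1}=(\lambda_\tau-\mu)\big[(\lambda_V^k-\lambda_\tau)(\lambda_V^k-\mu)\big]^{-1}$ yields, after summation (the new series converging in $L^q(\Gamma)$ since it is the trace of $(\lambda_\tau-\mu)^{-1}\big(u_V(\lambda_\tau)(\varphi)-u_V(\mu)(\varphi)\big)\in W^{2,q}(\Omega)$, again via the $\mathscr{S}$-argument and the square-summability above),
\[
E(\lambda_\tau)=E(\mu)+(\lambda_\tau-\mu)\big(G_1-G_2\big),\qquad G_V:=\sum_{k\ge 1}\frac{\langle\varphi,\psi_V^k\rangle}{(\lambda_V^k-\lambda_\tau)(\lambda_V^k-\mu)}\,\psi_V^k .
\]

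It then remains to bound $|\lambda_\tau-\mu|\,\|G_1-G_2\|_{L^q(\Gamma)}$ by $\mathbf{c}\tau^{\sigma+n+2}\mathscr{D}(V_1,V_2)\|\varphi\|$. Writing $a_V^k=\langle\varphi,\psi_V^k\rangle$ and $D_V^k=(\lambda_V^k-\lambda_\tau)(\lambda_V^k-\mu)$, one splits each term as
\[
\frac{a_1^k\psi_{V_1}^k}{D_1^k}-\frac{a_2^k\psi_{V_2}^k}{D_2^k}=\frac{(a_1^k-a_2^k)\psi_{V_1}^k}{D_1^k}+\frac{a_2^k(\psi_{V_1}^k-\psi_{V_2}^k)}{D_1^k}+a_2^k\psi_{V_2}^k\,\frac{D_2^k-D_1^k}{D_1^kD_2^k},
\]
using $D_2^k-D_1^k=(\lambda_{V_2}^k-\lambda_{V_1}^k)(\lambda_{V_1}^k+\lambda_{V_2}^k-\lambda_\tau-\mu)$. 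The denominators obey $|D_V^k|\ge\mathbf{c}\,|\Im\lambda_\tau|\,|\Re\mu|\ge\mathbf{c}\tau^{\sigma+1}$ for every $k$, and $|D_V^k|\ge\mathbf{c}\,\lambda_V^k(\lambda_V^k+\tau^\sigma)\ge\mathbf{c}k^{4/n}$ as soon as $\lambda_V^k\ge 2|\Re\lambda_\tau|$, i.e.\ for $k>N$ with $N:=\lceil\mathbf{c}\tau^n\rceil$ (by Proposition \ref{pro1}). Split $\sum_{k\ge 1}=\sum_{k\le N}+\sum_{k>N}$. On the low block $k\le N$ one inserts $\|\psi_V^k\|_{L^q(\Gamma)}\le\mathbf{c}k^{2/n}$, $|a_V^k|\le\mathbf{c}k^{2/n}\|\varphi\|$, $|D_2^k-D_1^k|\le\mathbf{c}|\lambda_{V_1}^k-\lambda_{V_2}^k|\tau^\sigma$ and the lower bound $|D_V^k|\ge\mathbf{c}\tau^{\sigma+1}$, together with the elementary inequality $\sum_{k\le N}k^{2\ell/n}c_k\le N^{2\ell/n}\mathscr{D}(V_1,V_2)$, where $c_k:=k^{-2/n}\big(|\lambda_{V_1}^k-\lambda_{V_2}^k|+\|\psi_{V_1}^k-\psi_{V_2}^k\|_{L^q(\Gamma)}\big)$; each of the three pieces is then a power of $\tau$ (built from $N^{2\ell/n}\sim\tau^{2\ell}$ with $\ell\le 3$ and the $\tau^\sigma$ coming from $\mu$) times $\mathscr{D}(V_1,V_2)\|\varphi\|$. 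On the tail $k>N$ one uses instead $|D_V^k|\ge\mathbf{c}k^{4/n}$ and $|D_V^k|\ge\mathbf{c}\lambda_V^k(\lambda_V^k+\tau^\sigma)$; the three pieces are then dominated directly by $\mathbf{c}\,c_k\|\varphi\|$, whose sum is $\le\mathbf{c}\mathscr{D}(V_1,V_2)\|\varphi\|$. Collecting the two blocks, multiplying by $|\lambda_\tau-\mu|\le 2\tau^\sigma$, and adding $\|E(\mu)\|_{L^q(\Gamma)}\le\mathbf{c}\tau^{-3}\|\varphi\|$ gives \eqref{2.13}.

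The main obstacle is not any single estimate but the combination of two points. First, the series defining $\Lambda_V(\lambda_\tau)(\varphi)$ and $G_V$ are only conditionally convergent with the crude termwise bounds, so the term-by-term algebra above must be justified by identifying $G_V$ as the trace of a genuine $W^{2,q}(\Omega)$ function (a difference quotient of $u_V(\cdot)(\varphi)$) and passing to the limit along partial sums; this is where the square-summability $\sum_k|\langle\varphi,\psi_V^k\rangle|^2(1+\lambda_V^k)^{-2}<\infty$ and the $W^{2,q}$-convergence from the analyticity subsection are indispensable. Second, the $\tau$-bookkeeping has to separate the ``resonant block'' $\{k:\lambda_V^k\approx\Re\lambda_\tau\}$—where $|\lambda_V^k-\lambda_\tau|$ can only be bounded below by $|\Im\lambda_\tau|=2\tau$ and which, by Weyl's law, contains $\sim\tau^n$ indices, thereby fixing the cutoff $N\sim\tau^n$—from the tail, where the extra decay $|D_V^k|\gtrsim\lambda_V^k(\lambda_V^k+\tau^\sigma)$ lets the remainder be summed against $\mathscr{D}(V_1,V_2)$ with no further loss in $\tau$. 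The exponent $\sigma+n+2$ in \eqref{2.13} is exactly the outcome of carrying this split through with the above (deliberately unrefined) bounds.
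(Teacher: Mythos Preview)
Your approach is essentially the paper's: the paper writes
\[
u_{V_j}(\lambda_\tau)(\varphi)=u_{V_j}(\mu)(\varphi)+(\lambda_\tau-\mu)\int_0^1 u_{V_j}^{(1)}(\mu_\tau^s)(\varphi)\,ds,
\]
with $\mu_\tau^s=\mu+s(\lambda_\tau-\mu)$ and $\mu=-\tau^\sigma+2i\tau$, applies Corollary~\ref{corDN1} at $s=0$, and then expands $u_{V_1}^{(1)}(\mu_\tau^s)-u_{V_2}^{(1)}(\mu_\tau^s)$ in eigenfunctions with the same low/high split $k<k_\tau\sim\tau^n$, $k\ge k_\tau$, and the same three-term telescoping. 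Since $(\lambda_\tau-\mu)\int_0^1(\lambda_V^k-\mu_\tau^s)^{-2}\,ds=[(\lambda_V^k-\lambda_\tau)(\lambda_V^k-\mu)]^{-1}(\lambda_\tau-\mu)$, your $G_V$ is literally the integral the paper computes; the two arguments differ only cosmetically.

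There is one soft spot in your justification. You say the termwise identity in $L^q(\Gamma)$ follows from ``the $W^{2,q}$-convergence from the analyticity subsection''. That subsection shows $\lambda\mapsto u_V(\lambda)(\varphi)$ is holomorphic with values in $W^{2,q}(\Omega)$, but it does \emph{not} give convergence of the eigenfunction partial sums in $W^{2,q}(\Omega)$; the series \eqref{1.36} is only asserted in $L^2(\Omega)$, where $\gamma_1$ is not continuous. What actually closes the loop is the hypothesis $\mathscr{D}_+(V_1,V_2)<\infty$: at the $\phi$-level your second piece $\sum_k a_2^k(\phi_{V_1}^k-\phi_{V_2}^k)/D_1^k$ has terms of $W^{2,q}$-norm $\le\mathbf{c}\,k^{-2/n}\|\phi_{V_1}^k-\phi_{V_2}^k\|_{W^{2,q}}\|\varphi\|$, which is summable precisely because $\mathscr{D}_+<\infty$; pieces one and three are summable in $W^{2,q}$ using only $\mathscr{D}<\infty$. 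Once all three $\phi$-series converge in $W^{2,q}(\Omega)$, their sum coincides (by uniqueness of $L^2$-limits) with $h_{V_1}-h_{V_2}$ and you may apply $\gamma_1$ termwise. This is exactly how the paper handles its $I_3$; you should cite $\mathscr{D}_+<\infty$ at that step rather than the analyticity subsection.
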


\begin{proof}
Define for  $s\in [0,1]$ and $\tau \ge 2$
\[
\mu_\tau^s=-\tau^\sigma+s[\tau^\sigma+\tau^2-1]+2i\tau
\] 
in such a way that $\mu_\tau^0=-\tau^\sigma+2i\tau\in \Pi$, where $\Pi$ is given by \eqref{pi+}, and $\mu_\tau^1=\lambda_\tau=(\tau+i)^2\in \Sigma$.

Let $V_1,V_2\in \mathscr{V}$. By applying Taylor's formula, we get
 \begin{equation}\label{2.4}
 u_{V_j}(\lambda_\tau)=u_{V_j}(\mu_\tau^0)(\varphi)+(\tau^\sigma+\tau^2-1)\int_0^1u_{V_j}^{(1)}(\mu_\tau^s)(\varphi)ds,\quad j=1,2.
 \end{equation}
 Hence \eqref{DN1} implies
 \begin{align}
& \mathbf{c} \| \Lambda_{V_1}(\lambda_\tau)- \Lambda_{V_2}(\lambda_\tau)\|_{L^q(\Gamma)}\le \tau^{-3}\|\varphi\| \label{xx}
 \\
&\hskip 3cm  + \tau ^\sigma\int_0^1\| \Lambda_{V_1}^{(1)}(\mu_\tau^s(\varphi))- \Lambda_{V_2}^{(1)}(\mu_\tau^s(\varphi))\|_{L^q(\Gamma)}ds.\nonumber
 \end{align}
 
 In light of \eqref{1.6}, we find  an integer $\ell_\ast=\ell_\ast(n,\Omega,V_0)>0$ and a constant $\mathbf{c}_0=(n,\Omega,V_0)>0$ such that
\begin{equation}\label{x1}
\lambda_V^k\ge \mathbf{c}_0k^{2/n},\quad V\in \mathscr{V},\; k\ge \ell_\ast.
\end{equation}
On the other hand,
\begin{equation}\label{x2}
-\Re \mu_\tau^s=\tau^\sigma -s[\tau^\sigma +\tau^2-1]\ge -\tau^2, \quad s\in [0,1].
\end{equation}
Let $k_\tau$ be the smallest integer so that $k_\tau^{2/n}\ge 2\mathbf{c}_0^{-1}\tau^2$. We then choose $\tau^\ast=\tau^\ast(n,\Omega, V_0) \ge 2$ so that $k_\tau \ge \ell_\ast$ for all $\tau \ge \tau^\ast$. Then, combining \eqref{x1} and \eqref{x2}, we obtain
\begin{equation}\label{x3}
\lambda_V^k/2-\Re \mu_\tau^s\ge 0,\quad s\in [0,1],\; V\in \mathscr{V},\; k\ge k_\tau.
\end{equation}

For $V=V_1$ or $V=V_2$ and $\lambda \in \mathbb{C}\setminus\mathbb{R}$,  we use the following decomposition $u_V(\lambda)(\varphi)=v_V(\lambda)(\varphi)+w_V(\lambda)(\varphi)$, where
\[
w_V(\lambda)(\varphi)=-\sum_{k< k_\tau}\frac{\langle \varphi, \psi_V^k\rangle}{\lambda_V^k-\lambda}\phi_k,\quad v_V(\lambda)(g)= -\sum_{k\ge k_\tau}\frac{\langle \varphi, \psi_V^k\rangle}{\lambda_V^k-\lambda}\phi_k.
\]
We decompose
\begin{align*}
&v_{V_1}^{(1)}(\mu_\tau^s)(\varphi)-v_{V_2}^{(1)}(\mu_\tau^s)(\varphi)= 
\\
&\hskip 2.5cm \sum_{k\ge k_\tau}\frac{\langle \varphi, \psi_{V_2}^k\rangle}{(\lambda_{V_2}^k-\mu_\tau^s)^2}\phi_{V_2}^k-\sum_{k\ge k_\tau }\frac{\langle \varphi, \phi_{V_1}^k\rangle}{(\lambda_{V_1}^k-\mu_\tau^s)^2}\phi_{V_1}^k.
\end{align*}
 in the form
\begin{equation}\label{2.5}
v_{V_1}^{(1)}(\mu_\tau^s)(\varphi)-v_{V_2}^{(1)}(\mu_\tau^s)(\varphi)=\sum_{j=1}^3I_j,
\end{equation}
where
\begin{align*}
&I_1=\sum_{k\ge k_\tau}\left(\frac{1}{(\lambda_{V_2}^k-\mu_\tau^s)^2}-\frac{1}{(\lambda_{V_1}^k-\mu_\tau^s)^2}\right)\langle \varphi, \psi_{V_2}^k\rangle\phi_{V_2}^k,
\\
&I_2=\sum_{k\ge  k_\tau}\frac{\langle \varphi, \psi_{V_1}^k-\psi_{V_2}^k\rangle}{(\lambda_{V_1}^k-\mu_\tau^s)^2}\phi_{V_2}^k,
\\
&I_3=\sum_{k\ge k_\tau}\frac{\langle \varphi, \psi_{V_1}^k\rangle}{(\lambda_{V_1}^k-\mu_\tau^s)^2}(\phi_{V_2}^k-\phi_{V_1}^k).
\end{align*}
By using the formula
\begin{align*}
&\frac{1}{(\lambda_{V_2}^k-\mu_\tau^s)^2}-\frac{1}{(\lambda_{V_1}^k-\mu_\tau^s)^2}
\\
&\hskip 1cm = \frac{(\lambda_{V_1}^k-\lambda_{V_2}^k)}{(\lambda_{V_1}^k-\mu_\tau^s)(\lambda_{V_2}^k-\mu_\tau^s)}\left( \frac{1}{\lambda_{V_2}^k-\mu_\tau^s}+\frac{1}{\lambda_{V_1}^k-\mu_\tau^s}\right),
\end{align*}
we obtain
\begin{align*}
\left|\frac{1}{(\lambda_{V_2}^k-\mu_\tau^s)^2}-\frac{1}{(\lambda_{V_1}^k-\mu_\tau^s)^2}\right| &\le \frac{8|\lambda_{V_1}^k-\lambda_{V_2}^k|}{(\lambda_{V_1}^k\lambda_{V_2}^k)^2}\left(\frac{1}{\lambda_{V_2}^k}+\frac{1}{\lambda_{V_1}^k}\right)
\\
&\le\mathbf{c} k^{-6/n} |\lambda_{V_1}^k-\lambda_{V_2}^k|.
\end{align*}
By \eqref{1.33} and the continuity of the trace map $w\in W^{2,q}(\Omega)\mapsto \gamma_1w\in L^q(\Gamma)$, we have 
\begin{equation}\label{eee}
\|\phi_{V_2}^k\|_{W^{2,q}(\Omega)}\le \mathbf{c}(1+|\lambda_{V_2}^k|),\quad \|\psi_{V_2}^k\|_{L^q(\Gamma)}\le \mathbf{c}(1+|\lambda_{V_2}^k|),\quad k\ge 1.
\end{equation}
Hence, the series 
\[
\sum_{k\ge  k_\tau}\left(\frac{1}{(\lambda_{V_2}^k-\mu_\tau^s)^2}-\frac{1}{(\lambda_{V_1}^k-\mu_\tau^s)^2}\right)\langle \varphi, \psi_{V_2}^k\rangle\phi_{V_2}^k
\] 
converges in $W^{2,q}(\Omega)$ and 
\begin{equation}\label{2.11.1}
\| I_1\|_{W^{2,q}(\Omega)}\le \mathbf{c}\sum_{k\ge k_\tau} k^{-2/n} |\lambda_{V_1}^k-\lambda_{V_2}^k|\|\varphi\|.
\end{equation}
We proceed similarly as above to obtain
\begin{equation}\label{2.11.2}
\|I_2\|_{W^{2,q}(\Omega)}\le \mathbf{c}\sum_{k\ge k_\tau}k^{-2/n}\|\psi_{V_2}^k-\psi_{V_1}^k\|_{L^p(\Gamma)}\|\varphi\|.
\end{equation}
Then \eqref{2.11.1} and \eqref{2.11.2} imply
\begin{equation}\label{2.11.3}
\|\gamma_1I_1\|_{L^q(\Gamma)}+\|\gamma_1I_2\|_{L^q(\Gamma)}\le \mathbf{c}\mathscr{D}(V_1,V_2)\|\varphi\|.
\end{equation}

On the other hand, since $\mathscr{D}_+(V_1,V_2)<\infty$, we verify that the series \[\sum_{k\ge k_\tau}\frac{\langle \varphi, \psi_{V_1}^k\rangle}{(\lambda_{V_1}^k-\mu_\tau^s)^2}(\phi_{V_2}^k-\phi_{V_1}^k)\] converges in $W^{2,q}(\Omega)$. Whence
\[
\gamma_1I_3=\sum_{k\ge k_\tau}\frac{\langle \varphi, \psi_{V_1}^k\rangle}{(\lambda_{V_1}^k-\mu_\tau^s)^2}(\psi_{V_2}^k-\psi_{V_1}^k).
\]
As above, we derive from this identity 
\begin{equation}\label{2.11.4}
\|\gamma_1I_3\|_{L^q(\Gamma)}\le \mathbf{c}\mathscr{D}(V_1,V_2)\|\varphi\|.
\end{equation}

In light of \eqref{2.5}, a combination of \eqref{2.11.3} and \eqref{2.11.4} gives

\begin{equation}\label{2.12.0}
\|\gamma_1v_{V_1}^{(1)}(\mu_\tau^s)(\varphi)-\gamma_1v_{V_2}^{(1)}(\mu_\tau^s)(\varphi)\|_{L^q(\Gamma)}\le \mathbf{c}\mathscr{D}(V_1,V_2)\|\varphi\|.
\end{equation}

We proceed similarly to prove
\begin{align*}
&\mathbf{c}\|\gamma_1w_{V_1}^{(1)}(\mu_\tau^s)(\varphi)-\gamma_1w_{V_2}^{(1)}(\mu_\tau^s)(\varphi)\|_{L^q(\Gamma)}
\le \frac{1}{\tau^3}\sum_{k< k_\tau}(1+|\lambda_{V_2}^k|)^2|\lambda_{V_1}^k-\lambda_{V_2}^k|\|\varphi\|
\\
&\hskip 4cm +\frac{1}{\tau^2}\sum_{k< k_\tau}(1+|\lambda_{V_1}^k|+|\lambda_{V_2}^k|)\|\psi_{V_1}^k-\psi_{V_2}^k\|_{L^q(\Gamma)}\|\varphi\|.
\end{align*}
In light of the definition of $k_\tau$, we verify that 
\begin{equation}\label{2.12.1}
\|\gamma_1w_{V_1}^{(1)}(\mu_\tau^s)(\varphi)-\gamma_1w_{V_2}^{(1)}(\mu_\tau^s)(\varphi)\|_{L^q(\Gamma)}\le \mathbf{c}\tau^{n+2}\mathscr{D}(V_1,V_2)\|\varphi\|.
\end{equation}
Using  \eqref{2.12.0} and \eqref{2.12.1} in \eqref{xx}, we obtain
\[
 \| \Lambda_{V_1}(\lambda_\tau)- \Lambda_{V_2}(\lambda_\tau)\|_{L^q(\Gamma)}\le \mathbf{c}( \tau^{-3}+\tau^{\sigma +n+2}\mathscr{D}(V_1,V_2))\|\varphi\|.
\]
This is the expected inequality.
\end{proof}

\begin{remark}\label{rem1}
\rm{
In the previous proof we only use Corollary \ref{corDN1} with $j=0$. Corollary \ref{corDN1} also serves to explain why it is not possible, contrary to what we did in \cite{CS}, to use the higher-order Taylor formula to remove the condition $\mathscr{D}_+(V_1,V_2)<\infty$ in Theorem \ref{mainthm1}. For example, if we use the Taylor formula of order $1$, then in light of \eqref{DN1}, we obtain $(\tau^{-3}+\tau^{-3+\sigma/2})\|\varphi\|$ instead of $\tau^{-3}\|\varphi\|$. We can then see in the remaining part of the proof of Theorem \ref{mainthm1} below that this is not sufficient to control this term by a negative power of $\tau$ because we use functions $\varphi$ satisfying $\|\varphi \|=O(\tau^2)$.
}
\end{remark}

 We are now ready to complete the proof of Theorem \ref{mainthm1}. To this end, let $\varphi_{\lambda ,\omega}(x)=e^{i\sqrt{\lambda}\omega \cdot x}$, $\lambda \in \mathbb{C}\setminus (-\infty ,0]$, with the standard choice of the branch of the square root, and $\omega \in \mathbb{S}^{n-1}$. Define 
\[
S_V(\lambda ,\omega ,\theta )=\int_\Gamma \Lambda_V (\lambda )(\varphi_{\lambda ,\omega})\varphi_{\lambda ,-\theta}d\sigma ,\quad \lambda \in \rho(A_V)\setminus (-\infty ,0],\; \omega ,\; \theta \in \mathbb{S}^{n-1}.
\]
 Following \cite[Lemma 2.2]{Isozaki89} we have for all $\lambda \in \rho(A_V)\setminus (-\infty ,0]$ and $\omega ,\; \theta \in \mathbb{S}^{n-1}$
 \begin{align}
 &S_V(\lambda ,\omega ,\theta ) = -\frac{\lambda}{2}|\theta -\omega |^2\int_\Omega e^{-i\sqrt{\lambda}(\theta -\omega )\cdot x}dx \label{1.22}
 \\
&\hskip 3cm +\int_\Omega e^{-i\sqrt{\lambda}(\theta -\omega )\cdot x}Vdx-\int_\Omega R_V(\lambda )(V\varphi_{\lambda ,\omega})V\varphi_{\lambda ,-\theta}dx. \nonumber
 \end{align}

 We fix $\xi \in \mathbb{R}^n$ and $\eta \in \mathbb{S}^{n-1}$ such that $\eta \bot \xi$. For $\tau >1$, let
 \[
 \left\{
 \begin{array}{ll}
 &\theta _\tau = c_\tau \eta +(2\tau)^{-1}\xi,
 \\
& \omega _\tau = c_\tau \eta -(2\tau)^{-1}\xi,
\\
 &\sqrt{\lambda _\tau}=\tau +i .
 \end{array}
 \right.
 \]
 
 Let $(V_1,V_2)\in \mathscr{W}_+$ and set $V=(V_1-V_2)\chi_\Omega$ and $\lambda_\tau=(\tau+i)^2\in \Sigma_{\tau^\ast}$, where $\Sigma_{\tau^\ast}$ is as in Proposition \ref{promt1}. By using the formula \eqref{1.22}, we get
 \begin{align}
 &|\widehat{V}((1+i/\tau)\xi)|\le |S_{V_1}(\lambda_\tau ,\omega_\tau ,\theta_\tau )-S_{V_2}(\lambda_\tau ,\omega_\tau ,\theta_\tau )|\label{1.23}
 \\
 &\hskip .2cm+\left| \int_\Omega R_{V_1}(\lambda )(V_1\varphi_{\lambda ,\omega})V_1\varphi_{\lambda_\tau ,-\theta_\tau}dx\right|+
 \left| \int_\Omega R_{V_2}(\lambda )(V_2\varphi_{\lambda_\tau ,\omega_\tau})V_2\varphi_{\lambda ,-\theta}dx\right|.\nonumber
 \end{align}
 Here $\widehat{V}$ denotes the Laplace-Fourier transform of $V$.
  
 Let $r=p_{1/4}=4n/(2n+1)$ and $r'=p'_{1/4}=4n/(2n-1)$ ($p_\theta$ and $p'_\theta$ are defined in \eqref{theta}). For $W=V_1, V_2$ and $\zeta=\omega_\tau, -\theta_\tau$,  H\"older's inequality yields
 \[
 \|W\varphi_{\lambda_\tau,\zeta}\|_{L^r(\Omega )}\le \|W\|_{L^{n/2}(\Omega)}\|\varphi_{\lambda_\tau,\zeta}\|_{L^{\tilde{q}}(\Omega)},
 \]
where $\tilde{q}=4n/(2n-7)$. Thus, we have
 \[
 \|W\varphi_{\lambda_\tau,\zeta}\|_{L^r(\Omega )}\le \mathbf{c}.
 \]
which, in combination with \eqref{1.21}, gives for $j=1,2$
\begin{equation}\label{1.24}
\left| \int_\Omega R_{V_j}(\lambda )(V_j\varphi_{\lambda_\tau ,\omega_\tau})V_j\varphi_{\lambda ,-\theta}dx\right|\le \mathbf{c}\tau^{-1/2}. 
\end{equation}
Using \eqref{1.24} in \eqref{1.23}, we obtain
\begin{equation}\label{1.25}
|\widehat{V}((1+i/\tau)\xi)|\le |S_{V_1}(\lambda_\tau ,\omega_\tau ,\theta_\tau )-S_{V_2}(\lambda_\tau ,\omega_\tau ,\theta_\tau )|+\mathbf{c}\tau^{-1/2}.
\end{equation}
According to the mean-value theorem, we have
\[
|\widehat{V}(\xi)|\le |\widehat{V}((1+i/\tau)\xi)|+\tau^{-1}\sup_{0\le s\le 1}|\nabla \widehat{V}((1+is/\tau)\xi)\cdot \xi|.
\]
But we have for $1\le k\le n$
\[
|\partial_k \widehat{V}((1+i/\tau)\xi)((1+is/\tau)\xi)|=|\widehat{x_kV}((1+is/\tau)\xi)|\le e^{\mathbf{c}\tau^{-1}|\xi|}.
\]
Whence
\[
|\widehat{V}(\xi)|\le |\hat{V}((1+i/\tau)\xi)|+\mathbf{c}\tau^{-1}|\xi|e^{\mathbf{c}\tau^{-1}|\xi|}.
\]
This in \eqref{1.25} yields
\begin{equation}\label{1.26}
|\widehat{V}(\xi)|\le |S_{V_1}(\lambda_\tau ,\omega_\tau ,\theta_\tau )-S_{V_2}(\lambda_\tau ,\omega_\tau ,\theta_\tau )|+\mathbf{c}\tau^{-1/2}+ e^{\mathbf{c}\tau^{-1}|\xi|}.
\end{equation}

On the other hand, we have
\begin{align*}
&|S_{V_1}(\lambda_\tau ,\omega_\tau ,\theta_\tau )-S_{V_2}(\lambda_\tau ,\omega_\tau ,\theta_\tau )|
\\
&\hskip 2cm \le \|\Lambda_{V_1}(\lambda_\tau)(\varphi_{\lambda ,\omega})-\Lambda_{V_2}(\lambda_\tau)(\varphi_{\lambda ,\omega})\|_{L^q(\Gamma)}\|\varphi_{\lambda ,-\theta}\|_{L^{q'}(\Gamma)}.
\\
&\hskip 2cm \le \mathbf{c} \|\Lambda_{V_1}(\lambda_\tau)(\varphi_{\lambda ,\omega})-\Lambda_{V_2}(\lambda_\tau)(\varphi_{\lambda ,\omega})\|_{L^q(\Gamma)},
\end{align*}
which, in light of \eqref{2.13}, yields
\[
\mathbf{c}|S_{V_1}(\lambda_\tau ,\omega_\tau ,\theta_\tau )-S_{V_2}(\lambda_\tau ,\omega_\tau ,\theta_\tau )|\le \tau^{-1/2} +\tau^{\sigma+n+4}\mathscr{D}(V_1,V_2),
\]
where we used
\[
\|\varphi_{\lambda ,\omega}\|\le \mathbf{c}\tau^2.
\]
Therefore, we obtain from \eqref{1.26}
\begin{align}
&\int_{|\xi|\le \tau^\alpha}(1+|\xi|^2)^{-1}|\widehat{V}(\xi)|^2d\xi \le \mathbf{c}\tau^{2(\sigma+n+4)+n\alpha}\mathscr{D}(V_1,V_2)^2\label{1.27}
\\
&\hskip 5.5cm +\mathbf{c}\tau^{-1+n\alpha}+\mathbf{c}\tau^{n\alpha}e^{\mathbf{c}\tau^{-1+\alpha}},\nonumber
\end{align}
where $\alpha>0$ will be chosen later.

We verify that 
\begin{equation}\label{1.28}
\int_{|\xi|> \tau^\alpha}(1+|\xi|^2)^{-1}|\widehat{V}(\xi)|^2d\xi \le \tau^{-2\alpha}\int_{\mathbb{R}^n} |\widehat{V}(\xi)|^2dx.
\end{equation}
According to Parseval's identity, we have
\[
\int_{\mathbb{R}^n} |\widehat{V}(\xi)|^2dx=\int_{\mathbb{R}^n} |V(\xi)|^2dx=\|V_1-V_2\|_{L^2(\Omega)}\le \mathbf{c}
\]
and then \eqref{1.28} implies
\begin{equation}\label{1.29}
\int_{|\xi|> \tau^\alpha}(1+|\xi|^2)^{-1}|\widehat{V}(\xi)|^2d\xi \le \mathbf{c} \tau^{-2\alpha}.
\end{equation}

Putting together \eqref{1.27} and \eqref{1.29}, we get
\begin{align}
&\|V\|_{H^{-1}(\mathbb{R}^n)}^2 \le  \mathbf{c}\tau^{2(\sigma+n+4)+n\alpha}\mathscr{D}(V_1,V_2)^2\label{1.30}
\\
&\hskip 4.5cm +\mathbf{c}\tau^{-1+n\alpha}+\mathbf{c}\tau^{n\alpha}e^{\mathbf{c}\tau^{-1+\alpha}}+\mathbf{c} \tau^{-2\alpha}.\nonumber
\end{align}
Assume first that $0<\alpha <1$. In this case since 
\[
\mathbf{c}_\alpha:=\sup_{\tau \ge 1}\{\mathbf{c}\tau^{(n+2)\alpha}e^{\mathbf{c}\tau^{-1+\alpha}}\}<\infty,
\]
\eqref{1.30} implies
\begin{align}
&\|V\|_{H^{-1}(\mathbb{R}^n)}^2 \le   \mathbf{c}\tau^{2(\sigma+n+4)+n\alpha}\mathscr{D}(V_1,V_2)^2\label{1.31}
\\
&\hskip 4.5cm +\mathbf{c}\tau^{-1+n\alpha}+(\mathbf{c} +\mathbf{c}_\alpha)\tau^{-2\alpha}.\nonumber
\end{align}
By taking $\alpha=1/(n+2)$ in \eqref{1.31}, we obtain
\[
\|V\|_{H^{-1}(\mathbb{R}^n)} \le  \mathbf{c}\tau^{\sigma+n+4+n/(2n+4)}\mathscr{D}(V_1,V_2)+\mathbf{c}\tau^{-1/(n+2)}.
\]
This and the fact that $\|V_1-V_2\|_{H^{-1}(\Omega)}\le \|V\|_{H^{-1}(\mathbb{R}^n)}$ yield
\begin{equation}\label{1.32}
\|V_1-V_2\|_{H^{-1}(\Omega)} \le  \mathbf{c}\tau^{\sigma+n+4+n/(2n+4)}\mathscr{D}(V_1,V_2)+\mathbf{c}\tau^{-1/(n+2)}.
\end{equation}
By minimizing with respect to $\tau$, we obtain \eqref{mt1} from  \eqref{1.32}.

\section{Extension to the anisotropic case}

We briefly explain in this section how to modify the previous proof to obtain an extension of Theorem \ref{mainthm1} to the anisotropic case. We reuse the definitions and notations from the previous sections.

Suppose that $\Omega$ is $C^\infty$ smooth. Let $\mathbf{g}=(g_{k\ell})$ be a Riemannian metric $C^\infty$ in $\overline{\Omega}$. We assume that $\mathbf{g}$ is simple in $\Omega$, which means that $\Gamma$ is strictly convex with respect to the metric $\mathbf{g}$ and for all $x\in \overline{\Omega}$ the exponential map $\exp_x:\exp_x^{-1}(\overline{\Omega})\rightarrow \overline \Omega$ is a diffeomorphism. We note that $\mathbf{g}$ can be extended to a simple metric in $\Omega_1\Supset \Omega$.

All the results of the previous sections up to Proposition \ref{promt1} remain valid. We need to replace the functions $\varphi_{\lambda,\omega}$ by geometric optic solutions associated with the metric $\mathbf{g}$. 

In the following, $\Delta$ represents the Laplace-Beltrami operator associated with $\mathbf{g}$. Since $\mathbf{g}$ is simple, there is $\phi\in C^\infty(\overline{\Omega})$ a solution to the eikonal equation 
\begin{equation}\label{eikonal} 
|\nabla \phi|^2:=\sum_{k,\ell=1}^ng^{k\ell}\frac{\partial \phi}{\partial x_k}\frac{\partial \phi}{ \partial x_\ell}=1.
\end{equation} 
Here $(g^{k\ell}):=\mathbf{g}^{-1}$ is the inverse of the metric $\mathbf{g}$. Moreover, the transport equation
\begin{equation}\label{transport}
\sum_{k,\ell=1}^ng^{k\ell}\frac{\partial \phi}{\partial x_k} \frac{\partial a}{\partial x_\ell}+\frac{1}{2}(\Delta \phi) a=0
\end{equation}
admits a solution $a\in H^3(\Omega) $. These statements are obtained by slightly modifying the proof in the second part of \cite[Section 4]{BD}.
The function $\phi$ is generally called a phase and the function $a$ an amplitude.

In the following, $\phi$ will denote a fixed solution of the eikonal equation \eqref{eikonal} and $a,b\in H^3(M)$ will denote two solutions of the transport equation \eqref{transport}.

Let $A_0$ be the unbounded operator acting on $L^2(\Omega)$ as follows
\[
A_0u=-\Delta u,\quad u\in D(A_0):=H^2(\Omega)\cap H_0^1(\Omega).
\]

For $\zeta\in \{a,b\}$ and $\lambda_\tau=(\tau+i)^2\in \Sigma$, let 
\[
\varphi^{\pm}_{\zeta,\tau} :=e^{\pm \sqrt{\lambda_\tau}\phi }\zeta-(A_0-\lambda_\tau)^{-1}(e^{\pm \sqrt{\lambda_\tau}\phi }\Delta \zeta).
\]
We verify that $\varphi^{\pm}_{\zeta,\tau}\in H^2(\Omega)$ and $(\Delta +\lambda_\tau)\varphi^{\pm}_{\zeta,\tau}=0$.

In the following, $\mathbf{c}_0=\mathbf{c}_0(n,\Omega)>0$ will denote a generic constant.

If $\lambda_\tau \in \Sigma \subset \Pi_0$, then we have from \eqref{1.14.0}
\[
\|(A_0-\lambda_\tau)^{-1}(e^{\pm \sqrt{\lambda_\tau}\phi }\Delta \zeta)\|_{L^{p'}(\Omega)}\le \mathbf{c}_0\|\zeta\|_{H^2(\Omega)}.
\]
Whence
\begin{equation}\label{es1}
\|\varphi^{\pm}_{\zeta,\tau}\|_{L^{p'}(\Omega)}\le \mathbf{c}_0\|\zeta\|_{H^2(\Omega)}.
\end{equation}
Furthermore,  as $\varphi^{\pm}_{\zeta,\tau} =e^{\pm \sqrt{\lambda_\tau}\phi }\zeta$ on $\Gamma$, we obtain
\[
\|\varphi^{\pm}_{\zeta,\tau}\|_{L^{q'}(\Gamma)}\le \mathbf{c}_0\|\zeta\|_{L^{q'}(\Gamma)}.
\]
This inequality, combined with the fact that $H^3(\Omega)$ is continuously embedded in $W^{1,q'}(\Omega)$ (\cite[(7.30)]{GT} shows that $H^2(\Omega)$ in continuously embedded in $L^{q'}(\Omega)$) and the continuity of the trace map $w\in W^{1,q'}(\Omega)\mapsto w_{|\Gamma}\in L^{q'}(\Gamma)$, yields
\begin{equation}\label{es2}
\|\varphi^{\pm}_{\zeta,\tau}\|_{L^{q'}(\Gamma)}\le \mathbf{c}_0\|\zeta\|_{H^3(\Omega)}.
\end{equation} 
We use \eqref{es2} to obtain
\begin{equation}\label{es3}
\|\varphi^{\pm}_{\zeta,\tau}\|\le \mathbf{c}_0\tau^2\|\zeta\|_{H^3(\Omega)}.
\end{equation}

For $n\ge 7$, let $s=n/2$ and  $\mathscr{V}_+=\mathscr{V}$ and, for $n=5,6$, let $s>4n/(n+2)$ and set
\[
\mathscr{V}_+:=\{ V\in \mathscr{V}\cap L^s(\Omega);\; \|V\|_{L^s(\Omega)}\le \varpi\}.
\]
We assume that $\varpi>0$ is chosen sufficiently large in such a way to guarantee that $\mathscr{V}_+\ne\emptyset$. Also, let
\[
\theta=\theta(n,s):=\frac{sn^2+2sn-4n^2}{s(2n+4)}\in (0,1/2).
\]

Let $V\in \mathscr{V}_+$, $r=2n/(n+2\theta)$. Applying H\"older's inequality, we obtain from \eqref{es1}
\[
\|V\varphi^{\pm}_{\zeta,\tau}\|_{L^r(\Omega)}\le \|V\|_{L^s(\Omega)}\|\varphi^{\pm}_{\zeta,\tau}\|_{L^{p'}(\Omega)}\le \mathbf{c}\|\zeta\|_{H^2(\Omega)}.
\]
Here and in the remaining part of this section, $\mathbf{c}>0$ will denote a generic constant depending only on $n$, $\Omega$,  $\mathbf{g}$, $V_0$, $s$ and $\varpi$.

Let $d\mu$ denotes the Riemannian measure on $\Omega$. From \eqref{1.21} and \eqref{es1}, we obtain
\begin{equation}\label{m1}
\left| \int_\Omega R_V(\lambda)(V\varphi^{\pm}_{a,\tau})V\varphi^{\pm}_{b,\tau}d\mu\right|\le \mathbf{c}\tau^{-\vartheta}\|a\|_{H^2(\Omega)}\|b\|_{H^2(\Omega)},
\end{equation}
where $\vartheta=1-2\theta$.

Let $V_1,V_2\in \mathscr{V}_+$ and set
\[
\varphi_\tau^+:=\varphi^+_{a,\tau},\quad \varphi_\tau^-:=\varphi^-_{b,\tau}.
\]
As in the isotropic case, we establish the following formula
\begin{align*}
&\int_\Omega(V_1-V_2)\varphi_\tau^+\varphi_\tau^-d\sigma=\int_\Gamma(\Lambda_{V_1}(\lambda_\tau)-\Lambda_{V_2}(\lambda_\tau))(\varphi_\tau^+)\varphi_\tau^-d\sigma
\\
&\hskip 3cm+\int_\Omega R_{V_1}(V_1\varphi_\tau^+)V_1\varphi_\tau^-d\mu-\int_\Omega R_{V_2}(V_2\varphi_\tau^+)V_2\varphi_\tau^-d\mu.
\end{align*}
Then  \eqref{m1} implies
\begin{align*}
&\left|\int_\Omega(V_1-V_2)\varphi_\tau^+\varphi_\tau^-d\sigma\right|\le \|(\Lambda_{V_1}(\lambda_\tau)-\Lambda_{V_2}(\lambda_\tau))(\varphi_\tau^+)\|_{L^q(\Gamma)}\|\varphi_\tau^-\|_{L^{q'}(\Gamma)}
\\
&\hskip 7.5cm+\mathbf{c}\tau^{-\vartheta}\|a\|_{H^2(\Omega)}\|b\|_{H^2(\Omega)},
\end{align*}
which, combined with \eqref{es2}, implies
\begin{align}
&\left|\int_\Omega(V_1-V_2)\varphi_\tau^+\varphi_\tau^-d\sigma\right|\le \mathbf{c}\|(\Lambda_{V_1}(\lambda_\tau)-\Lambda_{V_2}(\lambda_\tau))(\varphi_\tau^+)\|_{L^q(\Gamma)}\|b\|_{H^3(\Omega)}\label{m2}
\\
&\hskip 7cm+\mathbf{c}\tau^{-\vartheta}\|a\|_{H^2(\Omega)}\|b\|_{H^2(\Omega)}.\nonumber
\end{align}

Now, the generic constant $\mathbf{c}$ can also depend on $W_0$.

Assume that $(V_1,V_2)\in (\mathscr{V}_+\times \mathscr{V}_+)\cap \mathscr{W}_+$. Then we obtain from \eqref{2.13}
\[
\mathbf{c} \|\Lambda_{V_1}(\lambda_\tau)(\varphi_\tau^+)-\Lambda_{V_1}(\lambda_\tau)(\varphi_\tau^+)\|\le (\tau^{-3} +\tau^{\sigma+n+2}\mathscr{D}(V_1,V_2))\|\varphi_\tau^+\|. 
\]
This and \eqref{es3} give
\begin{equation}\label{m4}
\mathbf{c} \|\Lambda_{V_1}(\lambda_\tau)(\varphi_\tau^+)-\Lambda_{V_1}(\lambda_\tau)(\varphi_\tau^+)\|\le (\tau^{-1} +\tau^{\sigma+n+4}\mathscr{D}(V_1,V_2))\|a\|_{H^3(\Omega)}. 
\end{equation}
Combining \eqref{m2} and \eqref{m4}, we find
\begin{align}
&\mathbf{c}\left|\int_\Omega(V_1-V_2)\varphi_\tau^+\varphi_\tau^-d\sigma\right|\le (\tau^{-1}+ \tau^{\sigma+n+4}\mathscr{D}(V_1,V_2))\|b\|_{H^3(\Omega)}\|a\|_{H^3(\Omega)}\label{m5}
\\
&\hskip 7cm+\tau^{-\vartheta}\|a\|_{H^2(M)}\|b\|_{H^2(\Omega)}.\nonumber
\end{align}

On the other hand, using $L^2$ resolvent estimate, we get
\[
\left|\int_\Omega(V_1-V_2)abd\sigma\right|\le \left|\int_\Omega(V_1-V_2)\varphi_\tau^+\varphi_\tau^-d\sigma\right|+\mathbf{c}\tau^{-1}\|a\|_{H^2(\Omega)}\|b\|_{H^2(\Omega)}.
\]
This inequality in \eqref{m5} gives
\begin{align}
&\mathbf{c}\left|\int_\Omega(V_1-V_2)abd\sigma\right|\le (\tau^{-1}+ \tau^{\sigma+n+4}\mathscr{D}(V_1,V_2))\|b\|_{H^3(\Omega)}\|a\|_{H^3(\Omega)}\label{m6}
\\
&\hskip 7cm+\tau^{-\vartheta}\|a\|_{H^2(M)}\|b\|_{H^2(\Omega)}.\nonumber
\end{align}

Let $\mathbf{I}$ be the geodesic ray transform and $\mathbf{N}=\mathbf{I}^\ast\mathbf{I}:L^2(\Omega)\rightarrow H^1(\Omega_1)$. The properties of the operators $\mathbf{I}$ and $\mathbf{N}$ that we will use below are borrowed from \cite[Section 2]{BD}.

Under the assumption that $V_1-V_2\in H^2(\Omega)$ and
\[
\|V_1-V_2\|_{H^2(\Omega)}\le \varpi,
\]
proceeding as in \cite[Section 5]{BD}, we obtain from \eqref{m6}
\begin{equation}\label{m7.0}
\mathbf{c}\|\mathbf{N}(V_1-V_2)\|_{L^2(\Omega_1)}^2\le \tau^{\sigma+n+4}\mathscr{D}(V_1,V_2)
+\tau^{-\vartheta}.
\end{equation}
where we used that
\[
\|\mathbf{N}(V_1-V_2)\|_{H^{j+1}(\Omega_1)}\le \mathbf{c}\|V_1-V_2\|_{H^j(\Omega)}, \quad j=0,1.
\]
On the other hand, we have
\begin{align*}
\|V_1-V_2\|_{L^2(\Omega)}\le &\mathbf{c}\|\mathbf{N}(V_1-V_2)\|_{H^1(\Omega_1)}
\\
&\le \|\mathbf{N}(V_1-V_2)\|_{L^2(\Omega_1)}^{1/2}\|\mathbf{N}(V_1-V_2)\|_{H^2(\Omega_1)}^{1/2}
\\
&\le \mathbf{c}\|\mathbf{N}(V_1-V_2)\|_{L^2(\Omega_1)}^{1/2}.
\end{align*}
This and \eqref{m7.0} imply
\begin{equation}\label{m7.1}
\mathbf{c}\|V_1-V_2\|_{L^2(\Omega_1)}^4\le \tau^{\sigma+n+4}\mathscr{D}(V_1,V_2)
+\tau^{-\vartheta}.
\end{equation}

Define
\[
\tilde{\mathscr{W}}_+=\{(V_1,V_2)\in (\mathscr{V}_+\times \mathscr{V}_+)\cap\mathscr{W}_+;\; V_1-V_2\in H^2(\Omega),\, \|V_1-V_2\|_{H^2(\Omega)}\le \varpi\}.
\]

Minimizing the right hand side of \eqref{m7.1} with respect to $\tau$, we obtain the following result, where
\[
\tilde{\beta}:=\frac{\vartheta}{4\sigma+4n+16+4\vartheta}.
\]

\begin{theorem}\label{mainthm2}
For all $(V_1,V_2)\in \tilde{\mathscr{W}}_+$  we have
\[
\|V_1-V_2\|_{L^2(\Omega)}\le \mathbf{c}\mathscr{D}(V_1,V_2)^{\tilde{\beta}}.
\]
\end{theorem}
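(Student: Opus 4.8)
The statement follows from the estimate \eqref{m7.1} by optimising the free parameter $\tau$; all the substantial work has already been carried out, so only a short optimisation argument remains. Write $\mathscr{D}:=\mathscr{D}(V_1,V_2)$ and recall that, by \eqref{m7.1}, there is a generic constant $\mathbf{c}>0$ such that
\[
\|V_1-V_2\|_{L^2(\Omega_1)}^4\le \mathbf{c}\left(\tau^{\sigma+n+4}\mathscr{D}+\tau^{-\vartheta}\right),\qquad \tau\ge\tau^\ast .
\]
Since $V_1-V_2$ is viewed as extended by zero outside $\Omega$, we have $\|V_1-V_2\|_{L^2(\Omega)}\le\|V_1-V_2\|_{L^2(\Omega_1)}$, so it suffices to bound the left-hand side above. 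If $\mathscr{D}=0$, letting $\tau\to\infty$ in the displayed inequality forces $V_1=V_2$ and the claimed inequality is trivial; we therefore assume $\mathscr{D}>0$ in what follows.

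Set $\kappa:=\sigma+n+4+\vartheta$, so that $\tilde\beta=\vartheta/(4\kappa)$, and suppose first that $\mathscr{D}\le(\tau^\ast)^{-\kappa}$. Then the choice $\tau_0:=\mathscr{D}^{-1/\kappa}$ satisfies $\tau_0\ge\tau^\ast$, hence is admissible in the displayed inequality, and it balances the two terms on the right-hand side:
\[
\tau_0^{\sigma+n+4}\mathscr{D}=\tau_0^{-\vartheta}=\mathscr{D}^{\vartheta/\kappa}.
\]
It follows that $\|V_1-V_2\|_{L^2(\Omega_1)}^4\le 2\mathbf{c}\,\mathscr{D}^{\vartheta/\kappa}$, and taking fourth roots gives $\|V_1-V_2\|_{L^2(\Omega)}\le\mathbf{c}\,\mathscr{D}^{\tilde\beta}$ after renaming the constant.

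It remains to treat the complementary case $\mathscr{D}>(\tau^\ast)^{-\kappa}$. Here $\mathscr{D}^{\tilde\beta}$ is bounded below by the positive constant $(\tau^\ast)^{-\kappa\tilde\beta}$, which depends only on the fixed data, while the left-hand side $\|V_1-V_2\|_{L^2(\Omega)}$ is bounded above, for instance by $\|V_1-V_2\|_{H^2(\Omega)}\le\varpi$ (or simply by $2\|V_0\|_{L^2(\Omega)}$, which is finite since $n\ge5$). Enlarging $\mathbf{c}$ if necessary, the inequality $\|V_1-V_2\|_{L^2(\Omega)}\le\mathbf{c}\,\mathscr{D}^{\tilde\beta}$ holds in this regime as well, and the proof is complete. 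The only point needing a little care is the admissibility constraint $\tau\ge\tau^\ast$ inherited from the resolvent lemmas of Section~2, which is exactly what forces the above case distinction; beyond this bookkeeping there is no genuine obstacle.
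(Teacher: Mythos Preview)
Your proof is correct and follows exactly the paper's approach: the paper simply says ``minimizing the right-hand side of \eqref{m7.1} with respect to $\tau$'' yields the theorem, and you have carried out this optimisation in detail, including the bookkeeping with the admissibility constraint $\tau\ge\tau^\ast$. The case distinction you introduce is the standard way to make this minimisation rigorous, and nothing beyond that is required.
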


\appendix

\section{Coercivity of the sesquilinear form $\mathfrak{a}_V$}\label{app1}

Before proving the coercivity of $\mathfrak{a}_V$, we establish the following preliminary lemma. We recall for convenience that
\begin{align*}
& \mathbf{e}=\sup\left\{\|w\|_{L^{p'}(\Omega)};\; u\in H^1(\Omega), \; \|u\|_{H^1(\Omega)}=1\right\}.
\\
&\mathbf{p}=\sup \left\{ \|u\|_{H^1(\Omega )};\; u\in H_0^1(\Omega),\; \|\nabla u\|_{L^2(\Omega )}=1\right\}.
\end{align*}

\begin{lemma}\label{lem1}
Let $V\in L^{n/2}(\Omega )$ and denote by $\mathfrak{b}_V$ the sesquilinear  form defined by
\[
\mathfrak{b}_V(u,v)=\int_\Omega Vu\overline{v}dx,\quad u,v\in H^1(\Omega ).
\]
Then $\mathfrak{b}$ is bounded with
\begin{equation}\label{1.1}
|\mathfrak{b}_V(u,v)|\le\mathbf{e}^2 \|V\|_{L^{n/2}(\Omega )}\|u\|_{H^1(\Omega )}\|v\|_{H^1(\Omega )},\quad u,v\in H^1(\Omega ).
\end{equation}
Moreover, for all $u\in H^1(\Omega)$, $\ell_V(u):v\in H_0^1(\Omega )\mapsto \mathfrak{b}_V(u,v)$ belongs to $H^{-1}(\Omega )$ with
\begin{equation}\label{1.2}
\|\ell_V(u)\|_{H^{-1}(\Omega )}\le \mathbf{e}^2 \|V\|_{L^{n/2}(\Omega )}\|u\|_{H^1(\Omega )}.
\end{equation}
\end{lemma}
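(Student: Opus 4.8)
The plan is to reduce the whole statement to H\"older's inequality together with the Sobolev embedding $H^1(\Omega)\hookrightarrow L^{p'}(\Omega)$ that is encoded in the constant $\mathbf{e}$. First I would record the exponent arithmetic: since $p'=2n/(n-2)$ one has $\tfrac{2}{n}+\tfrac{1}{p'}+\tfrac{1}{p'}=\tfrac{2}{n}+\tfrac{n-2}{n}=1$, so that $n/2,\,p',\,p'$ is an admissible triple of H\"older exponents. Applying H\"older's inequality with these exponents to $\int_\Omega |V|\,|u|\,|v|\,dx$ gives
\[
|\mathfrak{b}_V(u,v)|\le \|V\|_{L^{n/2}(\Omega)}\|u\|_{L^{p'}(\Omega)}\|v\|_{L^{p'}(\Omega)},\qquad u,v\in H^1(\Omega),
\]
and then the defining property $\|w\|_{L^{p'}(\Omega)}\le \mathbf{e}\|w\|_{H^1(\Omega)}$, used for both $u$ and $v$, yields exactly \eqref{1.1}; in particular $\mathfrak{b}_V$ is bounded on $H^1(\Omega)\times H^1(\Omega)$.

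For the second assertion I would simply restrict \eqref{1.1} to $v\in H_0^1(\Omega)$. The map $\ell_V(u):v\mapsto \mathfrak{b}_V(u,v)$ is antilinear on $H_0^1(\Omega)$, and \eqref{1.1} shows $|\ell_V(u)(v)|\le \mathbf{e}^2\|V\|_{L^{n/2}(\Omega)}\|u\|_{H^1(\Omega)}\|v\|_{H^1(\Omega)}$ for all such $v$; hence $\ell_V(u)$ is a bounded conjugate-linear functional on $H_0^1(\Omega)$, i.e. $\ell_V(u)\in H^{-1}(\Omega)$, and passing to the supremum over $v\in H_0^1(\Omega)$ with $\|v\|_{H^1(\Omega)}\le 1$ gives \eqref{1.2}.

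There is no serious obstacle here: the only points that deserve attention are the exponent bookkeeping above and the normalisation convention for the duality. The constant $\mathbf{e}^2$ appearing in \eqref{1.2} corresponds to measuring $H^{-1}(\Omega)$ as the antidual of $H_0^1(\Omega)$ equipped with the full $H^1$-norm; if one prefers the $\|\nabla\cdot\|_{L^2(\Omega)}$ normalisation, one can keep the $L^{p'}$ estimate on the factor $v$ to the very end and bound $\|v\|_{L^{p'}(\Omega)}$ directly by the embedding constant, which again produces the constant $\mathbf{e}^2$ without introducing the Poincar\'e constant $\mathbf{p}$.
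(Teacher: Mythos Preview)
Your proof is correct and follows essentially the same route as the paper: H\"older's inequality combined with the Sobolev embedding $H^1(\Omega)\hookrightarrow L^{p'}(\Omega)$, then restriction to $H_0^1(\Omega)$ for the dual estimate. The only cosmetic difference is that the paper first applies Cauchy--Schwarz to get $|\mathfrak{b}_V(u,v)|\le \|Vu^2\|_{L^1}^{1/2}\|Vv^2\|_{L^1}^{1/2}$ and then a two-term H\"older on each factor, whereas you apply the three-term H\"older inequality with exponents $(n/2,p',p')$ directly; both give the identical bound with constant $\mathbf{e}^2$.
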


\begin{proof}
Let $u,v\in H^1(\Omega )$. By applying Cauchy-Schwarz's inequality, we get 
\begin{equation}\label{1.3}
|\mathfrak{b}_V(u,v)|\le \|Vu^2\|_{L^1(\Omega )}^{1/2}\|Vv^2\|_{L^1(\Omega )}^{1/2}.
\end{equation}
It follows from H\"older's inequality 
\begin{equation}\label{1.4}
\|Vw^2\|_{L^1(\Omega )}^{1/2}\le \|V\|_{L^{n/2}(\Omega)}^{1/2}\|w\|_{L^{p'}(\Omega )}\le \mathbf{e} \|V\|_{L^{n/2}(\Omega)}^{1/2}\|w\|_{H^1(\Omega )},\quad w\in\{u,v\}.
\end{equation}
Then \eqref{1.4}  in \eqref{1.3} yields \eqref{1.1}.

Let $u\in H^1(\Omega )$. From \eqref{1.1}, we obtain
\[
|\ell_V(u)(v)|\le \mathbf{e}^2 \|V\|_{L^{n/2}(\Omega )}\|u\|_{H^1(\Omega )}\|v\|_{H_0^1(\Omega )},\quad v\in H_0^1(\Omega ).
\]
Therefore,  $\ell_V(u)\in H^{-1}(\Omega )$ and \eqref{1.2} holds. 
\end{proof}

Let $V\in L^{n/2}(\Omega,\mathbb{R} )$ and recall that $\mathfrak{a}_V$ is given by
\begin{align*}
\mathfrak{a}_V(u,v)&=\int_\Omega \left(\nabla u\cdot \nabla\overline{ v}+Vu\overline{v}\right)dx
\\
&=\int_\Omega \nabla u\cdot \nabla\overline{v}dx+\mathfrak{b}_V(u,v), \quad u,v\in H_0^1(\Omega).
\end{align*}

In light of Lemma \ref{lem1}, $\mathfrak{a}_V$ is bounded with
\[
|\mathfrak{a}_V(u,v)|\le \left(1+\|V\|_{L^{n/2}(\Omega )}\right)\|u\|_{H^1(\Omega )}\|v\|_{H^1(\Omega )}.
\]
As $C_0^\infty (\Omega )$ is dense in $L^{n/2}(\Omega )$, we find $\tilde{V}\in C_0^\infty(\Omega )$ so that 
\[
\|V-\tilde{V}\|_{L^{n/2}(\Omega )}\le \frac{1}{2\mathbf{p}^2 \mathbf{e}^2}.
\]
Therefore, we have for $u\in H_0^1(\Omega)$
\begin{align*}
\mathfrak{a}_V(u,u)&\ge \mathbf{p}^{-2} \|u\|_{H^1(\Omega)}^2-\|\tilde{V}\|_{L^\infty (\Omega )}\|u\|_{L^2(\Omega )}^2
\\
&\hskip 3cm -\|V-\tilde{V}\|_{L^{n/2}(\Omega )}\|u\|_{L^{p'}(\Omega)}^2
\\
&\ge \frac{\mathbf{p}^{-2}}{2}\|u\|_{H^1(\Omega)}^2-\|\tilde{V}\|_{L^\infty (\Omega )}\|u\|_{L^2(\Omega )}^2.
\end{align*}
This shows that $\mathfrak{a}_V$ is coercive.

\section{BVPs with unbounded lower order term}\label{app2}

We will use the following lemma, the proof of which is quite similar to that of Lemma \ref{lem4}.

\begin{lemma}\label{lemB4}
Let $V\in  \mathscr{V}$, $\lambda\in \rho(A_V)$. Then
\begin{align}
&\|R_V(\lambda)f\|_{L^2(\Omega)}\le \frac{1}{\mathrm{dist}(\lambda,\sigma(A_V))} \|f\|_{L^2(\Omega)}, \quad f\in L^2(\Omega), \label{B7}
\\
&\|R_V(\lambda)f\|_{H_0^1(\Omega)}\le \mathbf{c}_\lambda \|f\|_{L^2(\Omega)},\quad f\in L^2(\Omega),\label{B8}
\\
&\|R_V(\lambda)f\|_{H_0^1(\Omega)}\le \mathbf{c}_\lambda \|f\|_{L^p(\Omega)},\quad \quad f\in L^p(\Omega).\label{B9}
\end{align}
Here, $\mathbf{c}_\lambda=\mathbf{c}_\lambda (n,\Omega,V,\lambda)>0$ is a constant.
\end{lemma}

We consider the  BVP
\begin{equation}\label{1.7}
\left\{
\begin{array}{ll}
(-\Delta +V- \lambda)u=f\quad &\mbox{in}\; \Omega,
\\
u=\varphi &\mbox{on}\; \Gamma .
\end{array}
\right.
\end{equation}

\begin{theorem}\label{thm2}
Let $V\in L^{n/2}(\Omega,\mathbb{R} )$ and $\lambda \in \rho (A_V)$. For all $f\in L^p(\Omega)$ and $\varphi\in W^{2-1/p,p}(\Gamma )$ the BVP \eqref{1.7} has a unique solution $u=u_V(\lambda)(\varphi ,f)\in W^{2,p}(\Omega )$. In addition, there exists a constant $\mathbf{c}_\lambda=\mathbf{c}_\lambda (n,\Omega,V,\lambda)>0$ such that
\begin{equation}\label{1.8}
\|u_V(\lambda)(\varphi,f)\|_{W^{2,p}(\Omega )}\le \mathbf{c}_\lambda\left(\|f\|_{L^p(\Omega)}+\|\varphi\|_{W^{2-1/p,p}(\Gamma )}\right).
\end{equation}
\end{theorem}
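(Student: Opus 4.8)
The plan is to reduce \eqref{1.7} to a problem with homogeneous boundary data and then extract well-posedness from the Fredholm alternative for $-\Delta+V-\lambda$ on $X:=W^{2,q}(\Omega)\cap W_0^{1,q}(\Omega)$; the one delicate point is injectivity. First, using a bounded right inverse of the trace $\gamma_0\colon W^{2,q}(\Omega)\to W^{2-1/q,q}(\Gamma)$ (see \cite{Gr}), pick $\Phi\in W^{2,q}(\Omega)$ with $\gamma_0\Phi=\varphi$ and $\|\Phi\|_{W^{2,q}(\Omega)}\le\mathbf{c}\|\varphi\|_{W^{2-1/q,q}(\Gamma)}$. Because $W^{2,q}(\Omega)\hookrightarrow L^2(\Omega)$ — a borderline Sobolev embedding which holds since $n\ge5$ — and $V\in L^{n/2}(\Omega)$, H\"older's inequality gives $(V-\lambda)\Phi\in L^q(\Omega)$, hence $\tilde f:=f-(-\Delta+V-\lambda)\Phi\in L^q(\Omega)$ with $\|\tilde f\|_{L^q(\Omega)}\le\mathbf{c}\big(\|f\|_{L^q(\Omega)}+\|\varphi\|_{W^{2-1/q,q}(\Gamma)}\big)$. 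It then suffices to solve, uniquely in $X$, the problem $(-\Delta+V-\lambda)w=\tilde f$ in $\Omega$, $w=0$ on $\Gamma$, and to set $u:=w+\Phi$.

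For the Fredholm property I would fix $\mu\ge\mu_0$ and note that \eqref{ae1}, together with solvability for $L^2$-data and a density argument, makes $-\Delta+\mu\colon X\to L^q(\Omega)$ an isomorphism. Writing $-\Delta+V-\lambda=(-\Delta+\mu)+(V-\lambda-\mu)$, it remains to check that multiplication by $V-\lambda-\mu$ is compact from $X$ to $L^q(\Omega)$. The constant part is compact because the embedding $W^{2,q}(\Omega)\hookrightarrow L^q(\Omega)$ is compact (Rellich, subcritical as $q<2$). Multiplication by $V$ is compact because it is the operator-norm limit of multiplications by $V_k\in C_0^\infty(\Omega)$ approximating $V$ in $L^{n/2}(\Omega)$: each $V_k\,\cdot\,$ factors through the compact embedding $W^{2,q}(\Omega)\hookrightarrow L^s(\Omega)$ with $q<s<2$, while $\|(V-V_k)w\|_{L^q(\Omega)}\le\|V-V_k\|_{L^{n/2}(\Omega)}\|w\|_{L^2(\Omega)}\le\mathbf{c}\|V-V_k\|_{L^{n/2}(\Omega)}\|w\|_{W^{2,q}(\Omega)}$. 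Hence $-\Delta+V-\lambda\colon X\to L^q(\Omega)$ is Fredholm of index $0$, and well-posedness will follow once its injectivity is established.

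The injectivity is the heart of the matter. Let $w\in X$ with $(-\Delta+V-\lambda)w=0$. Since $W^{2,q}(\Omega)\not\hookrightarrow W^{1,2}(\Omega)$ when $n\ge5$, $w$ need not lie in $H^1(\Omega)$, so $\lambda\in\rho(A_V)$ cannot be used directly; I would recover the energy regularity by a smallness splitting. Write $V=V'+V''$ with $V'\in C_0^\infty(\Omega)$ and $\|V''\|_{L^{n/2}(\Omega)}$ small enough that $S\colon u\mapsto(-\Delta)^{-1}(-V''u)$ — with $(-\Delta)^{-1}$ the homogeneous Dirichlet inverse — is a strict contraction simultaneously on $X$ (using $\|Su\|_{W^{2,q}(\Omega)}\le\mathbf{c}\|V''\|_{L^{n/2}(\Omega)}\|u\|_{L^2(\Omega)}$) and on $H^2(\Omega)\cap H_0^1(\Omega)$ (using $\|Su\|_{H^2(\Omega)}\le\mathbf{c}\|V''\|_{L^{n/2}(\Omega)}\|u\|_{H^2(\Omega)}$, which relies on the borderline embedding $H^2(\Omega)\hookrightarrow L^{2n/(n-4)}(\Omega)$, valid for $n\ge5$); the threshold on $\|V''\|_{L^{n/2}(\Omega)}$ depends only on $n$ and $\Omega$. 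From the equation, $(I-S)w=(-\Delta)^{-1}\big((\lambda-V')w\big)$, and the right-hand side lies in $H^2(\Omega)\cap H_0^1(\Omega)$ because $(\lambda-V')w\in L^2(\Omega)$. Expanding $(I-S)^{-1}=\sum_{k\ge0}S^k$ in $H^2(\Omega)\cap H_0^1(\Omega)$ — and noting that this series also converges to $w$ in $X$ — gives $w\in H^2(\Omega)\cap H_0^1(\Omega)\subset H_0^1(\Omega)$. Now testing the equation against $\phi\in H_0^1(\Omega)$, the term $\int_\Omega Vw\overline\phi$ being controlled by $\|V\|_{L^{n/2}(\Omega)}\|w\|_{L^{p'}(\Omega)}\|\phi\|_{L^{p'}(\Omega)}$ via the H\"older inequality $2/n+2/p'=1$, yields $(A_V-\lambda)w=0$, hence $w=0$.

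Being Fredholm of index $0$ and injective, $-\Delta+V-\lambda\colon X\to L^q(\Omega)$ is an isomorphism; thus $w:=(-\Delta+V-\lambda)^{-1}\tilde f\in X$ is the unique solution of the homogeneous-boundary problem, $u:=w+\Phi\in W^{2,q}(\Omega)$ solves \eqref{1.7} with $u=\varphi$ on $\Gamma$, uniqueness of $u$ follows by applying the injectivity to the difference of two solutions, and $\|u\|_{W^{2,q}(\Omega)}\le\|(-\Delta+V-\lambda)^{-1}\|\,\|\tilde f\|_{L^q(\Omega)}+\|\Phi\|_{W^{2,q}(\Omega)}$ gives \eqref{1.8} with $\varkappa=\varkappa(n,\Omega,V,\lambda)$. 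The main obstacle is the injectivity step: the criticality of the exponent $n/2$ prevents an immediate appeal to $\lambda\in\rho(A_V)$, and the smallness-splitting/Neumann-series device used to circumvent it is precisely where the restriction $n\ge5$ is used, through the two borderline embeddings $W^{2,q}(\Omega)\hookrightarrow L^2(\Omega)$ and $H^2(\Omega)\hookrightarrow L^{2n/(n-4)}(\Omega)$.
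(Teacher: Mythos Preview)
Your argument is correct and takes a genuinely different route from the paper. The paper does not invoke Fredholm theory: for existence it approximates the $L^q$ right-hand side $G$ by a sequence $G_k\in L^2(\Omega)$, applies the Hilbert-space resolvent $v_k=R_V(\lambda)G_k\in H_0^1(\Omega)$, bootstraps each $v_k$ into $W^{2,q}(\Omega)$ with a uniform bound via the a priori estimate \eqref{ae1}, and passes to a weak $W^{2,q}$ limit; uniqueness is then dispatched in one line as ``follows from the fact that $\lambda$ is not an eigenvalue of $A_V$''. Your approach instead shows that $-\Delta+V-\lambda\colon X\to L^q(\Omega)$ is a compact perturbation of an isomorphism and reduces everything to injectivity. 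The paper's route is shorter and sidesteps the compactness discussion for the multiplication operator, but its uniqueness sentence implicitly presupposes precisely the regularity step $W^{2,q}\cap W_0^{1,q}\Rightarrow H_0^1$ that you establish carefully with the smallness splitting $V=V'+V''$ and the simultaneous Neumann series on $X$ and on $H^2\cap H_0^1$; your injectivity argument therefore makes explicit the one nontrivial point the paper leaves unargued.
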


\begin{proof}
In this proof, $\mathbf{c}_0=\mathbf{c}_0(n,\Omega)>0$, $\mathbf{c}=\mathbf{c}(n,\Omega,V)>0$ and $\mathbf{c}_\lambda=\mathbf{c}_\lambda (n,\Omega,V,\lambda)$ are generic constants.

Let $F\in W^{2,p}(\Omega )$ so that $\gamma_0F=\varphi$ and
\[
\|F\|_{W^{2,p}(\Omega )}\le \mathbf{c}_0 \|\varphi\|_{W^{2-1/p,p}(\Gamma )}.
\]
If  $G=-(-\Delta +V- \lambda)F+f$ then $G\in L^p(\Omega )$ and
\begin{equation}\label{1.9}
\|G\|_{L^p(\Omega )}\le \mathbf{c}_\lambda\left(\|f\|_{L^p(\Omega)}+\|\varphi\|_{W^{2-1/p,p}(\Gamma )}\right).
\end{equation}
It follows from \eqref{B9} that $v=R_V(\lambda )f\in H_0^1(\Omega)$ and 
\begin{equation}\label{1.10}
\|v\|_{H_0^1(\Omega)}\le \mathbf{c}_\lambda \|G\|_{L^p(\Omega)}.
\end{equation}
Furthermore, we have
\[
-\Delta v= -Vv+\lambda v+G \in L^p(\Omega ).
\]

By \cite[Theorem 2.4.2.6, page 241]{Gr}, we obtain $v\in W^{2,p}(\Omega )$. Then we proceed as in the proof of Lemma \ref{lemres} to obtain
\[
\|v\|_{W^{2,p}(\Omega )}\le \mathbf{c}\|(-\Delta +\lambda_0)v\|_{L^p(\Omega )},
\]
where $\lambda_0=\lambda_0(n,\Omega)>0$ is a constant, and consequently
\begin{align*}
\|v\|_{W^{2,q}(\Omega )}&\le \mathbf{c}_0\|-Vv+(\lambda+\lambda_0) v+G\|_{L^p(\Omega )}
\\
&\le \mathbf{c}_0\left(  \|V_0\|_{L^{n/2}(\Omega )}\|v\|_{L^{p'}(\Omega )}+|\lambda +\lambda_0|\|v\|_{L^p(\Omega )}+\|G\|_{L^p(\Omega )}\right).
\end{align*}
The last inequality, combined with \eqref{1.10}, gives
\[
\|v\|_{W^{2,p}(\Omega )}\le \mathbf{c}_\lambda\|G\|_{L^p(\Omega )}.
\]
That is, we have
\begin{equation}\label{1.11}
\|v\|_{W^{2,p}(\Omega )}\le \mathbf{c}_\lambda(\|f\|_{L^p(\Omega)}+\|\varphi\|_{W^{2-1/p,p}(\Gamma )}).
\end{equation}

We verify that $u=F+v\in W^{2,p}(\Omega )$ is a solution of the BVP \eqref{1.7} and inequality \eqref{1.8}  follows from \eqref{1.11}. This solution is unique because $\lambda$ is not an eigenvalue of $ A_V$. 
\end{proof}

\end{document}